\newtheorem{Theorem}{Theorem}[section]
\newtheorem{Lemma}{Lemma}[section]
\newtheorem{Proposition}{Proposition}[section]
\theoremstyle{definition}
\newtheorem{Definition}{Definition}[section]
\theoremstyle{remark}
\newtheorem{Remark}{Remark}[section]
\numberwithin{equation}{section}
\renewcommand{\r}{\rho}
\renewcommand{\u}{{\bf u}}
\newcommand{\R}{{\mathbb R}}
\newcommand{\Dv}{{\nabla\cdot\,}}
\newcommand{\x}{{\bf x}}
\def\f{\frac}
\renewcommand{\O}{\Omega}
\def\D{\Delta }
\def\hf1{^\f{1}{1-\xi^2}}
\def\be{\begin{equation}}
\def\en{\end{equation}}
\def\bs{\begin{split}}
\def\es{\end{split}}
\renewcommand{\d}{{\bf d}}
\newcommand{\F}{{\bf d}}
\renewcommand{\a}{\alpha}
\author{Xiaoli Li and\ Dehua Wang}
\address{Institute of Applied Physics and Computational Mathematics,
                           Beijing 100088, China.}
\email{xllithu@gmail.com}
\address{Department of Mathematics, University of Pittsburgh,
                           Pittsburgh, PA 15260, USA.}
\email{dwang@math.pitt.edu}
\title[Global Solution to the Flow of Liquid Crystals]
{Global strong Solution to the  Density-Dependent Incompressible Flow of
Liquid  Crystals}
\keywords{Liquid crystals, incompressible flow, density-dependent, global
strong solution, existence and uniqueness.}
\thanks{2010  {\em Mathematics Subject Classification.}
Primary:  35A01, 76A15, 76D03;  Secondary: 35A09, 76A02, 76D05, 82D30.}
\date{January 2, 2012}%\today}
\begin{document}
\begin{abstract}
The initial-boundary value problem for the density-dependent incompressible flow of liquid crystals is studied
in a three-dimensional bounded smooth domain.
%The system consists of a coupled system of incompressible inhomogeneous Navier-Stokes equations and various
%kinematic transport equations for the molecular orientations.
For the initial density  away from vacuum, the existence and
uniqueness is established for both the local strong solution with
large initial data and the global strong solution with small data.
It is also proved that when the strong solution exists, a
weak solution with the same data must be equal to the unique strong
solution.
\end{abstract}

\maketitle

\section{Introduction}

\iffalse%%%%%
Liquid crystals are a state of matter that have properties between
those of a conventional liquid and those of a solid crystal that are
optically anisotropic, even when they are at rest. In this work, we
are interested in a Navier-Stokes type model for density-dependent
viscous incompressible fluids that takes into account the
crystallinity of the fluid molecules in 3-D case, that is, a nematic
liquid crystal model, which can be governed by the following
nonlinear hydrodynamical system
\fi%%%%

Liquid crystals are substances that exhibit a phase of matter that has properties between those of a conventional liquid and those of a solid crystal. A liquid crystal  may flow like a liquid, but its molecules may be oriented in a crystal-like way. There are many different types of liquid crystal phases, which can be distinguished based on their different optical properties.
One of the most common liquid crystal phases is the nematic, where the molecules have no positional order, but they have long-range orientational order.
The three-dimensional density-dependent incompressible flow of nematic liquid crystals can be governed by the following system of partial differential equations
(\cite{Chan,JLE,Leslie1,LL}):
\begin{subequations}\label{e2}
\begin{align}
&\partial_t\r+\nabla\cdot(\r\u)=0, \label{e21}\\
&\partial_t(\r\u)+\nabla\cdot(\r\u\otimes\u)+\nabla P
       =\mu\triangle\u-\lambda\nabla\cdot\left(\nabla \d\odot\nabla\d\right), \label{e22}\\
&\partial_t\d+\u\cdot\nabla\d=\gamma\left(\D\d+|\nabla\d|^2\d\right), \label{e23}\\
&\nabla\cdot\u=0, \label{e24}
\end{align}
\end{subequations}
where $\rho$ denotes the density, $\u\in\R^3$   the velocity, $\d\in \mathbb{S}^2$ (the unit sphere in $\R^3$)   the unit-vector field that represents the  macroscopic molecular orientations,
 $P\in\R$   the pressure (including both the hydrostatic part and the induced elastic part from the orientation field arising from
  the incompressibility $\nabla\cdot\u=0$);    they all depend on the spatial variable  $\x \in\R^3$ and the time variable $t>0$.
 %$\sigma=PI_{3\times3}+\mu\nabla\u$ is the fluid viscosity part of the stress tensor.
 The positive constants $\mu, \lambda, \gamma$ stand for viscosity, the competition between kinetic energy and potential energy,
 and microscopic elastic relaxation time or the Deborah number for the molecular orientation field, respectively.
 The term $\lambda\nabla\cdot\left(\nabla  \d\odot\nabla\d\right)$ in the stress tensor represents the
 anisotropic feature of the system.
  We set $\mu=\lambda=\gamma=1$ since their exact values do not play any role in our analysis.
The symbol $\nabla\d\odot\nabla\d$ denotes a matrix whose $(i,j)$-th
entry is $\partial_{x_i} \d\cdot\partial_{x_j}\d$ for $1\leq i,j\leq
3$, and it is easy to see that $\nabla\d\odot\nabla\d=(\nabla
\d)^\top\nabla\d,$ where $(\nabla \d)^\top$ denotes the transpose of
the $3\times 3$ matrix $\nabla \d$.
System \eqref{e2} is a simplified version, but still retains most of
the interesting mathematical properties (without destroying the
basic nonlinear structure) of the original Ericksen-Leslie model
(\cite{Eri, Eri2, HK, HKL, Leslie1, Lin2}) for the hydrodynamics of
nematic liquid crystals; see \cite{DLWW,DWW,LL,LW,SL} for more
discussions on the relations of the two models. Both the
Ericksen-Leslie system and the simplified one \eqref{e2} describe
the macroscopic continuum time evolution of liquid crystal materials
under the influence of both the velocity and the orientation of
crystals  which can be derived from the averaging/coarse
graining of the directions of rod-like liquid crystal molecules. In
particular, there is a force term in the $\u$-system \eqref{e22}
depending on $\d$; the left-hand side of the $\d$-system \eqref{e23}
stands for the kinematic transport by the flow field,  while the
right-hand side represents the internal relaxation due to the
elastic energy. In many situations, the flow velocity field does
disturb the alignment of   molecules, and in turn a change in the
alignment will induce velocity.

We consider the initial-boundary value problem of system \eqref{e2}
in a bounded smooth domain $\O\subset\R^3$ with the initial condition:
\begin{equation}\label{ic}
(\r,\u, \d)\mid_{t=0}=(\r_0,\u_0,\d_0), \quad  \x\in  {\Omega},
\end{equation}
  and the boundary condition:
\begin{equation}\label{bc}
(\u,\partial_{\bf \nu}\d)\mid_{\partial\Omega}=(0,0), % \quad t>0.
\end{equation}
where $\rho_0: \O\to\R^+$, $\u_0: \O\to\R^3$, $\d_0: \O\rightarrow \mathbb{S}^2$ are given with
compatibility,   and  ${\bf \nu}$ denotes
the outer unit-normal vector field on $\partial\O$.
The boundary condition implies  non-slip on the boundary
and no contribution to the surface forces  from the director field $\d$.
% for the orientation vector $\d$, homogeneous Neumann
%boundary condition (which represents the fact that there is no
%contribution to the surface forces, $\sigma\cdot\nu$, from the
%director field d) is considered here.
%
Roughly speaking,  \eqref{e2} is a coupling between the
incompressible inhomogeneous Navier-Stokes equations and the
transported flow of harmonic maps. In the homogeneous case $\r\equiv
1$, \eqref{e2} becomes the hydrodynamic flow system of
incompressible liquid crystals. In a series of papers \cite{LL}-\cite{LL4},  Lin and Liu
 addressed both the regularity and existence of global weak
solutions to the Leslie system of variable length, i.e.  when the Dirichlet energy
$$\frac{1}{2}\int_\O|\nabla\d|^2 d\x, \qquad \d: \O\rightarrow \mathbb{S}^{2},$$
 is replaced by the Ginzburg-Landau energy
$$\int_\O(\frac{1}{2}|\nabla\d|^2+\frac{(1-|\d|^2)^2}{4\varepsilon^2}) d\x \ (\varepsilon>0), \qquad \d: \O\rightarrow \R^3.$$
In particular, for any fixed $\varepsilon>0$, they \cite{LL} proved the global existence of
weak solutions with large initial data under the assumption that
$\u_0\in L^2(\O), \d_0\in H^1(\O)$ with
$\d_0\mid_{\partial\Omega}\in H^{\frac{3}{2}}(\partial\Omega)$ in
the two-dimensional and three-dimensional cases.  The existence and uniqueness of global classical
solution was also obtained if $\u_0\in H^1(\O), \d_0\in H^2(\O)$   when the fluid viscosity $\mu$ is large enough.
%However, the estimates and arguments there depend on $\varepsilon$, and it is a challenging problem to study the convergence as $\varepsilon\downarrow 0$.  The similar results were obtained also in \cite{SL} for a different but similar model.
The partial regularity   of the weak solution was investigated in \cite{LL2} (and also \cite{CGR,HKL,LW}), similar
to the classical theorem by Caffarelli, Kohn, and Nirenburgh
\cite{CKN} on the Navier-Stokes equations that asserts the one-dimensional
parabolic Hausdorff measure of the singular set of any suitable weak solution is zero.
% It is also an open problem to ask whether there exists a global weak solution to the incompressible hydrodynamic
%flow equation similar to the Leray-Hopf type solutions in the context of Navier-Stokes equation.
With the Ginzburg-Landau penalty function, the global strong and weak solutions to  the compressible flow of liquid crystals were obtained in \cite{LLa,LLH,LLQ,WY1}.
See also \cite{CS1,HW6,JT,LW,SL} for some related discussions.
For the incompressible version of system \eqref{e2} with constant density, Lin-Lin-Wang
\cite{LLW}  established the existence of global weak solutions that are smooth away from at most
finitely many singular times in any bounded smooth domain of $\R^2$,
and we \cite{LXWD} proved the global existence of strong solution  in a bounded smooth domain of $\R^3$.
For the compressible version of system \eqref{e2}, the one-dimensional classical solution was obtained  in \cite{DLWW,DWW,QH},
%the existence of a weak solution was obtained in \cite{DLWW} under the assumption that the initial density function $\r_0\in H^1$ has a positive lower bound (also in \cite{DWW} under the weaker assumption that $0\leq \r_0\in L^\chi$ for $\chi>1$), $\u_0\in L^2$ and $\d_0\in H^1$.
and the blowup criteria of strong solutions  were studied in \cite{HWW1, HWW2}.

In this paper,  we are interested in the existence and uniqueness of
global strong solution $(\r, \u, P, \d) $ of \eqref{e2} in
$W^{1,r}(\O)\times W^{2,q}(\O)^3\times
W^{1,q}(\O)\times W^{3,q}(\O)^3$ with $3<q\leq r\leq \infty$ while assuming in addition
that the initial density is bounded away from zero. By a
$\textit{strong solution}$, we mean a quadruplet $(\r, \u, P, \d)$
satisfying \eqref{e2} almost everywhere with the initial-boundary
conditions \eqref{ic}-\eqref{bc}. Our strategy is to consider the
following auxiliary problem:
\begin{equation*}
\begin{cases}
\partial_t\r+{\bf v}\cdot\nabla\r=0,\\
\r\partial_t\u-\D\u+\nabla
P=-\r{\bf v}\cdot\nabla{\bf v}-\nabla\cdot\big((\nabla{\bf f})^\top\nabla{\bf f}\big), \\
\partial_t\d-\D\d=-{\bf v}\cdot\nabla{\bf f}+|\nabla{\bf f}|^2{\bf f},\\
\nabla\cdot\u=0,
\end{cases}
\end{equation*}
for some given ${\bf v}\in\R^3$ and ${\bf f}\in\R^3$. One of the
motivations for  such a strategy is that the continuity
equation \eqref{e21} is the transport equation of $\r$, \eqref{e22}
is the evolutionary density-dependent incompressible Navier-Stokes
equation with the source term $-\lambda\nabla\cdot\left(\nabla
\d\odot\nabla\d\right)$, while \eqref{e23} is the parabolic system
in terms of $\d$, therefore we can use a
 result of the transport equation (cf. Proposition
\ref{P1}), the maximal regularities of the parabolic equations (cf.
Theorem \ref{T3}) and density-dependent Stokes equations (cf.
Theorem \ref{T4}). We first use an iteration method to establish the
local existence and uniqueness of strong solution with general
initial data. Then we prove the global existence by establishing
some global estimates under the condition that the initial data are
small in some sense. As system \eqref{e2} contains the Navier-Stokes
equations as a subsystem, one cannot expect  in general  any better
results than those for the Navier-Stokes equations. The uniqueness
of global weak solution is always an open problem. We shall prove
that when the strong solution exists, all the global weak solutions
must be equal to the unique strong solution, which is called the
weak-strong uniqueness. Similar results were obtained by Danchin
\cite{D} for the density-dependent incompressible viscous fluids in
a bounded domain of $\R^2$ with $C^{2+\varepsilon}$ boundary.
%In fact, when the orientation vector is a constant unit vector, \eqref{e2} reduces to the density-dependent incompressible Navier-Stokes equation.
We shall establish our results in the
spirit of \cite{D}, while developing new estimates for the crystal
orientation field. Due to the particular structure of the equations
for the velocity, especially the strongly nonlinear term
$(\nabla\d)^\top\triangle\d$ in the $\u$-system, it will be
necessary to obtain more regularities for the crystal orientation
field. By developing more novel  and subtle estimates, we will be able to finally establish the global existence of strong solution and weak-strong uniqueness for the initial-boundary value problem  \eqref{e2}-\eqref{bc} of the density-dependent incompressible flow of liquid crystals. The results of this paper generalize our early results in \cite{LXWD} for the incompressible case with constant density. The analysis in this paper is much more difficult and complicated than that in \cite{LXWD} due to the appearance of non-constant density.

The rest of the  paper is organized as follows. In Section 2, we
state our main results on local and global existence of strong
solution, as well as the weak-strong uniqueness. In Section 3, we
recall a standard result for the transport equation, the maximal
regularities for the non-homogeneous non-stationary Stokes operator
and the parabolic operator, and also some $L^\infty$ estimates in
the spatial variable. In Section 4, we give the proof of the local
existence. In Section 5, we prove the global existence. Finally in
Section 6, we show the weak-strong uniqueness.

\bigskip

%%%%%%%%%%%%%%%%
\section{Main Results}

In this section, we state our main results. If $k>0$ is an integer
and $p\ge 1$, we denote by $W^{k,p}$ the set of functions in
$L^p(\O)$ whose derivatives of up to order $k$ belong to $L^p(\O)$.
For $T>0$ and a function space $X$, denote by $L^p(0,T; X)$ the set
of Bochner measurable $X$-valued time dependent functions $f$ such
that $t\rightarrow \|f\|_{X}$ belongs to $L^p(0,T)$, and the
corresponding Lebesgue norm is denoted by $\|\cdot\|_{L^p_T(X)}$.
We will consider the solutions in the functional spaces defined below.

\begin{Definition}\label{df1}
For $T>0$ and $1<p, q,r<\infty$, we denote by $M^{p,q,r}_T$ the set
of quadruplets $(\r,\u,P,\d)$ such that
$$\u\in C([0,T]; D_{A_q}^{1-\f{1}{p},p})\cap L^p(0,T;
W^{2,q}(\O)\cap W^{1,q}_0(\O)), \ \ \partial_t \u \in
L^p(0,T;L^q(\O)),\; \nabla\cdot\u=0;$$
$$\d\in C([0,T]; B_{q,p}^{3(1-\f{1}{p})})\cap L^p(0,T;W^{3,q}(\O)), \ \ \partial_t \d\in L^p(0,T;L^q(\O));$$
$$\r\in C([0,T]; W^{1,r}(\Omega));\ \  P\in L^p(0,T;
W^{1,q}(\O))\  \ \textrm{and}\   \int_\O P\ d\x=0.$$ If $r=\infty$,
then $\r$ belongs to $L^\infty(0,T; W^{1,\infty}(\Omega))\cap
C(\Omega\times [0,T])$ instead of $C([0,T]; W^{1,\infty}(\Omega))$.
The corresponding norm is denoted by $\|\cdot\|_{M^{p,q,r}_T}$.
\end{Definition}

We remark that the condition $\int_\O P\ d\x=0$ in Definition 2.1
holds automatically if we replace $P$ by
$$P-\f{1}{|\O|}\int_\O P\ d\x$$ in \eqref{e2}.
Also, in the above definition, the space $D_{A_q}^{1-\f{1}{p},p}$
stands for some fractional domain of the Stokes operator in $L^q$
(cf. Section 2.3 in \cite{D}). Roughly, the vector-fields of
$D_{A_q}^{1-\f{1}{p},p}$ are vectors which have  $2-\f{2}{p}$
derivatives   in $L^q$, are divergence-free, and vanish on
$\partial\O$. The Besov space (for definition, see \cite{BL})
$B_{q,p}^{3(1-\f{1}{p})}$ can be regarded as the interpolation space
between $L^q$ and $W^{3,q}$, that is,
$$B_{q,p}^{3(1-\f{1}{p})}=(L^q, W^{3,q})_{1-\f{1}{p},p}.$$ Moreover,  we note that
$B_{q,p}^{3(1-\f{1}{p})}\hookrightarrow W^{1,q}$ if $p\geq
\frac{3}{2}$. By the embedding $W^{1,q}\hookrightarrow L^\infty \
\textrm {as}\  q>3$, one has $B_{q,p}^{3(1-\f{1}{p})}\hookrightarrow
L^\infty$, which will be used repeatedly in this paper.

\vspace{2mm}

The local existence will be shown by using an iterative method, and
if the initial data are sufficiently small in some suitable function
spaces, the solution is indeed global in time. More precisely, our
existence result reads as follows.

\begin{Theorem}\label{T1}
Let $\O$ be a bounded smooth domain of $\R^3$. Assume that
$\frac{3}{2}\leq p<\infty,\ 3<q\leq r\leq \infty$,
%$\frac{p}{2}-\frac{3p}{2q}<1$,
 and $\r_0\in W^{1,r}(\Omega)$ with
$\r_0\ge \check{\r}$ for some $\check{\r}>0$, $\u_0\in
D_{A_q}^{1-\f{1}{p},p}, \, \d_0\in B_{q,p}^{3(1-\f{1}{p})}$. Then,
\begin{enumerate}
\item  there exists $T_0>0$  such that system
\eqref{e2} with the initial-boundary conditions \eqref{ic}-\eqref{bc}
has a unique strong solution $(\r, \u, P, \d)\in M^{p,q,r}_{T_0}$
with $0<\check{\r}\leq \r$ on  $\O\times [0,T_0]$.

\item Moreover,  there exist a constant $\nu>0$ depending on $p, q, r, \check{\r}, \mu, \lambda, \gamma, \Omega$, and a constant $\delta>0$ depending only on $p, q, r$, such that if
$$\|\u_0\|_{D_{A_q}^{1-\frac{1}{p}, p}}+\|\d_0\|_{B_{q,p}^{3(1-\frac{1}{p})}}\leq\frac{\nu}{(1+\|\r_0\|_{W^{1,r}})^\delta}$$
holds for the initial data, then  the initial-boundary value problem \eqref{e2}-\eqref{bc} has a unique strong solution $(\r, \u, P,\d)\in
M^{p,q,r}_T$ for all $T>0$. Furthermore, denoting by $\lambda_1$ the
first eigenvalue of the Dirichlet-Laplace operator in $\Omega$, for
some constant $C$ depending on $\mu,\lambda$ and $\gamma$, we have
the following inequality for all $t\in \R^+:$
\begin{equation*}
\|(\sqrt{\r}\u)(t)\|_{L^2}+\|\nabla\d(t)\|_{L^2}\leq
Ce^{-\frac{\lambda_1}{\hat{\rho}} t}(\|\sqrt{\r_0}\u_0\|_{L^2}
+\|\nabla\d_0\|_{L^2})\left(1+(\frac{2\lambda_1}{\hat{\r}}
t)^{\frac{1}{2}}e^{\frac{\lambda_1}{\hat{\rho}} t}\right),
\end{equation*}
with $\hat{\r}=\|\rho_0\|_{L^\infty}$, %given by \eqref{5088},
and for some positive constant $K$ depending only on $\|\r_0\|_{W^{1,r}},$
$ p, q, r, \mu,
\lambda,\gamma, \check{\r}\ \textrm{and}\ \Omega$,
$$\|(\r, \u, P,\d)\|_{M_t^{p, q, r}}\leq K\left(\|\u_0\|_{D_{A_q}^{1-\frac{1}{p}, p}}+\|\d_0\|_{B_{q,p}^{3(1-\frac{1}{p})}}\right).$$
\end{enumerate}
\end{Theorem}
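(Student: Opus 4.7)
The plan is to implement the linearization strategy sketched in the introduction: the auxiliary system decouples the evolution of $\r$ (pure transport), of $(\u,P)$ (density-dependent Stokes with a stress-type source) and of $\d$ (heat equation with a transport/reaction source), so that Proposition \ref{P1}, Theorem \ref{T3} and Theorem \ref{T4} can be applied at each step. All source terms will be controlled in $L^p(0,T;L^q)$ by repeated use of the embedding $B_{q,p}^{3(1-\f{1}{p})}\hookrightarrow W^{1,q}\hookrightarrow L^\infty$, which holds in the range $p\ge\f{3}{2}$, $q>3$ imposed in the statement.

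For part (1), I would define iteratively $(\r^{n+1},\u^{n+1},P^{n+1},\d^{n+1})$ from $(\r^n,\u^n,\d^n)$ by solving, in order, the transport equation $\del_t\r^{n+1}+\u^n\cdot\na\r^{n+1}=0$ with $\r^{n+1}|_{t=0}=\r_0$ (giving $\r^{n+1}\in C([0,T];W^{1,r})$ together with $\r^{n+1}\ge\check{\r}$ on a short interval), then the density-dependent Stokes system with right-hand side $-\r^{n+1}\u^n\cdot\na\u^n-\na\cdot\bigl((\na\d^n)^\top\na\d^n\bigr)$, and finally the parabolic equation $\del_t\d^{n+1}-\D\d^{n+1}=-\u^n\cdot\na\d^n+|\na\d^n|^2\d^n$. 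Uniform bounds of the sequence in $M^{p,q,r}_{T_0}$ on a short interval follow by choosing $T_0$ small enough that the nonlinear source norms absorb into the linear side via H\"older in time. Convergence is then obtained by showing that the differences satisfy a contraction in a weaker norm (for instance $L^\infty_T(L^2)\cap L^2_T(H^1)$ for $\u$ and $\d$ and $L^\infty_T(L^r)$ for $\r$), yielding the unique local strong solution; uniqueness of the limit also follows directly from the contraction.

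For part (2), I would prolong the local solution globally via a priori estimates derived under the smallness hypothesis. The starting point is the Ericksen--Leslie energy identity
\begin{equation*}
\f{d}{dt}\int_\O\bigl(\tfrac12\r|\u|^2+\tfrac12|\na\d|^2\bigr)\,d\x
+\int_\O\Bigl(|\na\u|^2+\bigl|\D\d+|\na\d|^2\d\bigr|^2\Bigr)\,d\x=0,
\end{equation*}
obtained formally by testing \eqref{e22} against $\u$ and \eqref{e23} against $-\D\d-|\na\d|^2\d$ and integrating by parts, the cross terms cancelling by the structure of $\na\cdot((\na\d)^\top\na\d)$. Combined with the Poincar\'e inequality $\|\u\|_{L^2}^2\le\lambda_1^{-1}\|\na\u\|_{L^2}^2$ and the pointwise upper bound $\r\le\hat{\r}$, this gives exponential decay of $\|\sqrt{\r}\u\|_{L^2}$ at rate $\lambda_1/\hat{\r}$; a Gr\"onwall step, exploiting $|\d|=1$ to tame the quadratic source in the $\d$-equation, produces the corrective factor $1+(2\lambda_1 t/\hat{\r})^{1/2}e^{\lambda_1 t/\hat{\r}}$ in the inequality for $\|\na\d\|_{L^2}$. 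Bootstrapping through higher-order energy estimates on $\na\u$ and $\na^2\d$ (absorbing nonlinear contributions into the dissipation thanks to smallness), and then feeding the resulting global-in-time bounds back into Proposition \ref{P1} and Theorems \ref{T3}--\ref{T4}, yields the $M^{p,q,r}_t$ estimate with the stated dependence on the initial data.

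The principal obstruction, and what distinguishes the argument from Danchin's \cite{D} treatment of the pure density-dependent Navier-Stokes case, is controlling the strongly nonlinear coupling $\na\cdot((\na\d)^\top\na\d)$ in the momentum equation together with $|\na\d|^2\d$ in the director equation: closing the bootstrap requires $W^{3,q}$-type regularity of $\d$ whose bounding constants depend favorably on the data, and this has to be achieved without improving the regularity of $\r$ beyond $W^{1,r}$. This forces reliance on the density-dependent Stokes estimate of Theorem \ref{T4} rather than treating $\r$ as a constant coefficient, and it is here that the subtler estimates on the director field announced in the introduction have to be developed.
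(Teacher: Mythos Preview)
Your outline is broadly correct and matches the paper's strategy, but two implementation choices diverge from what the paper actually does, and one of them is a real issue.

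\textbf{Contraction norm.} For the convergence step you propose an $L^\infty_T(L^2)\cap L^2_T(H^1)$-type energy norm for the differences. The paper instead stays entirely within the maximal-regularity framework: it shows the iterates form a Cauchy sequence in $M^{p,s,r}_{T_0}$ with $s=\tfrac{qr}{q+r}$, applying Theorems \ref{T3}--\ref{T4} to the difference equations. The point is that difference terms like $\bar{\r}^n(\del_t\u^n+\u^n\cdot\na\u^n)$ naturally land in $L^p_T(L^s)$ by H\"older ($L^r\times L^q\to L^s$), so the same linear estimates close with one lower integrability index. Your $L^2$-based contraction is a plausible alternative, but it does not reuse Theorems \ref{T3}--\ref{T4} and would require a separate argument; the paper's choice is cleaner and is why $s$ appears throughout Section~4.3.

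\textbf{Global bootstrap.} Your plan to ``bootstrap through higher-order energy estimates on $\na\u$ and $\na^2\d$'' is not what the paper does, and for variable density it is not clear it would close. The paper never performs $H^1$- or $H^2$-level energy estimates on the nonlinear system; instead it feeds the basic $L^2$ decay of Lemma \ref{51} directly into the maximal-regularity estimate \eqref{T42}, controlling the dangerous low-order term $\|\u\|_{L^p_t(L^q)}$ by interpolating $L^q$ between $L^2$ and $W^{2,q}$ (inequality \eqref{519}) and absorbing the $W^{2,q}$ piece. The crucial technical point is bounding $\int_0^t\|\na\u\|_{L^\infty}$---which governs the growth of $\|\r\|_{W^{1,r}}$ through Proposition \ref{P1}---\emph{without} a diverging time factor. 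This is achieved via the interpolation $(L^2,W^{2,q})_\vartheta\hookrightarrow W^{1,\infty}$ together with the exponential $L^2$ decay, not by higher-order energy identities. Without this specific mechanism the density norm could grow and the constants in Theorem \ref{T4} would blow up, so your bootstrap sketch is missing the key idea that makes the global step work in the inhomogeneous setting.
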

\bigskip

Similar to \cite{LLW},  a $\textit{weak solution}$ to \eqref{e2}
with the initial-boundary conditions \eqref{ic}-\eqref{bc} means a
quadruplet $(\tilde{\r}, \tilde{\u}, \Pi, \tilde{\bf d})$ satisfying
system \eqref{e2} in $\O\times(0,T)$ for $0<T\leq \infty$ in the
sense of distributions, i.e,
\begin{equation*}
\int\!\!\!\int_{\O\times(0,T)}(\tilde{\r}\partial_t\phi+\tilde{\r}\tilde{\u}\cdot\nabla\phi)\ d\x dt+\int_{\O}\r_0\phi(\cdot,0)\ d\x=0,
\end{equation*}
\begin{equation*}
\begin{split}
&-\int\!\!\!\int_{\O\times(0,T)}\tilde{\r}\tilde{\u}\cdot(\partial_t\phi+\tilde{\u}\cdot\nabla\phi)\ d\x dt+\int\!\!\!\int_{\O\times(0,T)}\nabla\tilde{\u}:\nabla\phi\ d\x dt\\
&=\int_\O\r_0\u_0\cdot\phi(\cdot,0)\ d\x
+\int\!\!\!\int_{\O\times(0,T)}(\nabla\d\odot\nabla\d):\nabla\phi\
d\x dt,
\end{split}
\end{equation*}
and
\begin{equation*}
\begin{split}
&-\int\!\!\!\int_{\O\times(0,T)}\tilde{\d}\cdot\partial_t\phi \ d\x dt+\int\!\!\!\int_{\O\times(0,T)}\tilde{\u}\cdot\nabla\tilde{\d}\cdot\phi \ d\x dt+\int\!\!\!\int_{\O\times(0,T)}(\nabla\tilde{\d}):(\nabla\phi)\ d\x dt\\
&=\int_\O\d_0\cdot\phi(\cdot,0)\
d\x+\int\!\!\!\int_{\O\times(0,T)}|\nabla\tilde{\d}|^2\tilde{\d}\cdot\phi\
d\x dt,
\end{split}
\end{equation*}
for all $\phi\in C_c^\infty(\O\times [0,T);\R) \ \textrm{or}\ C_c^\infty(\O\times [0,T);\R^3)$. Moreover,
$(\tilde{\u}, \tilde{\bf d})$ satisfies \eqref{bc} in the sense of
trace. In this weak formulation, the pressure $\Pi $ can be
determined as in the Navier-Stokes equations (see \cite{Galdi}).
%We state here the existence of weak solutions in Theorem ?? of \cite{??}:
%\begin{Proposition} \label{P1}
%Assume that $\u_0\in L^2(\Omega)$ and $\d_0\in H^1(\Omega)$ with
%$\d_0\mid_{\partial\Omega}\in C^2(\partial\Omega)$. Then
%the system \eqref{e2} with the initial-boundary  conditions \eqref{ic}
%\eqref{bc} has a global weak solution $({\bf v},
%\tilde{\bf d})$ such that
%$${\bf v}\in L^2(0,T; H_0^1(\Omega))\cap L^\infty(0, T; L^2(\Omega)),$$
%and
%$$\tilde{
%\d}\in L^2(0,T; H^2(\Omega))\cap L^\infty(0,T; H^1(\Omega)),$$ for
%all $T\in (0,\infty)$.
%\end{Proposition}
%\begin{Remark}\label{r}
%(1) Its uniqueness is still an open question.

%(2) Since ${\bf v}\in L^2(0,T; H^1(\Omega))\cap L^\infty(0, T; L^2(\Omega))$, it follows from the Ladyzhenskaya's inequality that ${\bf v}\in L^4(\O\times[0,T])$. Since $\nabla\tilde{\d}\in L^2(0,T;H^1(\O))$ and $|\tilde{\d}|=1$, we have
%$$\triangle\tilde{\d}\cdot\tilde{\d}+|\nabla\tilde{\d}|^2=0.$$ Hence $\nabla\tilde{\d}\in L^4(\O\times[0,T])$ and ${\bf v}\cdot\nabla{\bf v}+\nabla\cdot(\nabla\tilde{\d}\odot\nabla\tilde{\d})\in L^{\frac{4}{3}}(\O\times[0,T])$.
%\end{Remark}

\vspace{3mm}

Next, we will give a   uniqueness result. %It is similar to the result which Serrin used in the Navier-Stokes equations (\cite{JS1,JS2}), also which Lin-Liu used in the Leslie system of variable length (\cite{LL}).
Suppose $$\tilde{\r}\in L^\infty(\O\times [0,T])\cap
C(0,T;L^p(\O)),\ \forall\ p\geq 1,$$ $$\tilde{\u}\in
L^{2,\infty}(\O\times [0,T])\cap W_2^{1,0}(\O_T),\
\tilde{\r}|\tilde{\u}|^2\in L^\infty(0,T;L^1(\O)),$$
$$\tilde{\d}\in L^\infty ([0,T],H^1(\O))\cap
L^2([0,T],H^2(\O)),\ \nabla\Pi\in L^{\frac{4}{3}}(0,T;
L^{\frac{6}{5}}(\O))$$ (for all $T\in (0,\infty)$) is a global weak
solution to \eqref{e2}-\eqref{bc}. Then, we have the following
energy inequality (cf. \cite{LLW} Section 5 for the two-dimensional  homogeneous
case):
\begin{equation}\label{26}
\begin{split}
&\frac{1}{2}\int_\O(\tilde{\r}(t)|\tilde{\u}(t)|^2+|\nabla\tilde{\d}(t)|^2)
\ d\x+\int_0^t\int_\O
(|\nabla\tilde{\u}|^2+|\triangle\tilde{\d}+|\nabla\tilde{\d}|^2\tilde{\d}|^2)\
d\x d\tau\\& \leq\frac{1}{2}\int_\O(\r_0|\u_0|^2+|\nabla\d_0|^2) \
d\x,
\end{split}
\end{equation}
for all $t\in (0,\infty)$.

 However, as for the standard Navier-Stokes equations,
 the question of uniqueness in the above class of solutions remains  open.
 For the same initial-boundary conditions, the relation between
weak solutions and  strong solutions can be formulated as:

\begin{Theorem}\label{T2}
Let $\O, p, q, r$ be as in Theorem \ref{T1} and $\r_0, \u_0, \d_0$
satisfy the assumptions of Theorem \ref{T1}. Then any weak solution
to \eqref{e2}-\eqref{bc} in the above class is unique and indeed is
equal to its unique strong solution.
\end{Theorem}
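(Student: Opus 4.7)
The plan is to prove weak--strong uniqueness by a \emph{relative energy} argument in the spirit of Danchin~\cite{D}, extended to accommodate the director field. Let $(\r,\u,P,\d)\in M^{p,q,r}_T$ be the strong solution from Theorem~\ref{T1}, and let $(\tilde{\r},\tilde{\u},\Pi,\tilde{\d})$ be any weak solution with the same initial data and satisfying~\eqref{26}. Introduce the differences $\delta\r:=\tilde{\r}-\r$, $\delta\u:=\tilde{\u}-\u$, $\delta\d:=\tilde{\d}-\d$, and the relative energy
\begin{equation*}
\E(t):=\f{1}{2}\int_\O \tilde{\r}|\delta\u|^2\,d\x+\f{1}{2}\int_\O |\nabla\delta\d|^2\,d\x.
\end{equation*}
Since $\E(0)=0$, it suffices to derive a differential inequality of the form $\f{d}{dt}\E\leq \Phi(t)\E+\text{absorbed dissipation}$ with $\Phi\in L^1(0,T)$, from which Gronwall forces $\E\equiv 0$.

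To obtain such an estimate I would add the weak energy inequality~\eqref{26} to the exact energy identity satisfied by the strong solution (via \eqref{e22}--\eqref{e23}), and then subtract twice the cross-identities obtained by (i) testing the weak momentum equation against $\u$ and the weak director equation against $-\D\d$, and (ii) testing the strong momentum equation against $\tilde{\u}$ and the strong director equation against $-\D\tilde{\d}$. These cross-tests are legitimate because the strong solution lies in $L^p_T(W^{2,q})$ for $\u$ and $L^p_T(W^{3,q})$ for $\d$, so it serves as an admissible test function. After reorganizing using $\partial_t\tilde{\r}+\nabla\cdot(\tilde{\r}\tilde{\u})=0$ and $\nabla\cdot\u=\nabla\cdot\tilde{\u}=0$, one arrives at
\begin{equation*}
\f{d}{dt}\E+\int_\O|\nabla\delta\u|^2\,d\x+\int_\O|\D\delta\d|^2\,d\x
\leq \int_\O \tilde{\r}\,\delta\u\cdot\nabla\u\cdot\delta\u\,d\x
+\int_\O \delta\r\,(\partial_t\u+\u\cdot\nabla\u)\cdot\delta\u\,d\x+\mathcal{R}_{\d},
\end{equation*}
where $\mathcal{R}_{\d}$ collects the nonlinear director contributions. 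The first two error terms are handled by the strong solution's regularity: $\nabla\u\in L^p(0,T;L^\infty)$ via $W^{2,q}\hookrightarrow W^{1,\infty}$ for $q>3$, while $\|\delta\r\|_{L^q}$ is controlled by Proposition~\ref{P1} applied to the transport equation $\partial_t\delta\r+\tilde{\u}\cdot\nabla\delta\r=-\delta\u\cdot\nabla\r$, producing a bound proportional to $\int_0^t\|\nabla\r\|_{L^r}\|\delta\u\|_{L^{qr/(r-q)}}\,ds$ that is absorbable by interpolation into the dissipation on the left.

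The main obstacle is controlling $\mathcal{R}_{\d}$, which contains cubic terms such as $|\nabla\tilde{\d}|^2\tilde{\d}-|\nabla\d|^2\d$ from the director equation and anisotropic stress differences $(\nabla\tilde{\d})^\top\nabla\tilde{\d}-(\nabla\d)^\top\nabla\d$ from the momentum equation. The strategy is to decompose each such difference into a piece linear in $(\delta\d,\nabla\delta\d)$ and a cross term, for instance
\begin{equation*}
|\nabla\tilde{\d}|^2\tilde{\d}-|\nabla\d|^2\d
=|\nabla\tilde{\d}|^2\,\delta\d+\bigl(\nabla(\tilde{\d}+\d):\nabla\delta\d\bigr)\,\d,
\end{equation*}
and then combine the weak bound $\nabla\tilde{\d}\in L^\infty_T(L^2)\cap L^2_T(H^1)$ coming from~\eqref{26}, the strong bound $\d\in L^\infty_T(W^{1,\infty})$, and Gagliardo--Nirenberg interpolation to write each contribution in the form $\Phi(t)\E(t)+\varepsilon\bigl(\|\nabla\delta\u\|_{L^2}^2+\|\D\delta\d\|_{L^2}^2\bigr)$ with $\Phi\in L^1(0,T)$. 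Absorbing the $\varepsilon$ pieces into the left-hand side dissipation and applying Gronwall yields $\E\equiv 0$, hence $\tilde{\u}=\u$ and $\nabla\tilde{\d}=\nabla\d$; uniqueness for the transport equation~\eqref{e21} with Lipschitz velocity then gives $\tilde{\r}=\r$, and $\Pi=P$ follows from the standard pressure-recovery argument for the Stokes system.
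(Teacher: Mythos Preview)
Your overall architecture---relative energy plus Gronwall, with cross-testing of the strong solution against the weak one---is exactly the route the paper takes. However, two of your steps do not go through as written.

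First, your control of $\delta\r$ is flawed. You invoke Proposition~\ref{P1} for $\partial_t\delta\r+\tilde{\u}\cdot\nabla\delta\r=-\delta\u\cdot\nabla\r$, but that proposition requires the transporting field to be Lipschitz, and the weak velocity $\tilde{\u}$ is only in $L^2_T(H^1_0)$. Even if you bypass this by a direct $L^q$ energy estimate on the transport equation, you would need $\delta\u\in L^{qr/(r-q)}$; for $q>3$ this exponent exceeds $6$, while the available regularity gives only $\delta\u\in L^2_T(L^6)$. The paper instead multiplies the transport equation by $\delta\r$ and integrates to get $\tfrac{d}{dt}\|\delta\r\|_{L^2}^2\leq C_\varepsilon\|\nabla\r\|_{L^3}^2\|\delta\r\|_{L^2}^2+\varepsilon\|\nabla\delta\u\|_{L^2}^2$, and then \emph{includes $\|\delta\r\|_{L^2}^2$ in the Gronwall functional} alongside $\E$.

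Second, the weak bounds $\nabla\tilde{\d}\in L^\infty_T(L^2)\cap L^2_T(H^1)$ are not enough to close the director remainder $\mathcal{R}_{\d}$. After your decomposition, the piece $\bigl(\nabla(\tilde{\d}+\d):\nabla\delta\d\bigr)\d$ tested against $\D\delta\d$ produces a factor $\|\nabla\tilde{\d}\|_{L^\infty}$ (or, after interpolation, $\|\nabla\tilde{\d}\|_{L^6}^4$, which is only in $L^{1/2}_T$, not $L^1_T$), and the piece $|\nabla\tilde{\d}|^2\delta\d$ forces you to track $\|\delta\d\|_{L^2}^2$, which is \emph{not} controlled by $\|\nabla\delta\d\|_{L^2}^2$ under Neumann boundary conditions. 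The paper handles both issues: it appeals to quasilinear parabolic regularity (Lady\v{z}enskaja--Solonnikov--Ural'ceva) applied to \eqref{e23} to upgrade the weak director field to $\tilde{\d}\in C^{1,\alpha}$, so that $\|\nabla\tilde{\d}\|_{L^\infty_{t,x}}<\infty$; and it derives a separate $L^2$ estimate for $\delta\d$ by testing the director difference equation against $\delta\d$ itself, adding $\|\delta\d\|_{L^2}^2$ to the Gronwall quantity. Your functional $\E$ should therefore be enlarged to include both $\|\delta\r\|_{L^2}^2$ and $\|\delta\d\|_{L^2}^2$, and you need to justify the $L^\infty$ bound on $\nabla\tilde{\d}$ before the Gronwall step.
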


Usually, the uniqueness in the above theorem is called  $\textit{weak-strong
uniqueness}$. For the similar results on the compressible
Navier-Stokes equation, we refer the  readers to \cite{EB, L}.

\bigskip

%%%%%%%%%%%%%%%%%%%%%%%%%%
\section{Maximal Regularity}

In this section, we recall a quite standard result for the transport
equation and the maximal regularities for the parabolic operator and
the non-homogeneous non-stationary Stokes operator, and prove some
$L^\infty$ estimates as well.

For $T>0$, $1<p,q<\infty$, denote
$$\mathcal{W}_{q,p}(0,T):= \big(W^{1,p}(0,T; L^q(\O))\big)^3\cap \big(L^p(0,T;W^{3,q}(\O))\big)^3.$$
Throughout this paper, $C$ stands for a generic positive constant.

We first recall a result for the transport equation (cf. Proposition
3.1 in \cite{D}):

\begin{Proposition}\label{P1}
Let $\Omega$ be a Lipschitz domain of \ $\R^3$ and ${\bf v}\in
\left(L^1(0,T; Lip)\right)^3$ be a solenoidal  vector-field such that ${\bf
v}\cdot{\bf n}=0$ on $\partial\Omega$. Let $\r_0 \in
W^{1,r}(\Omega)$ with $r\in [1,\infty]$. Then the system
\begin{equation*}
\begin{cases}
\partial_t\r+{\bf v}\cdot\nabla\r=0,\\
\r|_{t=0}=\r_0
\end{cases}
\end{equation*}
has a unique solution in $L^\infty(0,T;W^{1,\infty}(\Omega))\cap\
C([0,T];\cap_{q<\infty}W^{1,q}(\Omega))$ if $r=\infty$,  and in
$C([0,T]; W^{1,r}(\Omega))$ if $r<\infty$. Moreover, the following
estimate holds:
$$\|\r(t)\|_{W^{1,r}}\leq e^{\int_0^t\|\nabla{\bf
v}(\tau)\|_{L^\infty} d\tau}\|\r_0\|_{W^{1,r}}, \quad t\in [0,T].$$
If in addition $\r$ belongs to $L^p(\Omega)$ for some $p\in [1,\infty]$,
then $$\|\r(t)\|_{L^p}=\|\r_0\|_{L^p}, \quad t\in [0,T].$$
\end{Proposition}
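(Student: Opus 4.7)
The plan is to combine the method of characteristics for existence/uniqueness with an energy-type argument for the $W^{1,r}$ estimate, treating the $r=\infty$ case by duality or as a limit. Because the coefficient $\mathbf{v}$ is only Lipschitz in space and merely $L^1$ in time, a regularization step will be needed to justify the differentiation; this is where the main technical care lies.

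\emph{Step 1: existence via the flow.} Since $\mathbf{v}(t,\cdot)$ is Lipschitz with $\int_0^T \|\nabla\mathbf{v}\|_{L^\infty}\,d\tau<\infty$ and $\mathbf{v}\cdot\mathbf{n}=0$ on $\partial\Omega$, the Cauchy-Lipschitz theorem produces a unique flow map $X(t,\cdot):\Omega\to\Omega$ solving $\partial_t X(t,x)=\mathbf{v}(t,X(t,x))$, $X(0,x)=x$. Because $\nabla\cdot\mathbf{v}=0$, the Jacobian of $X(t,\cdot)$ equals $1$, so $X(t,\cdot)$ is a volume-preserving bi-Lipschitz homeomorphism of $\Omega$. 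I would then define $\rho(t,x):=\rho_0(X(t,\cdot)^{-1}(x))$, which formally solves the transport equation, and verify this in the distributional sense by a mollification argument.

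\emph{Step 2: uniqueness.} For linear transport, uniqueness follows by duality: given two solutions with the same data, their difference $w$ solves $\partial_t w+\mathbf{v}\cdot\nabla w=0$ with $w(0)=0$. For any test function I can run a dual backward transport problem along $X$; measure-preservation of the flow yields $w\equiv 0$. Alternatively, I can multiply by a smooth approximation of $\operatorname{sgn}(w)$ and use $\nabla\cdot\mathbf{v}=0$ together with $\mathbf{v}\cdot\mathbf{n}=0$ to conclude $\frac{d}{dt}\|w\|_{L^1}\le 0$.

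\emph{Step 3: $L^p$ conservation and the $W^{1,r}$ estimate.} Multiplying the equation by $|\rho|^{p-2}\rho$, integrating over $\Omega$, and using $\nabla\cdot\mathbf{v}=0$ plus $\mathbf{v}\cdot\mathbf{n}=0$ kills the convection term, giving $\frac{d}{dt}\|\rho\|_{L^p}^p=0$ for $p<\infty$; the $p=\infty$ case follows from the representation along the flow. For the gradient, differentiate the equation to obtain
\begin{equation*}
\partial_t(\nabla\rho)+\mathbf{v}\cdot\nabla(\nabla\rho)=-(\nabla\mathbf{v})^{\top}\nabla\rho.
\end{equation*}
For $r<\infty$, multiplying by $|\nabla\rho|^{r-2}\nabla\rho$, integrating, and again exploiting $\nabla\cdot\mathbf{v}=0$ and the boundary condition, I would get
\begin{equation*}
\frac{1}{r}\frac{d}{dt}\|\nabla\rho\|_{L^r}^r\le \|\nabla\mathbf{v}\|_{L^\infty}\|\nabla\rho\|_{L^r}^r,
\end{equation*}
and Grönwall's inequality yields the claimed bound. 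For $r=\infty$, I would differentiate along the flow: $|\nabla\rho(t,X(t,x))|\le|\nabla\rho_0(x)|\exp\!\int_0^t\|\nabla\mathbf{v}(\tau)\|_{L^\infty}\,d\tau$.

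\emph{Step 4: time continuity and regularization.} The above manipulations require smoothness not available a priori. I would regularize $\mathbf{v}$ and $\rho_0$, solve for smooth $\rho^\varepsilon$, derive the estimates uniformly in $\varepsilon$, and pass to the limit; a DiPerna--Lions type commutator lemma handles the term $[\mathbf{v}\cdot\nabla,\,\eta_\varepsilon*]\rho$ because $\mathbf{v}$ is Lipschitz. Continuity in time with values in $W^{1,r}$ (for $r<\infty$) follows from the uniform bound plus density; for $r=\infty$ one only obtains $L^\infty_t(W^{1,\infty})\cap C(\Omega\times[0,T])$, consistent with the statement. The main obstacle is precisely this regularization/commutator step, since the low time regularity of $\mathbf{v}$ forbids any direct differentiation; once the commutator vanishes in the appropriate norm, everything else is routine Grönwall.
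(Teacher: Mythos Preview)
The paper does not prove this proposition at all: it is stated as a recalled result and attributed to Proposition~3.1 of Danchin~\cite{D}, with no argument given. Your sketch is the standard characteristics-plus-Gr\"onwall proof one finds in that reference (flow map from Cauchy--Lipschitz, measure preservation from $\nabla\cdot\mathbf{v}=0$, differentiate the equation and apply Gr\"onwall to $\|\nabla\rho\|_{L^r}$, regularize to justify), and it is correct in outline; there is nothing to compare against here beyond noting that your approach matches the cited source.
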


We first recall the maximal regularity for the parabolic operator (cf.
Theorem 4.10.7 and Remark 4.10.9 in \cite{HA}):
\begin{Theorem}\label{T3}
Given $1<p, q<\infty$, $\omega_0\in B_{q,p}^{3(1-\f{1}{p})}$ and
$f\in \big(L^p(0,T; L^q(\R^3))\big)^3$, the Cauchy problem
\begin{equation*}
\begin{cases}
 {\omega}_t-\D\omega=f, \\
\omega|_{t=0}=\omega_0
\end{cases}
\end{equation*}
has a unique solution $\omega\in \mathcal{W}_{q,p}(0,T)$, and
$$\|\omega\|_{\mathcal{W}_{q,p}(0,T)}\le C\left(\|f\|_{L^p_T(L^q)}+\|\omega_0\|_{B_{q,p}^{3(1-\f{1}{p})}}\right),$$
where $C$ is independent of $\omega_0$, $f$ and $T$. Moreover, there
exists a positive constant $c_0$ independent of $f$ and $T$ such
that
$$\|\omega\|_{\mathcal{W}_{q,p}(0,T)}\ge c_0\sup_{t\in(0,T)}\|\omega(t)\|_{B_{q,p}^{3(1-\f{1}{p})}}.$$
\end{Theorem}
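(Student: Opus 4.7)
The plan is to reduce the problem to two well-studied ingredients: the analytic semigroup generated by $\D$ on $L^q(\R^3)$ and the classical maximal $L^p$-regularity for the heat operator on $\R^3$. First I would split the unknown as $\omega=\omega_H+\omega_I$, where $\omega_H$ solves the homogeneous Cauchy problem $(\partial_t-\D)\omega_H=0$ with $\omega_H|_{t=0}=\omega_0$, and $\omega_I$ solves the inhomogeneous problem with zero initial datum and source $f$.

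For $\omega_H$, I would write $\omega_H(t)=e^{t\D}\omega_0$ and invoke the standard real-interpolation characterisation of trace spaces: since $\D$ generates an analytic semigroup on $L^q(\R^3)$, the trace at $t=0$ of $W^{1,p}(0,T;L^q)\cap L^p(0,T;W^{3,q})$ is precisely $(L^q,W^{3,q})_{1-\f{1}{p},p}=B_{q,p}^{3(1-\f{1}{p})}$, and the forward trace inequality yields
$$\|\omega_H\|_{\mathcal{W}_{q,p}(0,T)}\le C\|\omega_0\|_{B_{q,p}^{3(1-\f{1}{p})}},$$
with $C$ independent of $T$ by scaling on $\R^3$. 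The same identity read in the reverse direction delivers the sup-estimate $\|\omega(t)\|_{B_{q,p}^{3(1-\f{1}{p})}}\le C\|\omega\|_{\mathcal{W}_{q,p}(0,T)}$ claimed in the second half of the theorem. For $\omega_I$, Duhamel's formula gives $\omega_I(t)=\int_0^t e^{(t-s)\D}f(s)\,ds$, and the required bounds on $\partial_t\omega_I$ and on its top-order spatial derivatives in $L^p_T(L^q)$ are exactly the classical maximal $L^p$-regularity for the heat equation on $\R^3$. I would establish this either by applying the Mihlin--H\"ormander multiplier theorem directly to the Fourier symbols $i\tau(i\tau+|\xi|^2)^{-1}$ and $|\xi|^{2k}(i\tau+|\xi|^2)^{-1}$, or, in operator-theoretic form, by invoking the Dore--Venni / Weis theorem together with the fact that $-\D$ admits a bounded $H^\infty$-functional calculus on $L^q(\R^3)$; both routes rely on the UMD property of $L^q(\R^3)$ for $1<q<\infty$.

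Uniqueness is then the easy step: when $\omega_0=0$ and $f=0$, uniqueness of mild solutions of $(\partial_t-\D)\omega=0$ in $C([0,T];L^q(\R^3))$, or equivalently a truncated energy argument in $L^2$ combined with the embedding $\mathcal{W}_{q,p}\hookrightarrow C([0,T];L^q)$, forces $\omega\equiv 0$. The main obstacle is the maximal-regularity step for $\omega_I$: the semigroup estimates and trace interpolation for the homogeneous part are classical, but proving that convolution with $\partial_t e^{t\D}$ defines a bounded operator from $L^p(0,T;L^q)$ into itself requires either a delicate singular-integral analysis of the heat kernel or the abstract operator-theoretic machinery sketched above --- non-trivial, but well documented in Amann's monograph, which the authors rightly cite as the reference.
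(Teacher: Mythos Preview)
The paper does not prove this theorem at all: it is stated as a quoted result, introduced with ``We first recall the maximal regularity for the parabolic operator (cf.\ Theorem 4.10.7 and Remark 4.10.9 in \cite{HA})'' and immediately followed by the statement of Theorem~\ref{T4}. There is therefore no in-paper argument to compare your proposal against.

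Your outline is a correct and standard sketch of how such a result is established --- splitting into homogeneous and inhomogeneous parts, identifying the trace space by real interpolation, and obtaining the inhomogeneous estimate via maximal $L^p$-regularity for the analytic semigroup (through Mihlin--H\"ormander or the Dore--Venni/Weis machinery on the UMD space $L^q$). This is essentially the route taken in Amann's monograph, so your sketch and the paper's citation point to the same underlying argument. One small caution: as stated in the paper, the target space $\mathcal{W}_{q,p}$ involves $W^{3,q}$ while the operator is the second-order Laplacian, so the literal identification $(L^q,W^{3,q})_{1-\frac{1}{p},p}=B_{q,p}^{3(1-\frac{1}{p})}$ you wrote does not match the domain of $-\D$ on $L^q$; the paper is tacitly using the abstract version of Amann's theorem (for a sectorial operator $A$ with domain $D(A)$) rather than the heat equation with its natural $W^{2,q}$ domain, and you would need to adjust your framing accordingly if you actually carried out the details.
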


Now we recall the existence theorem (cf. Theorem 3.7 in \cite{D})
for the linear system
\begin{equation}\label{linear}
\begin{cases}
\r\partial_t\u-\mu\triangle\u+\nabla P=f,\quad \int_\O P\ d\x=0,\\
\nabla\cdot\u=0,\\
\u|_{t=0}=\u_0,\quad \u|_{\partial\O}=0.
\end{cases}
\end{equation}

\begin{Theorem}\label{T4}
Let $\O\subset\R^3$ be a bounded domain with $C^{2+\varepsilon}$
boundary, $1<p<\infty$ and $3<q\leq r\leq\infty$.
Let $\u_0\in D_{A_q}^{1-\f{1}{p},p}$ and $f\in \big(L^p(0,T;
L^q(\Omega))\big)^3$. Assume that the density $\r$ satisfies
$$0<\check{\r}\leq\r(x,t)\leq\hat{\r}<\infty, \quad (x,t)\in \O\times (0,T),$$
and for some $\beta\in (0,1]$,$$\r\in
L^\infty(0,T;W^{1,r}(\Omega))\cap C^\beta([0,T];
L^\infty(\Omega)).$$
 Then the system \eqref{linear}
has a unique solution $(\u, P)$ such that
$$\u\in C([0,T]; D_{A_q}^{1-\f{1}{p},p})\cap L^p(0,T;W^{2,q}(\O)\cap W^{1,q}_0(\O)), $$
$$\partial_t\u \in L^p(0,T;L^q(\O)),$$
and  $$P\in L^p(0,T;W^{1,q}(\Omega)).$$
Moreover, there exists some constant $C$
depending on $p, q, r \ \textrm{and}\ \Omega$ such that for all \
$t\in [0,T]$, the following inequalities hold:
\begin{equation}\label{T41}
\begin{split}
&\check{\r}^{\frac{1}{p}}\mu^{1-{\frac{1}{p}}}\|\u(t)\|_{D_{A_q}^{1-\f{1}{p},p}}+\mu\|\u\|_{L_t^p(W^{2,q})}+\check{\r}\|\partial_t\u\|_{L_t^p(L^q)}+\|P\|_{L_t^p(W^{1,q})}\\
&\le C\xi_\r^3\mathcal{B}_\r^{2+\tilde{\varsigma}}(t)e^{\frac{C \mu t \,\mathcal{C}_\r(t)}{\check{\r}d(\Omega)^2}}
\left(\check{\r}^{\frac{1}{p}}\mu^{1-{\frac{1}{p}}}\|\u_0\|_{D_{A_q}^{1-\f{1}{p},p}}+\|f\|_{L_t^p(L^q)}\right),
\end{split}
\end{equation}
and
\begin{equation}\label{T42}
\begin{split}
&\check{\r}^{\frac{1}{p}}\mu^{1-{\frac{1}{p}}}\|\u(t)\|_{D_{A_q}^{1-\f{1}{p},p}}+\|\check{\r}\partial_t\u, \mu\nabla^2\u, \nabla P\|_{L_t^p(L^q)}\\
&\le
C\left(\xi_\r^4\mathcal{B}_\r^{2+\tilde{\varsigma}}(t)\big(\check{\r}^{\frac{1}{p}}\mu^{1-{\frac{1}{p}}}\|\u_0\|_{D_{A_q}^{1-\f{1}{p},p}}+\|f\|_{L_t^p(L^q)}\big)
+\frac{\xi_\r
\mu}{d(\Omega)^2}\mathcal{C}_\r(t)\|\u\|_{L^p_t(L^q)}\right),
\end{split}
\end{equation}
where $d(\Omega)$ is the diameter of $\Omega$,\
$\xi_\r:=\hat{\r}/\check{\r}$, and
$$\mathcal{B}_\r(t):=1+d(\O)\left(\check{\r}^{-1}\|\nabla\r\|_{L_t^\infty(L^r)}\right)^{\frac{r}{r-3}},$$
$$\mathcal{C}_\r(t):=\xi_\r^{\frac{2q}{q-1}}\mathcal{B}_\r(t)^{r^\ast}+\hat{\r}\mu^{-1}d(\O)^2\xi_\r^{1+\frac{1}{\beta}}\mathcal{B}_\r(t)^{(1+\frac{1}{\beta})(2+\tilde{\varsigma})}M_\beta(t)^{\frac{1}{\beta}},$$
with $$M_\beta(t):=\check{\r}^{-1}\|\r\|_{C^{0,0}_{0,\beta}(\O\times[0,t])},$$
 and the exponents $\tilde{\varsigma}, r^\ast$ are numbers determined by $p,q,r$. %\sup_{\x\in \O}_{\tau,\tau'\in[0,t],\tau\neq\tau'}\frac{|\r()|}{}$
\end{Theorem}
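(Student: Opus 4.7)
Following \cite{D}, my strategy is to treat $\r$ as a perturbation of a constant reference density and to combine the classical maximal $L^p_t L^q_x$-regularity for the constant-coefficient Stokes system with a localization argument based on the H\"older continuity in $t$ and the $W^{1,r}$-regularity in $x$ of $\r$.

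First, I would recall the constant-density maximal regularity: for $\r\equiv\r_1>0$ and a $C^{2+\varepsilon}$ domain, any data $\u_0\in D_{A_q}^{1-\f{1}{p},p}$ and $f\in L^p(0,T;L^q)$ produce a unique solution satisfying
$$\r_1\|\partial_t\u\|_{L^p_T(L^q)}+\mu\|\nabla^2\u\|_{L^p_T(L^q)}+\|\nabla P\|_{L^p_T(L^q)}\le C\left(\r_1^{1/p}\mu^{1-1/p}\|\u_0\|_{D_{A_q}^{1-\f{1}{p},p}}+\|f\|_{L^p_T(L^q)}\right).$$
This can be proved via Solonnikov's theory or, equivalently, by exploiting that $-A_q$ generates a bounded analytic semigroup on $L^q_\sigma(\O)$ together with the Dore--Venni / Weis $\mathcal R$-boundedness theorem.

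For the variable-density problem, I would rewrite the system as
$$\r_1\partial_t\u-\mu\triangle\u+\nabla P=f-(\r-\r_1)\partial_t\u,$$
and localize in both space and time. A partition of unity $\{\chi_j\}$ with supports of diameter $\eta$ yields $\|\r-\r_j\|_{L^\infty_x}\lesssim\|\nabla\r\|_{L^r}\eta^{1-3/r}$ on each patch, while time slabs of length $\tau$ give the H\"older bound $M_\beta\tau^\beta$. Applying the constant-density estimate to $\chi_j\u$ generates commutator terms $[\chi_j,\mu\triangle]\u$ and $[\chi_j,\nabla]P$, which are lower-order and can be handled by interpolation. Choosing $(\eta,\tau)$ so that $\|\r-\r_j\|_{L^\infty_{t,x}}$ is small enough to be absorbed, summing over $j$, and optimizing in the parameters produces the factors $\xi_\r,\mathcal B_\r,\mathcal C_\r$ in \eqref{T41}. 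Existence then follows by a standard continuity/contraction argument from this a priori estimate, and uniqueness is an immediate consequence of the same estimate applied to the difference of two solutions. Estimate \eqref{T42} is obtained by splitting $\r\partial_t\u$ and $f$ into their Helmholtz components, with the residual $\frac{\xi_\r\mu}{d(\O)^2}\mathcal C_\r(t)\|\u\|_{L^p_t(L^q)}$ coming from the lower-order elliptic contribution.

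The main obstacle is the commutator/localization step under the weak hypothesis that $\r$ is only H\"older in $t$: no time derivative of $\r$ is available, so the temporal slab length $\tau$ must be tied to $\beta$ and $M_\beta(t)$ in order to make $(\r-\r_j)\partial_t\u$ absorbable in $L^p_t L^q_x$. Tracking all constants carefully through the summation over patches and the interpolation of the commutator terms produces the exponents $\tilde\varsigma,r^\ast$ and the explicit dependence on $\xi_\r$, $\|\nabla\r\|_{L^r}$, and $M_\beta(t)$ displayed in \eqref{T41}--\eqref{T42}.
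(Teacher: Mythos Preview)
The paper does not actually prove this theorem: it is stated as a recall of Theorem~3.7 in Danchin~\cite{D}, and the subsequent Remark explicitly refers the reader to \cite{D} for details. Your proposal is a faithful outline of Danchin's own argument---constant-coefficient maximal regularity, spatial localization via a partition of unity with patch size tied to $\|\nabla\r\|_{L^r}$, temporal slabs with length tied to the $C^\beta_t$ modulus, and absorption of $(\r-\r_j)\partial_t\u$---so in substance you are reproducing the cited proof rather than offering an alternative. There is nothing to compare against in the present paper.
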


\begin{Remark}
The reader can also refer to Theorem 3.7 in \cite{D} for more details
about Theorem \ref{T4}. We notice that
\eqref{T41} and \eqref{T42} do not include the estimate for
$\|\u\|_{L^p_T(L^q)}$. Indeed, since we consider only in a bounded
domain $\Omega$, then there exists a constant $C=C(q, d(\Omega))$
such that
$$\|\u\|_{W^{2,q}}\equiv \|\nabla^2\u\|_{L^q}+d(\Omega)^{-1}\|\nabla\u\|_{L^q}+d(\Omega)^{-2}\|\u\|_{L^q}\le C\|\nabla^2\u\|_{L^q},$$
whenever $\u\in W^{2,q}(\Omega)\cap W_0^{1,q}(\Omega)$\ (cf.
Proposition 2.4 in \cite{D}).
\end{Remark}

We also have the following two interpolation inequalities for the
$L^\infty$ estimates in the spatial variable (cf. Lemma 4.1 in
\cite{D}, also Lemmas 3.1,3.3 in \cite{LXWD}) which are useful in our proof.

\begin{Lemma}\label{l1}
Let $1<p,q,r,s<\infty$ satisfy
$$0<\f{p}{2}-\f{3p}{2q}<1, \quad \f{1}{s}=\f{1}{r}+\f{1}{q}.$$
 Then the  following inequalities hold:
$$\|\nabla f\|_{L^p_T (L^\infty)}\le
CT^{\f{1}{2}-\f{3}{2q}}\|f\|^{1-\theta}_{L^\infty_T(
D_{A_q}^{1-\f{1}{p},p})}\|f\|^\theta_{L^p_T (W^{2,q})},$$
$$\|\nabla f\|_{L^p_T(L^r)}\le
CT^{\f{1}{2}-\f{3}{2q}}\|f\|^{1-\theta}_{L^\infty_T(
D_{A_s}^{1-\f{1}{p},p})}\|f\|^\theta_{L^p_T(W^{2,s})},$$ for some
constant $C$ depending only on $\O, p, q, r$,  and
$$\f{1-\theta}{p}=\f{1}{2}-\f{3}{2q}.$$
\end{Lemma}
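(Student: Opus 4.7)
The plan is to prove both inequalities in two steps: first a \emph{spatial} interpolation pointwise in $t$, then a H\"older application in time to produce the $T^{\frac{1}{2}-\frac{3}{2q}}$ prefactor.

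For the spatial step, recall that $D_{A_q}^{1-\frac{1}{p},p}$ is a real-interpolation space between $L^q$ and the Stokes domain $\approx W^{2,q}\cap W^{1,q}_0$, of fractional regularity $2-\frac{2}{p}$ in $L^q$. Real interpolation between $D_{A_q}^{1-\frac{1}{p},p}$ and $W^{2,q}$ with exponent $(\theta,1)$, where $\theta$ is chosen so that $\frac{1-\theta}{p}=\frac{1}{2}-\frac{3}{2q}$, lands in the Besov space $B^{\sigma}_{q,1}$ with $\sigma=2-\frac{2(1-\theta)}{p}=1+\frac{3}{q}$. Combined with the critical embedding $B^{1+3/q}_{q,1}\hookrightarrow W^{1,\infty}$ in $\R^3$, this yields
$$\|\nabla f(t)\|_{L^\infty}\le C\|f(t)\|^{1-\theta}_{D_{A_q}^{1-\frac{1}{p},p}}\|f(t)\|^\theta_{W^{2,q}},$$
and the hypothesis $0<\frac{p}{2}-\frac{3p}{2q}<1$ guarantees $0<\theta<1$. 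For the second inequality, I would run the same scheme with $s$ in place of $q$ and use $W^{1+3/q,s}\hookrightarrow W^{1,r}$ (critical Sobolev embedding, valid precisely because $\frac{1}{s}=\frac{1}{r}+\frac{1}{q}$) to obtain the analogous pointwise-in-time estimate for $\|\nabla f(t)\|_{L^r}$.

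For the temporal step, raise the pointwise inequality to the $p$-th power, pull out $\|f\|^{p(1-\theta)}_{L^\infty_T(D_{A_q}^{1-\frac{1}{p},p})}$, and apply H\"older with exponents $\frac{1}{\theta}$ and $\frac{1}{1-\theta}$ to get
$$\int_0^T\|f(t)\|^{p\theta}_{W^{2,q}}\,dt\le T^{1-\theta}\|f\|^{p\theta}_{L^p_T(W^{2,q})}.$$
Taking $p$-th roots produces the factor $T^{(1-\theta)/p}=T^{\frac{1}{2}-\frac{3}{2q}}$, which gives the first inequality. The second inequality follows identically.

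The one delicate point is the spatial embedding at the borderline regularity $\sigma=1+\frac{3}{q}$: ordinary Sobolev $W^{\sigma,q}\hookrightarrow W^{1,\infty}$ just fails, so the argument must pass through the finer Besov space $B^{\sigma}_{q,1}$ supplied by the $(\cdot,\cdot)_{\theta,1}$ real-interpolation functor, whose critical embedding into $W^{1,\infty}$ is sharp. Since Lemma 4.1 of \cite{D} and Lemmas 3.1, 3.3 of \cite{LXWD} establish exactly such interpolation estimates in directly analogous settings, the cleanest route is to invoke those results rather than redo the Besov machinery in detail.
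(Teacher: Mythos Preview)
Your proposal is correct and matches the paper's treatment: the paper does not actually prove this lemma but cites Lemma~4.1 in \cite{D} and Lemmas~3.1,~3.3 in \cite{LXWD}, exactly as you do at the end. Your spatial-interpolation-then-H\"older-in-time scheme is also the template the paper uses explicitly for the companion Lemmas~\ref{l2'} and~\ref{l3}; the only cosmetic difference is that those proofs first embed $D_{A_q}^{1-\frac{1}{p},p}\hookrightarrow B^{2-\frac{2}{p}-\frac{3}{q}}_{\infty,\infty}$ and $W^{2,q}\hookrightarrow B^{2-\frac{3}{q}}_{\infty,\infty}$ and then interpolate in the $B^{\cdot}_{\infty,\infty}$ scale to reach $B^1_{\infty,1}$, whereas you interpolate in the $B^{\cdot}_{q,1}$ scale and then embed $B^{1+3/q}_{q,1}\hookrightarrow W^{1,\infty}$---both routes land at the same critical embedding and are equivalent.
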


Similarly, we can prove
\begin{Lemma}\label{l2'}
Let $1<p,q,r,s<\infty$ satisfy $$0<\f{2p}{3}-\f{p}{q}<1, \quad
\f{1}{s}=\f{1}{r}+\f{1}{q}.$$ Then
$$\|\nabla f\|_{L^p_T(L^\infty)}\le
CT^{\f{2}{3}-\f{1}{q}}\|f\|^{1-\theta}_{L^\infty_T(
B_{q,p}^{3(1-\f{1}{p})})} \|f\|^\theta_{L^p_T (W^{3,q})},$$
$$\|\nabla f\|_{L^p_T(L^r)}\le
CT^{\f{2}{3}-\f{1}{q}}\|f\|^{1-\theta}_{L^\infty_T(
B_{s,p}^{3(1-\f{1}{p})})}\|f\|^\theta_{L^p_T(W^{3,s})},$$
 for some
constant $C$ depending only on $\O, p, q,r$,  and
$$\f{1-\theta}{p}=\f{2}{3}-\f{1}{q}.$$
\end{Lemma}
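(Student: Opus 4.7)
The plan is to follow the same two-step strategy used to prove Lemma \ref{l1}: first establish a Gagliardo--Nirenberg-type interpolation inequality in the spatial variable alone, and then integrate in $t$ and apply H\"older's inequality to extract the power of $T$. The role that $W^{2,q}$ played in Lemma \ref{l1} is now played by $W^{3,q}$, which accounts for the change in the temporal exponent from $\frac{1}{2}-\frac{3}{2q}$ to $\frac{2}{3}-\frac{1}{q}$.

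For the first estimate, the pointwise-in-$t$ spatial step amounts to proving
$$\|\nabla f(t)\|_{L^\infty(\Omega)}\le C\,\|f(t)\|_{W^{3,q}(\Omega)}^{\theta}\,\|f(t)\|_{B_{q,p}^{3(1-1/p)}(\Omega)}^{1-\theta},\qquad \theta=1-p\bigl(\tfrac{2}{3}-\tfrac{1}{q}\bigr).$$
After extending $f$ from the smooth domain $\Omega$ to $\R^3$ by a bounded Besov/Sobolev extension operator, I would invoke real interpolation. Since $B_{q,p}^{3(1-1/p)}=(L^q,W^{3,q})_{1-1/p,p}$, the reiteration theorem identifies $(B_{q,p}^{3(1-1/p)},W^{3,q})_{\theta,1}$ with the Besov space $B_{q,1}^{s_\theta}$ of regularity
$$s_\theta=\theta\cdot 3+(1-\theta)\cdot 3\bigl(1-\tfrac{1}{p}\bigr)=3-\tfrac{3(1-\theta)}{p}.$$
The condition $\tfrac{1-\theta}{p}=\tfrac{2}{3}-\tfrac{1}{q}$ is precisely what makes $s_\theta=1+\tfrac{3}{q}$, which is the critical regularity for the Besov embedding $B_{q,1}^{1+3/q}\hookrightarrow W^{1,\infty}$. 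Combining these two facts yields the displayed spatial inequality, and the hypothesis $0<\tfrac{2p}{3}-\tfrac{p}{q}<1$ guarantees $\theta\in(0,1)$.

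The temporal step is then routine: raising the spatial inequality to the $p$-th power, integrating over $(0,T)$, pulling the $L^\infty_T(B_{q,p}^{3(1-1/p)})$ factor out of the time integral, and applying H\"older's inequality with exponents $1/\theta$ and $1/(1-\theta)$ to the remaining factor gives
$$\int_0^T\|f(t)\|_{W^{3,q}}^{p\theta}\,dt\le T^{1-\theta}\,\|f\|_{L^p_T(W^{3,q})}^{p\theta}.$$
Taking the $p$-th root produces the announced bound, with the H\"older factor contributing exactly $T^{(1-\theta)/p}=T^{\frac{2}{3}-\frac{1}{q}}$.

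For the second inequality the strategy is identical, with the endpoint $L^\infty$ on the left replaced by $L^r$ and with $q$ in the interpolation replaced by $s$. The relation $\tfrac{1}{s}=\tfrac{1}{r}+\tfrac{1}{q}$ ensures that the regularity $1+\tfrac{3}{s}-\tfrac{3}{r}=1+\tfrac{3}{q}$ needed for the embedding into $W^{1,r}$ still equals the interpolation regularity $s_\theta$ for the \emph{same} value of $\theta$, so both the spatial interpolation and the temporal H\"older step go through verbatim. The main obstacle is therefore not analytic but is the bookkeeping of interpolation indices and verifying that the reiteration step produces a Besov space with summation index $1$, since the Besov embedding into $L^\infty$ (or $L^r$ at the critical level) fails without this sharpening of the summation exponent.
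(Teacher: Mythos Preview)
Your argument is correct and reaches the same conclusion, but the order of operations differs from the paper's proof. You interpolate first between $B_{q,p}^{3(1-1/p)}$ and $W^{3,q}$ (in the $L^q$-based scale) to land in $B_{q,1}^{1+3/q}$, and then invoke the Sobolev-type embedding $B_{q,1}^{1+3/q}\hookrightarrow W^{1,\infty}$. The paper instead embeds first, sending $B_{q,p}^{3(1-1/p)}\hookrightarrow B_{\infty,\infty}^{3-3/p-3/q}$ and $W^{3,q}\hookrightarrow B_{\infty,\infty}^{3-3/q}$, and then interpolates between these $L^\infty$-based Besov spaces to reach $B_{\infty,1}^1\hookrightarrow W^{1,\infty}$. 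For the second inequality the paper likewise passes through $B_{r,\cdot}$-spaces before interpolating, while you stay in the $s$-scale and embed at the end. Both routes rely on the same ingredients (Besov interpolation, the critical embedding with summation index $1$, and H\"older in time), and your bookkeeping of $\theta$ and the resulting power of $T$ is correct. One small point worth making precise: your reiteration step uses $W^{3,q}$ as an endpoint, which is cleanest to justify via the sandwich $B_{q,1}^3\hookrightarrow W^{3,q}\hookrightarrow B_{q,\infty}^3$ together with the standard Besov interpolation $(B_{q,p_0}^{s_0},B_{q,p_1}^{s_1})_{\theta,1}=B_{q,1}^{(1-\theta)s_0+\theta s_1}$, rather than citing reiteration directly.
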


\begin{proof}
The proof is based on the applications of embedding and
interpolation results in \cite{BL}. First, we notice that, from
Theorem 6.4.5 in \cite{BL},
$$(B^{3-\f{3}{p}-\f{3}{q}}_{\infty,\infty}, \
B^{3-\f{3}{q}}_{\infty,\infty})_{\theta,1}=B^1_{\infty,1}\ \
\textrm{with} \ \  \f{1-\theta}{p}=\f{2}{3}-\f{1}{q},$$ and the
embedding (Theorem 6.2.4 in \cite{BL}):
$B^1_{\infty,1}\hookrightarrow W^{1,\infty}$, we get
\begin{equation}\label{5888}
\|\nabla f\|_{L^\infty}\le C\|
f\|^{\theta}_{B^{3-\f{3}{q}}_{\infty,\infty}}\|
f\|^{1-\theta}_{B^{3-\f{3}{p}-\f{3}{q}}_{\infty,\infty}}.
\end{equation}
We remark that
\begin{equation}\label{588}
B_{q,p}^{3(1-\f{1}{p})}\hookrightarrow
B_{\infty,\infty}^{3-\f{3}{p}-\f{3}{q}},\quad W^{3,q}\hookrightarrow
B^3_{q,\infty}\hookrightarrow B_{\infty,\infty}^{3-\f{3}{q}}
\end{equation}
(cf. Theorems 6.5.1 and  6.2.4 in \cite{BL}). Therefore, according
to \eqref{5888}, \eqref{588} and by H\"{o}lder's inequality, we
deduce that
\begin{equation*}
\begin{split}
\|\nabla f\|_{L^p_T( L^\infty)}&\le C\left(\int_0^T\|
f\|^{p(1-\theta)}_{B^{3-\f{3}{p}-\f{3}{q}}_{\infty,\infty}}\|
f\|^{p\theta}_{B^{3-\f{3}{q}}_{\infty,\infty}}dt\right)^{\f{1}{p}}
\le C\left(\int_0^T\|f\|^{p(1-\theta)}_{B_{q,p}^{3(1-\f{1}{p})}}\|f\|^{p\theta}_{W^{3,q}}dt\right)^{\f{1}{p}}\\
&\le CT^{\f{2}{3}-\f{1}{q}}\|f\|^{1-\theta}_{L^\infty_T(
B_{q,p}^{3(1-\f{1}{p})})}\|f\|^\theta_{L^p_T(W^{3,q})}.
\end{split}
\end{equation*}
The proof of the second inequality is based on the fact that
$$B^0_{r,1}=(B_{r,p}^{2-\frac{3}{p}-\frac{3}{q}}, B_{r,r}^{2-\frac{3}{q}})_{\theta,1}\hookrightarrow L^r\ \
\textrm{with} \ \  \f{1-\theta}{p}=\f{2}{3}-\f{1}{q}$$ (cf. Theorem
6.4.5 and Theorem 6.2.4 in \cite{BL}),  and that
$$W^{2,s}\hookrightarrow B_{r,r}^{2-\frac{3}{q}},\quad  B_{s,p}^{2-\f{3}{p}}\hookrightarrow B_{r,p}^{2-\frac{3}{p}-\frac{3}{q}},$$
(cf. Theorems 6.2.4 and 6.5.1 in \cite{BL}). In fact, by
H\"{o}lder's inequality, we have
\begin{equation*}
\begin{split}
\|\nabla f\|_{L^p_T( L^r)}&\le C\left(\int_0^T\|\nabla
f\|^{p(1-\theta)}_{B^{2-\f{3}{p}-\f{3}{q}}_{r,p}}\|\nabla
f\|^{p\theta}_{B^{2-\f{3}{q}}_{r,r}}dt\right)^{\f{1}{p}}
\le C\left(\int_0^T\|\nabla f\|^{p(1-\theta)}_{B_{s,p}^{2-\f{3}{p}}}\|\nabla f\|^{p\theta}_{W^{2,s}}dt\right)^{\f{1}{p}}\\
&\le CT^{\f{2}{3}-\f{1}{q}}\|f\|^{1-\theta}_{L^\infty_T(
B_{s,p}^{3(1-\f{1}{p})})}\|f\|^\theta_{L^p_T( W^{3,s})}.
\end{split}
\end{equation*}
\end{proof}
\begin{Lemma}\label{l3}
Let $1<p,q,r,s<\infty$ satisfy $$0<\f{p}{3}-\f{p}{q}<1,\quad
\f{1}{s}=\f{1}{r}+\f{1}{q}.$$ Then
$$\|\nabla^2 f\|_{L^p_T( L^\infty)}\le
CT^{\f{1}{3}-\f{1}{q}}\|f\|^{1-\theta}_{L^\infty_T(
B_{q,p}^{3(1-\f{1}{p})})} \|f\|^\theta_{L^p_T(W^{3,q})},$$
$$\|\nabla^2 f\|_{L^p_T( L^r)}\le
CT^{\f{1}{3}-\f{1}{q}}\|f\|^{1-\theta}_{L^\infty_T(
B_{s,p}^{3(1-\f{1}{p})})} \|f\|^\theta_{L^p_T(W^{3,s})},$$
 for some
constant $C$ depending only on $\O, p, q,r$, and
$$\f{1-\theta}{p}=\f{1}{3}-\f{1}{q}.$$
\end{Lemma}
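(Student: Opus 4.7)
The statement is the natural ``two-derivative'' analogue of Lemma \ref{l2'}, so the plan is to mimic that proof with every Besov smoothness index shifted up or down by one. The scheme remains: Besov real interpolation to produce a target space embedded in $L^{\infty}$ (respectively $L^{r}$), classical Sobolev/Besov embeddings to bring $W^{3,q}$ (respectively $W^{3,s}$) and $B_{q,p}^{3(1-1/p)}$ (respectively $B_{s,p}^{3(1-1/p)}$) into the interpolation endpoints, and a final Hölder in time to extract the power of $T$.

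For the first inequality I would apply Theorem 6.4.5 in \cite{BL} to obtain
$$\bigl(B^{3-\f{3}{p}-\f{3}{q}}_{\infty,\infty},\ B^{3-\f{3}{q}}_{\infty,\infty}\bigr)_{\theta,1}=B^{2}_{\infty,1}, \qquad \f{1-\theta}{p}=\f{1}{3}-\f{1}{q},$$
where the smoothness identity $(1-\theta)(3-\f{3}{p}-\f{3}{q})+\theta(3-\f{3}{q})=2$ forces the stated relation for $\theta$. Combined with the embedding $B^{2}_{\infty,1}\hookrightarrow W^{2,\infty}$ (Theorem 6.2.4 in \cite{BL}), this gives the pointwise-in-time bound
$$\|\nabla^{2}f\|_{L^{\infty}}\le C\,\|f\|^{1-\theta}_{B^{3-\f{3}{p}-\f{3}{q}}_{\infty,\infty}}\|f\|^{\theta}_{B^{3-\f{3}{q}}_{\infty,\infty}}.$$
Using the two embeddings $B_{q,p}^{3(1-\f{1}{p})}\hookrightarrow B_{\infty,\infty}^{3-\f{3}{p}-\f{3}{q}}$ and $W^{3,q}\hookrightarrow B^{3}_{q,\infty}\hookrightarrow B_{\infty,\infty}^{3-\f{3}{q}}$ already invoked in the proof of Lemma \ref{l2'}, raising to the $p$-th power, integrating in $t\in(0,T)$, and applying Hölder with conjugate exponents $(1/\theta,1/(1-\theta))$ to pull the $L^{\infty}_{T}$-factor out of the integrand yields exactly the weight $T^{(1-\theta)/p}=T^{\f{1}{3}-\f{1}{q}}$ and the first inequality.

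For the second inequality I would shift the indices further and invoke
$$B^{0}_{r,1}=\bigl(B^{1-\f{3}{p}-\f{3}{q}}_{r,p},\ B^{1-\f{3}{q}}_{r,r}\bigr)_{\theta,1}\hookrightarrow L^{r}, \qquad \f{1-\theta}{p}=\f{1}{3}-\f{1}{q},$$
coupled with the Sobolev-type embeddings $W^{3,s}\hookrightarrow B^{3-\f{3}{q}}_{r,r}$ and $B_{s,p}^{3(1-\f{1}{p})}\hookrightarrow B_{r,p}^{3-\f{3}{p}-\f{3}{q}}$, both justified by $\f{1}{r}=\f{1}{s}-\f{1}{q}$ through Theorems 6.5.1 and 6.2.4 in \cite{BL}. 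Applying these inclusions to $\nabla^{2}f$ gives the pointwise-in-time interpolation estimate in the $L^{r}$-target, and repeating the Hölder-in-time step as above closes the argument. There is no genuine obstacle: the hypothesis $0<\f{p}{3}-\f{p}{q}<1$ is precisely the requirement $\theta\in(0,1)$, making the interpolation parameter admissible, and the only bookkeeping is to verify the smoothness arithmetic at each real-interpolation and embedding step.
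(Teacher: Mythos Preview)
Your proof is correct and follows essentially the same route as the paper: real interpolation of Besov spaces (Theorem~6.4.5 in \cite{BL}) combined with the embeddings of Theorems~6.2.4 and~6.5.1, followed by H\"older in time. The only cosmetic difference is that the paper applies the interpolation to $\nabla f$ targeting $B^{1}_{\infty,1}\hookrightarrow W^{1,\infty}$ (with endpoints $B^{2-\frac{3}{p}-\frac{3}{q}}_{\infty,\infty}$ and $B^{2-\frac{3}{q}}_{\infty,\infty}$), whereas you interpolate $f$ itself targeting $B^{2}_{\infty,1}\hookrightarrow W^{2,\infty}$; these are equivalent up to a one-derivative shift and yield the same $\theta$.
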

\begin{proof}
First, we notice that (cf. Theorem 6.4.5 in
\cite{BL})
$$(B^{2-\f{3}{p}-\f{3}{q}}_{\infty,\infty}, B^{2-\f{3}{q}}_{\infty,\infty})_{\theta,1}=B^1_{\infty,1}\
\textrm{with}\ \f{1-\theta}{p}=\f{1}{3}-\f{1}{q}.$$ Hence,
\begin{equation}\label{5}
\|\nabla^2 f\|_{L^\infty}\le C\|\nabla f\|_{W^{1, \infty}}\le
C\|\nabla f\|_{B^1_{\infty,1}}\le C\|\nabla
f\|^{\theta}_{B^{2-\f{3}{q}}_{\infty,\infty}}\|\nabla
f\|^{1-\theta}_{B^{2-\f{3}{p}-\f{3}{q}}_{\infty,\infty}}.
\end{equation}
We remark that (cf. Theorems 6.2.4 and 6.5.1 in \cite{BL})
$$B_{q,p}^{3(1-\f{1}{p})}\hookrightarrow
B_{\infty,\infty}^{3-\f{3}{p}-\f{3}{q}}, \quad
W^{2,q}\hookrightarrow B_{q,\infty}^2\hookrightarrow
B_{\infty,\infty}^{2-\f{3}{q}}.$$ Thus, according to \eqref{5} and
by applying H\"{o}lder's inequality, we deduce that
\begin{equation*}
\begin{split}
\|\nabla^2 f\|_{L^p_T(L^\infty)}&\le C\left(\int_0^T\|\nabla
f\|^{p\theta}_{B^{2-\f{3}{q}}_{\infty,\infty}}\|\nabla
f\|^{p(1-\theta)}_{B^{2-\f{3}{p}-\f{3}{q}}_{\infty,\infty}}dt\right)^{\f{1}{p}}\\
&\le
C\left(\int_0^T\|f\|^{p\theta}_{W^{3,q}}\|f\|^{p(1-\theta)}_{B_{q,p}^{3(1-\f{1}{p})}}dt\right)^{\f{1}{p}}\\
&\le CT^{\f{1}{3}-\f{1}{q}}\|f\|^{1-\theta}_{L^\infty_T(
B_{q,p}^{3(1-\f{1}{p})})}\|f\|^\theta_{L^p_T(W^{3,q})}.
\end{split}
\end{equation*}
The proof of the second inequality is based on the fact that
$$B^0_{r,1}=(B_{r,p}^{1-\frac{3}{p}-\frac{3}{q}}, B_{r,r}^{1-\frac{3}{q}})_{\theta,1}\hookrightarrow L^r\ \
\textrm{with} \ \  \f{1-\theta}{p}=\f{1}{3}-\f{1}{q}$$ (cf. Theorem
6.4.5 and Theorem 6.2.4 in \cite{BL}),  and that
$$W^{1,s}\hookrightarrow B_{r,r}^{1-\frac{3}{q}},\quad  B_{s,p}^{1-\f{3}{p}}\hookrightarrow B_{r,p}^{1-\frac{3}{p}-\frac{3}{q}},$$
(cf. Theorems 6.2.4 and 6.5.1 in \cite{BL}). In fact, by
H\"{o}lder's inequality, we have
\begin{equation*}
\begin{split}
\|\nabla f\|_{L^p_T( L^r)}
&\le C\left(\int_0^T\|\nabla
f\|^{p(1-\theta)}_{B^{1-\f{3}{p}-\f{3}{q}}_{r,p}}\|\nabla
f\|^{p\theta}_{B^{1-\f{3}{q}}_{r,r}}dt\right)^{\f{1}{p}}\\
& \le C\left(\int_0^T\|\nabla f\|^{p(1-\theta)}_{B_{s,p}^{1-\f{3}{p}}}\|\nabla f\|^{p\theta}_{W^{1,s}}dt\right)^{\f{1}{p}}\\
&\le CT^{\f{1}{3}-\f{1}{q}}\|f\|^{1-\theta}_{L^\infty_T(
B_{s,p}^{2-\f{3}{p}})}\|f\|^\theta_{L^p_T( W^{2,s})}.
\end{split}
\end{equation*}
\end{proof}

\bigskip

%%%%%%%%%%%%%%%%%%%%%
\section{Local Existence}

In this section, we prove existence and uniqueness of strong
solution on a short time interval, i.e., the local strong solution
in Theorem \ref{T1}. The proof will be divided into several steps,
including constructing the approximate solutions by iteration,
obtaining the uniform estimates, showing the convergence, consistency
and uniqueness.

\subsection{Construction of approximate solutions}
We initialize the construction of approximate solutions by setting
$\r^0:=\r_0, \u^0:=\u_0$ and $\d^0:=\d_0$. Given $(\r^n,\u^n, P^n,
\d^n )$, Proposition \ref{P1}, Theorem \ref{T3} and Theorem \ref{T4}
enable us to define respectively $\r^{n+1}(\x,t)$ as the (global)
solution of the transport equation

\begin{equation}\label{r}
\begin{cases}
\partial_t\r^{n+1}+\u^n\cdot\nabla\r^{n+1}=0,\\
\r^{n+1}\mid_{t=0}=\r_0,
\end{cases}
\end{equation}
$\d^{n+1}(\x,t)$ as the (global) solution of
\begin{equation}\label{d}
\begin{cases}
\partial_t\d^{n+1}-\D\d^{n+1}=-\u^{n}\cdot\nabla
\d^{n}+|\nabla\d^n|^2\d^n, \\
\d^{n+1}\mid_{t=0}=\d_0, \quad \partial_{\bf
\nu}\d^{n+1}|_{\partial\Omega}=0,
\end{cases}
\end{equation}
and $\left((\u^{n+1}(\x,t), P^{n+1}(\x,t)\right)$ as the (global)
solution of
\begin{equation}\label{e5}
\begin{cases}
\r^{n+1}\partial_t\u^{n+1}-\D\u^{n+1}+\nabla
P^{n+1}=-\r^{n+1}\u^n\cdot\nabla\u^n-\nabla\cdot\big((\nabla\d^n)^\top\nabla\d^n\big), \\
\Dv\u^{n+1}=0,\quad \int_{\O}P^{n+1}d\x=0,\\
 \u^{n+1}|_{t=0}=\u_0, \quad
\u^{n+1}|_{\partial\O}=0.
\end{cases}
\end{equation}
An argument by induction yields a sequence $\{(\r^n, \u^n, P^n, \d^n
)\}_{n\in {\mathbb N}} \subset M^{p,q,r}_T$ for all $T>0$.

\subsection{ Uniform estimate for some small fixed time $T_\ast$}   We aim
at finding a positive time $T_\ast$ independent of $n$ for which
$\{(\r^n, \u^n, P^n, \d^n)\}_{n\in {\mathbb N}} $ is uniformly
bounded in the space $M^{p,q,r}_{T_\ast}$. Applying Proposition
\ref{P1} to \eqref{r}, we get
\begin{equation}\label{508}
\|\r^{n+1}(t)\|_{W^{1,r}}\leq
e^{\int_0^t\|\nabla\u^n(\tau)\|_{L^\infty} d\tau}\|\r_0\|_{W^{1,r}}.
\end{equation}
and
\begin{equation}\label{5088}
\min_{\x\in \bar{\Omega}}\r^{n+1}(\x,t)=\check{\r}:=\min_{\x\in
\bar{\Omega}}\r_0(\x)\ \ \textrm{and}\ \ \max_{\x\in
\bar{\Omega}}\r^{n+1}(\x,t)=\hat{\r}:=\max_{\x\in \bar{\Omega}}\r_0(\x).
\end{equation}

%In addition, the existence of a unique solution $\r^{n+1}(\x,t)$ of
%\eqref{r} follows from the method of characteristics, and for all
%time $t$, we have
%\begin{equation}\label{5088}
%\min_{\x\in \bar{\Omega}}\r^n(\x,t)=\check{\r}:=\min_{\x\in
%\bar{\Omega}}\r_0(\x)\ \ \textrm{and}\ \ \max_{\x\in
%\bar{\Omega}}\r^n(\x,t)=\hat{\r}:=\max_{\x\in \bar{\Omega}}\r_0(\x).
%\end{equation}

Since $\partial_t\r^{n+1}=-\u^n\cdot\nabla\r^{n+1}$,  then by
H\"{o}lder's inequality, we have $$\partial_t\r^{n+1}\in
L^\infty_{\textrm{loc}}(\R^+; L^s(\Omega))$$ with $s=\frac{qr}{q+r}\ (s=q \
\textrm{if}\ r=\infty)$,\ and for $t\geq 0$,\
\begin{equation}\label{500}
\|\partial_t\r^{n+1}\|_{L_t^\infty(L^s)}\leq
\|\u^n\|_{L_t^\infty(L^q)} \|\nabla\r^{n+1}\|_{L_t^\infty(L^r)}.
\end{equation}

 In order to apply Theorem \ref{T4} to \eqref{e5}, we need to prove for some $\beta\in (0,1],\  \r^{n+1}\in
C^\beta([0,T]; L^\infty(\Omega))$. Actually, noticing that by
interpolation between  $L^\infty(0,T; W^{1,r}(\Omega))$ and
$W^{1,\infty}(0,T; L^s(\Omega))$, $\r^{n+1}$ belongs to
$C^\beta([0,T]; L^\infty(\Omega))$ whenever $\beta\in
(0,\frac{1-\frac{3}{r}}{1+\frac{3}{q}})$ and it holds that
\begin{equation}\label{5000}
\|\r^{n+1}\|_{C_t^\beta(L^\infty)}\leq C\left(
\|\r^{n+1}\|_{L_t^\infty(W^{1,r})}+\|\partial_t\r^{n+1}\|_{L_t^\infty(L^s)}\right).
\end{equation}
Here we have used Young's inequality.

Hence, applying Theorem \ref{T4} to \eqref{e5} yields
\begin{equation*}
\begin{split}
&\|\u^{n+1}(t)\|_{D_{A_q}^{1-\f{1}{p},p}}+\|\u^{n+1}\|_{L_t^p(W^{2,q})}+\|\partial_t\u^{n+1}\|_{L_t^p(L^q)}+\|P^{n+1}\|_{L_t^p(W^{1,q})}\\
&\le
Ce^{Ct\psi(t)}\left(\|\u_0\|_{D_{A_q}^{1-\f{1}{p},p}}+\|\u^n\cdot\nabla\u^n+\nabla\cdot((\nabla\d^n)^\top\nabla\d^n)\|_{L_t^p(L^q)}\right),
\end{split}
\end{equation*}
where
$$\psi(t)=\left(1+\|\r^{n+1}\|_{L_t^\infty(W^{1,r})}\right)^{\gamma_0}\left(1+\|\r^{n+1}\|_{C_t^\beta(L^\infty)}^\frac{1}{\beta}\right)$$
for some positive exponent $\gamma_0$ depending only on $p, q, r,
\beta$,  and the constant $C$ depending only on $p, q, r, \check{\r},
\hat{\r}, \Omega, \beta$. Using \eqref{500} and \eqref{5000}, we get
$$\psi(t)\leq
C\left(1+\|\r^{n+1}\|_{L_t^\infty(W^{1,r})}\right)^\sigma\left(1+\|\u^n\|_{L_t^\infty(L^q)}\right)^\frac{1}{\beta},$$
where $\sigma$ depends only on $p, q,  r \ \textrm{and}\ \beta$.
 Therefore,
\begin{equation}\label{300}
\begin{split}
&\|\u^{n+1}(t)\|_{D_{A_q}^{1-\f{1}{p},p}}+\|\u^{n+1}\|_{L_t^p(W^{2,q})}+\|\partial_t\u^{n+1}\|_{L_t^p(L^q)}+\|P^{n+1}\|_{L_t^p(W^{1,q})}\\
&\le
Ce^{Ct\left(1+\|\r^{n+1}\|_{L_t^\infty(W^{1,r})}\right)^\sigma\left(1+\|\u^n\|_{L_t^\infty(L^q)}\right)^\frac{1}{\beta}}\Big(\|\u_0\|_{D_{A_q}^{1-\f{1}{p},p}}
+\|\u^n\cdot\nabla\u^n\|_{L_t^p(L^q)}\\
&\qquad+\|\nabla\cdot((\nabla\d^n)^\top\nabla\d^n)\|_{L_t^p(L^q)}\Big).
\end{split}
\end{equation}
 Applying Theorem \ref{T3} to
\eqref{d},
 we obtain
\begin{equation}\label{4}
\begin{split}
&\|\d^{n+1}(t)\|_{B_{q,p}^{3(1-\f{1}{p})}}+\|\d^{n+1}\|_{\mathcal{W}_{q,p}(0,t)}\\
&\le C\left(\|\d_0\|_{B_{q,p}^{3(1-\f{1}{p})}}+\|-\u^{n}\cdot\nabla
\d^{n}+|\nabla\d^n|^2\d^n\|_{L^p_t(L^q)}\right).
\end{split}
\end{equation}
Define
\begin{equation*}
\begin{split}
U^n(t):=&\|\u^n\|_{L^\infty_t(D_{A_q}^{1-\f{1}{p},p})}+\|\u^n\|_{L^p_t(
W^{2,q})}+\|\partial_t\u^n\|_{L^p_t(L^q)}\\
&\hskip 2cm +\|\d^n\|_{L^\infty_t(
B_{q,p}^{3(1-\f{1}{p})})}+\|\d^n\|_{\mathcal{W}_{q,p}(0,t)},
\end{split}
\end{equation*}
$$U^0\:=\|\u_0\|_{D_{A_q}^{1-\f{1}{p},p}}+\|\d_0\|_{B_{q,p}^{3(1-\f{1}{p})}},$$
$$\varrho^n(t):=\|\r^n\|_{L_t^\infty(W^{1,r})}\quad \textrm{and}\quad \varrho_0:=\|\r_0\|_{W^{1,r}}.$$
Hence, from \eqref{508}, \eqref{300} and \eqref{4}, we have the estimates for different cases as follows.
\bigskip

{\em  Case 1.} \ $\frac{2}{3}-\frac{1}{q}<\frac{1}{p}$, using Lemmas
\ref{l1}-\ref{l3}, we get,
\begin{equation}\label{60}
\begin{split}
\varrho^{n+1}(t)\le
e^{t^{1-\frac{1}{p}}\|\nabla\u^n\|_{L_t^p(L^\infty)}}\|\r_0\|_{W^{1,r}}\leq
\varrho_0e^{Ct^{\frac{3}{2}-\frac{1}{p}-\frac{3}{2q}}U^n(t)},
\end{split}
\end{equation}
\begin{equation*}
\begin{split}
&\|\u^{n+1}(t)\|_{D_{A_q}^{1-\f{1}{p},p}}+\|\u^{n+1}\|_{L_t^p(W^{2,q})}+\|\partial_t\u^{n+1}\|_{L_t^p(L^q)}+\|P^{n+1}\|_{L_t^p(W^{1,q})}\\
&\le
Ce^{Ct\left(1+\|\r^{n+1}\|_{L_t^\infty(W^{1,r})}\right)^\sigma\left(1+\|\u^n\|_{L_t^\infty(L^q)}\right)^\frac{1}{\beta}}\Big(\|\u_0\|_{D_{A_q}^{1-\f{1}{p},p}}
+\|\u^n\|_{L_t^\infty(L^q)}\|\nabla\u^n\|_{L_t^p(L^\infty)}\\
&\qquad+\|\nabla\d^n\|_{L_t^\infty(L^q)}\|\triangle\d^n\|_{L_t^p(L^\infty)}\Big),
\end{split}
\end{equation*}
\begin{equation*}
\begin{split}
&\|\d^{n+1}(t)\|_{B_{q,p}^{3(1-\f{1}{p})}}+\|\d^{n+1}\|_{\mathcal{W}_{q,p}(0,t)}\\
&\leq C\Big(\|\d_0\|_{B_{q,p}^{3(1-\f{1}{p})}}+\|\u^n\|_{L^\infty_t(
L^q)}\|\nabla\d^n\|_{L^p_t(L^\infty)}+\|\d^n\|_{L^\infty_t(
L^\infty)}\|\nabla\d^n\|_{L^\infty_t(L^q)}\|\nabla\d^n\|_{L^p_t(
L^\infty)}\Big),
\end{split}
\end{equation*}
and
\begin{equation}\label{6}
\begin{split}
U^{n+1}(t)&\le
Ce^{Ct\big(1+\varrho^{n+1}(t)\big)^\sigma\big(1+U^n(t)\big)^{\frac{1}{\beta}}}\Big(U^0+(t^{\frac{1}{2}-\frac{3}{2q}}+t^{\frac{1}{3}-\frac{1}{q}}+t^{\frac{2}{3}-\frac{1}{q}})\big(U^n(t)\big)^2
\\
&\qquad+t^{\frac{2}{3}-\frac{1}{q}}\big(U^n(t)\big)^3\Big).
\end{split}
\end{equation}
%Inserting \eqref{60} in \eqref{6} yields
%\begin{equation*}
%\begin{split}
%&U^{n+1}(t)\leq C
%e^{Ct\big(1+U^n(t)\big)^{\frac{1}{\beta}}(1+\varrho_0)^\sigma
%e^{C\sigma
%t^{\frac{3}{2}-\frac{1}{p}-\frac{3}{2q}}U^n(t)}}\Big(U^0+(t^{\frac{1}{2}-\frac{3}{2q}}+t^{\frac{1}{3}-\frac{1}{q}}+t^{\frac{2}{3}-\frac{1}{q}})\big(U^n(t)\big)^2\\
%&\qquad\qquad\quad+t^{\frac{2}{3}-\frac{1}{q}}\big(U^n(t)\big)^3\Big).
%\end{split}
%\end{equation*}
 Assuming
that $t$ is sufficiently small so that
\begin{equation}\label{60'}
Ct^{\frac{3}{2}-\frac{1}{p}-\frac{3}{2q}}U^n(t)\leq\ln2,
\end{equation}
 we get
from \eqref{60} that
\begin{equation}\label{rr}
\varrho^{n+1}(t)\le2\varrho_0,
\end{equation}
 and from \eqref{6} that
\begin{equation*}%\label{60''''}
\begin{split}
U^{n+1}(t)&\leq C e^{2^\sigma C
t\big(1+U^n(t)\big)^{\frac{1}{\beta}}(1+\varrho_0)^\sigma}\Big(U^0+(t^{\frac{1}{2}-\frac{3}{2q}}
+t^{\frac{1}{3}-\frac{1}{q}}+t^{\frac{2}{3}-\frac{1}{q}})\big(U^n(t)\big)^2\\
&\qquad+t^{\frac{2}{3}-\frac{1}{q}}\big(U^n(t)\big)^3\Big).
\end{split}
\end{equation*}

{\em Case 2.} \ $\frac{2}{3}-\frac{1}{q}=\frac{1}{p}$, then $B_{q,p}^{3(1-\frac{1}{p})}\hookrightarrow W^{1,\infty-}$ with $\infty-$ denoting any positive number large enough (but not $\infty$), using Lemmas
\ref{l1} and \ref{l3}, we get
\begin{equation*}
\begin{split}
&\|\d^{n+1}(t)\|_{B_{q,p}^{3(1-\f{1}{p})}}+\|\d^{n+1}\|_{\mathcal{W}_{q,p}(0,t)}\\
&\leq
C\Big(\|\d_0\|_{B_{q,p}^{3(1-\f{1}{p})}}+t^{\frac{1}{p}}\|\u^n\|_{L^\infty_t(
L^{q+})}\|\nabla\d^n\|_{L^\infty_t(L^{\infty-})}\\
&\qquad\quad+t^{\frac{1}{p}}\|\d^n\|_{L^\infty_t(
L^\infty)}\|\nabla\d^n\|_{L^\infty_t(L^{q+})}\|\nabla\d^n\|_{L^\infty_t(
L^{\infty-})}\Big),
\end{split}
\end{equation*}
and
\begin{equation*}%\label{6'}
\begin{split}
U^{n+1}(t)&\le
Ce^{Ct\big(1+\varrho^{n+1}(t)\big)^\sigma\big(1+U^n(t)\big)^{\frac{1}{\beta}}}\Big(U^0+(t^{\frac{1}{2}-\frac{3}{2q}}+t^{\frac{1}{3}-\frac{1}{q}}+t^{\frac{1}{p}})\big(U^n(t)\big)^2\\
&\qquad+t^{\frac{1}{p}}\big(U^n(t)\big)^3\Big).
\end{split}
\end{equation*}
%Inserting \eqref{60} in \eqref{6'} yields
%\begin{equation}\label{60''}
%\begin{split}
%&U^{n+1}(t)\leq C
%e^{Ct\big(1+U^n(t)\big)^{\frac{1}{\beta}}(1+\varrho_0)^\sigma
%e^{C\sigma
%t^{\frac{3}{2}-\frac{1}{p}-\frac{3}{2q}}U^n(t)}}\Big(U^0+(t^{\frac{1}{2}-\frac{3}{2q}}+t^{\frac{1}{3}-\frac{1}{q}}+t^{\frac{1}{p}})\big(U^n(t)\big)^2\\
%&\qquad\qquad\quad+t^{\frac{1}{p}}\big(U^n(t)\big)^3\Big).
%\end{split}
%\end{equation}
 Assuming that $t$ is sufficiently small as that is in \eqref{60'}, we get
\eqref{rr} and
\begin{equation}\label{l}
\begin{split}
&U^{n+1}(t)\\
&\leq C e^{2^\sigma C
t\big(1+U^n(t)\big)^{\frac{1}{\beta}}(1+\varrho_0)^\sigma}\left(U^0+(t^{\frac{1}{2}-\frac{3}{2q}}
+t^{\frac{1}{3}-\frac{1}{q}}+t^{\frac{1}{p}})\big(U^n(t)\big)^2+t^{\frac{1}{p}}\big(U^n(t)\big)^3\right).
\end{split}
\end{equation}

{\em  Case 3.} \ $\frac{2}{3}-\frac{1}{q}>\frac{1}{p}>\frac{1}{2}-\frac{3}{2q}$, then $B_{q,p}^{3(1-\frac{1}{p})}\hookrightarrow W^{1,\infty}$, using Lemmas
\ref{l1} and \ref{l3}, we get
\begin{equation*}
\begin{split}
&\|\d^{n+1}(t)\|_{B_{q,p}^{3(1-\f{1}{p})}}+\|\d^{n+1}\|_{\mathcal{W}_{q,p}(0,t)}\\
&\leq
C\Big(\|\d_0\|_{B_{q,p}^{3(1-\f{1}{p})}}+t^{\frac{1}{p}}\|\u^n\|_{L^\infty_t(
L^{q})}\|\nabla\d^n\|_{L^\infty_t(L^{\infty})}\\
&\qquad\quad+t^{\frac{1}{p}}\|\d^n\|_{L^\infty_t(
L^\infty)}\|\nabla\d^n\|_{L^\infty_t(L^{q})}\|\nabla\d^n\|_{L^\infty_t(
L^{\infty})}\big)\Big),
\end{split}
\end{equation*}
and again, by choosing  $t$ sufficiently small as that is in
\eqref{60'}, \eqref{rr} and \eqref{l} follow.

{\em Case 4.} \ $\frac{1}{2}-\frac{3}{2q}=\frac{1}{p}$, then $D_{A_q}^{1-\frac{1}{p},p}\hookrightarrow W^{1,\infty-}$ and $B_{q,p}^{3(1-\frac{1}{p})}\hookrightarrow W^{1,\infty}$,
using Lemma \ref{l3}, we get,
\begin{equation}\label{60'''}
\begin{split}
\varrho^{n+1}(t)\leq \varrho_0e^{Ct^{1-\frac{1}{p}}U^n(t)},
\end{split}
\end{equation}
and
\begin{equation*}
\begin{split}
&\|\u^{n+1}(t)\|_{D_{A_q}^{1-\f{1}{p},p}}+\|\u^{n+1}\|_{L_t^p(W^{2,q})}+\|\partial_t\u^{n+1}\|_{L_t^p(L^q)}+\|P^{n+1}\|_{L_t^p(W^{1,q})}\\
&\le
Ce^{Ct\left(1+\|\r^{n+1}\|_{L_t^\infty(W^{1,r})}\right)^\sigma\left(1+\|\u^n\|_{L_t^\infty(L^q)}\right)^\frac{1}{\beta}}\Big(\|\u_0\|_{D_{A_q}^{1-\f{1}{p},p}}
+t^{\frac{1}{p}}\|\u^n\|_{L_t^\infty(L^{q+})}\|\nabla\u^n\|_{L_t^\infty(L^{\infty-})}\\
&\qquad+\|\nabla\d^n\|_{L_t^\infty(L^q)}\|\triangle\d^n\|_{L_t^p(L^\infty)}\Big),
\end{split}
\end{equation*}
\begin{equation}\label{6'''}
\begin{split}
&U^{n+1}(t)\\ &\le
Ce^{Ct\big(1+\varrho^{n+1}(t)\big)^\sigma\big(1+U^n(t)\big)^{\frac{1}{\beta}}}\left(U^0+(t^{\frac{1}{3}-\frac{1}{q}}+2t^{\frac{1}{p}})\big(U^n(t)\big)^2
+t^{\frac{1}{p}}\big(U^n(t)\big)^3\right).
\end{split}
\end{equation}
%Inserting \eqref{60'''} in \eqref{6'''} yields
%\begin{equation}\label{ll}
%\begin{split}
%&U^{n+1}(t)\leq C
%e^{Ct\big(1+U^n(t)\big)^{\frac{1}{\beta}}(1+\varrho_0)^\sigma
%e^{C\sigma
%t^{1-\frac{1}{p}}U^n(t)}}\left(U^0+(t^{\frac{1}{3}-\frac{1}{q}}+2t^{\frac{1}{p}})\big(U^n(t)\big)^2+t^{\frac{1}{p}}\big(U^n(t)\big)^3\right).
%\end{split}
%\end{equation}
 Assuming
that $t$ is sufficiently small so that
\begin{equation}\label{llll}
Ct^{1-\frac{1}{p}}U^n(t)\leq\ln2,
\end{equation}
 we get from \eqref{60'''} that \eqref{rr} holds and
\begin{equation}\label{lll}
\begin{split}
U^{n+1}(t)&\leq C e^{2^\sigma C
t\big(1+U^n(t)\big)^{\frac{1}{\beta}}(1+\varrho_0)^\sigma}\Big(U^0+(
t^{\frac{1}{3}-\frac{1}{q}}+2t^{\frac{1}{p}})\big(U^n(t)\big)^2+t^{\frac{1}{p}}\big(U^n(t)\big)^3\Big).
\end{split}
\end{equation}

{\em Case 5.} \ $\frac{1}{2}-\frac{3}{2q}>\frac{1}{p}>\frac{1}{3}-\frac{1}{q}$, then $D_{A_q}^{1-\frac{1}{p},p}\hookrightarrow W^{1,\infty}$ and $B_{q,p}^{3(1-\frac{1}{p})}\hookrightarrow W^{1,\infty}$,
using Lemma \ref{l3}, we get,
\begin{equation*}
\begin{split}
&\|\u^{n+1}(t)\|_{D_{A_q}^{1-\f{1}{p},p}}+\|\u^{n+1}\|_{L_t^p(W^{2,q})}+\|\partial_t\u^{n+1}\|_{L_t^p(L^q)}+\|P^{n+1}\|_{L_t^p(W^{1,q})}\\
&\le
Ce^{Ct\left(1+\|\r^{n+1}\|_{L_t^\infty(W^{1,r})}\right)^\sigma\left(1+\|\u^n\|_{L_t^\infty(L^q)}\right)^\frac{1}{\beta}}\Big(\|\u_0\|_{D_{A_q}^{1-\f{1}{p},p}}
+t^{\frac{1}{p}}\|\u^n\|_{L_t^\infty(L^q)}\|\nabla\u^n\|_{L_t^\infty(L^\infty)}\\
&\qquad+\|\nabla\d^n\|_{L_t^\infty(L^q)}\|\triangle\d^n\|_{L_t^p(L^\infty)}\Big),
\end{split}
\end{equation*}
and \eqref{6'''} follows. Moreover, by choosing $t$ sufficiently small as
  in \eqref{llll}, \eqref{rr} and \eqref{lll} follow.

{\em Case 6.} \
$\frac{1}{3}-\frac{1}{q}=\frac{1}{p}$, then
$D_{A_q}^{1-\frac{1}{p},p}\hookrightarrow W^{1,\infty}$ and
$B_{q,p}^{3(1-\frac{1}{p})}\hookrightarrow W^{2,\infty-}$, we get
\begin{equation*}
\begin{split}
&\|\u^{n+1}(t)\|_{D_{A_q}^{1-\f{1}{p},p}}+\|\u^{n+1}\|_{L_t^p(W^{2,q})}+\|\partial_t\u^{n+1}\|_{L_t^p(L^q)}+\|P^{n+1}\|_{L_t^p(W^{1,q})}\\
&\le
Ce^{Ct\left(1+\|\r^{n+1}\|_{L_t^\infty(W^{1,r})}\right)^\sigma\left(1+\|\u^n\|_{L_t^\infty(L^q)}\right)^\frac{1}{\beta}}\Big(\|\u_0\|_{D_{A_q}^{1-\f{1}{p},p}}
+t^{\frac{1}{p}}\|\u^n\|_{L_t^\infty(L^q)}\|\nabla\u^n\|_{L_t^\infty(L^\infty)}\\
&\qquad+t^{\frac{1}{p}}\|\nabla\d^n\|_{L_t^\infty(L^{q+})}\|\triangle\d^n\|_{L_t^p(L^{\infty-})}\Big),
\end{split}
\end{equation*}
and
\begin{equation}\label{6'''''}
\begin{split}
U^{n+1}(t)&\le
Ce^{Ct\big(1+\varrho^{n+1}(t)\big)^\sigma\big(1+U^n(t)\big)^{\frac{1}{\beta}}}\left(U^0+3t^{\frac{1}{p}}\big(U^n(t)\big)^2
+t^{\frac{1}{p}}\big(U^n(t)\big)^3\right).
\end{split}
\end{equation}
%Inserting \eqref{60'''} in \eqref{6'''''} yields
%\begin{equation}\label{rrr}
%\begin{split}
%&U^{n+1}(t)\leq C
%e^{Ct\big(1+U^n(t)\big)^{\frac{1}{\beta}}(1+\varrho_0)^\sigma
%e^{C\sigma
%t^{1-\frac{1}{p}}U^n(t)}}\left(U^0+3t^{\frac{1}{p}}\big(U^n(t)\big)^2+t^{\frac{1}{p}}\big(U^n(t)\big)^3\right).
%\end{split}
%\end{equation}
 Assuming that $t$ is sufficiently small as   in \eqref{llll}, we get
\eqref{rr} and
\begin{equation}\label{lllll}
\begin{split}
U^{n+1}(t)&\leq C e^{2^\sigma C
t\big(1+U^n(t)\big)^{\frac{1}{\beta}}(1+\varrho_0)^\sigma}\Big(U^0+
3t^{\frac{1}{p}}\big(U^n(t)\big)^2+t^{\frac{1}{p}}\big(U^n(t)\big)^3\Big).
\end{split}
\end{equation}

{\em  Case 7.} \
$\frac{1}{3}-\frac{1}{q}>\frac{1}{p}$, then
$D_{A_q}^{1-\frac{1}{p},p}\hookrightarrow W^{1,\infty}$ and
$B_{q,p}^{3(1-\frac{1}{p})}\hookrightarrow W^{2,\infty}$, we get
\begin{equation*}
\begin{split}
&\|\u^{n+1}(t)\|_{D_{A_q}^{1-\f{1}{p},p}}+\|\u^{n+1}\|_{L_t^p(W^{2,q})}+\|\partial_t\u^{n+1}\|_{L_t^p(L^q)}+\|P^{n+1}\|_{L_t^p(W^{1,q})}\\
&\le
Ce^{Ct\left(1+\|\r^{n+1}\|_{L_t^\infty(W^{1,r})}\right)^\sigma\left(1+\|\u^n\|_{L_t^\infty(L^q)}\right)^\frac{1}{\beta}}\Big(\|\u_0\|_{D_{A_q}^{1-\f{1}{p},p}}
+t^{\frac{1}{p}}\|\u^n\|_{L_t^\infty(L^q)}\|\nabla\u^n\|_{L_t^\infty(L^\infty)}\\
&\qquad+t^{\frac{1}{p}}\|\nabla\d^n\|_{L_t^\infty(L^q)}\|\triangle\d^n\|_{L_t^p(L^{\infty})}\Big),
\end{split}
\end{equation*}
and \eqref{6'''''} follows. Assuming that $t$ is sufficiently small
as   in \eqref{llll}, then \eqref{rr} and \eqref{lllll} hold.

 Hence, for Cases 1, 2, and 3, if we assume that $U^n(t)\le 4CU^0$ on $[0,T_\ast]$ with
\begin{equation}\label{TTT1}
\begin{split}
T_\ast=&\min\left\{
\left(\frac{\ln2}{4C^2U^0}\right)^{\frac{3pq-2q-3p}{2pq}},
\frac{\ln2}{2^\sigma C(1+\varrho_0)^\sigma
\left(1+4CU^0\right)^{\frac{1}{\beta}}} , \right.\\
&\qquad\qquad \left. \left(\frac{1}{16C^2U^0(3+4CU^0)}\right)^{\frac{3q}{q-3}}\right\}\leq1
\end{split}
\end{equation}
or
\begin{equation}\label{TTT2}
\begin{split}
 1<T_\ast=&\min\left\{
\left(\frac{\ln2}{4C^2U^0}\right)^{\frac{3pq-2q-3p}{2pq}},
\frac{\ln2}{2^\sigma C(1+\varrho_0)^\sigma
\left(1+4CU^0\right)^{\frac{1}{\beta}}} ,\right.\\
&\qquad\qquad \left. \left(\frac{1}{16C^2U^0(3+4CU^0)}\right)^{\max\{p,\frac{3q}{2q-3}\}}\right\},
\end{split}
\end{equation}
and, for Cases 4, 5, 6, and 7, if we assume that $U^n(t)\le 4CU^0$
on $[0,T_\ast]$ with
\begin{equation}\label{TTT1'}
\begin{split}
T_\ast=&\min\left\{
\left(\frac{\ln2}{4C^2U^0}\right)^{\frac{p}{p-1}},
\frac{\ln2}{2^\sigma C(1+\varrho_0)^\sigma
\left(1+4CU^0\right)^{\frac{1}{\beta}}} ,\right.\\
&\qquad\qquad \left. \left(\frac{1}{16C^2U^0(3+4CU^0)}\right)^{\max\{p,\frac{3q}{q-3}\}}\right\}\leq1
\end{split}
\end{equation}
or
\begin{equation}\label{TTT2'}
\begin{split}
1<T_\ast=&\min\left\{
\left(\frac{\ln2}{4C^2U^0}\right)^{\frac{p}{p-1}},
\frac{\ln2}{2^\sigma C(1+\varrho_0)^\sigma
\left(1+4CU^0\right)^{\frac{1}{\beta}}} , \right.\\
&\qquad\qquad \left. \left(\frac{1}{16C^2U^0(3+4CU^0)}\right)^p\right\},
\end{split}
\end{equation}
 then a direct computation yields
\begin{equation*}
U^{n+1}(t)\le 4CU^0 \ \ \textrm{on}\ \ [0,T_\ast].
\end{equation*}
Coming back to \eqref{60}, we conclude that the sequence $\{(\r^n,
\u^n, P^n, \d^n)\}$ is uniformly bounded in $M^{p,q,r}_{T_\ast}$.
More precisely, we have proved the following estimates:

\begin{Lemma}\label{41}
For all $t\in [0,T_\ast]$ with $T_\ast$ satisfying   \eqref{TTT1} or \eqref{TTT2} for Cases 1-3,
and \eqref{TTT1'} or \eqref{TTT2'} for Cases 4-7,
\begin{equation}\label{7}
\varrho^n(t)\le 2\varrho_0 \ \ \textrm{and}\ \ U^n(t)\le 4CU^0.
\end{equation}
\end{Lemma}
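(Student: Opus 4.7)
The plan is to prove both bounds by induction on $n$. The base case $n=0$ is immediate: $\varrho^0(t)=\varrho_0\le 2\varrho_0$, and the norm $U^0$ on the right of \eqref{7} is precisely the initial datum size appearing in $U^0$ itself, so there is nothing to check. For the inductive step, assume $\varrho^n(t)\le 2\varrho_0$ and $U^n(t)\le 4CU^0$ on $[0,T_\ast]$ and propagate both bounds to the index $n+1$, case by case.

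For the density bound, I would substitute $U^n\le 4CU^0$ into the transport estimate \eqref{60} (or its analogue \eqref{60'''} when we are in Cases 4--7). The first constraint inside the minimum defining $T_\ast$ in \eqref{TTT1}--\eqref{TTT2'} is engineered exactly so that the smallness assumption $Ct^{3/2-1/p-3/(2q)}U^n(t)\le \ln 2$ of \eqref{60'} (respectively $Ct^{1-1/p}U^n(t)\le \ln 2$ of \eqref{llll}) holds on $[0,T_\ast]$. Inserting this into \eqref{60} or \eqref{60'''} gives $\varrho^{n+1}(t)\le 2\varrho_0$ at once.

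For the bound on $U^{n+1}$, I would feed the inductive hypothesis into whichever of \eqref{6}, \eqref{l}, \eqref{lll}, or \eqref{lllll} applies to the current case. The second term in the minimum defining $T_\ast$ is chosen so that
\[
2^\sigma C T_\ast(1+\varrho_0)^\sigma(1+4CU^0)^{1/\beta}\le \ln 2,
\]
which forces the exponential prefactor to be bounded by $2$. The third term is chosen so that $T_\ast^\alpha\,16C^2(U^0)^2(3+4CU^0)\le U^0$, where $\alpha$ is the smallest power of $t$ among $\tfrac12-\tfrac{3}{2q}$, $\tfrac13-\tfrac1q$, $\tfrac23-\tfrac1q$, and $\tfrac1p$ that appears in the case under consideration; this ensures the polynomial remainder is bounded by $U^0$. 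Combining gives
\[
U^{n+1}(t)\le C\cdot 2\cdot(U^0+U^0)=4CU^0,
\]
which closes the induction.

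The one subtle point, and the main book-keeping obstacle, is that the time factors $t^\alpha$ appearing in the estimates behave oppositely for $t\le 1$ and $t>1$: on $[0,1]$ the smallest exponent $\alpha$ yields the largest factor, while for $t>1$ it is the largest exponent that is dangerous. This is precisely why $T_\ast$ is defined by two different formulae, \eqref{TTT1} versus \eqref{TTT2} (and \eqref{TTT1'} versus \eqref{TTT2'}), depending on whether the natural choice of $T_\ast$ is $\le 1$ or $>1$; in the latter case the exponent $\max\{p,\tfrac{3q}{2q-3}\}$ (or $\max\{p,\tfrac{3q}{q-3}\}$, or simply $p$) replaces the smaller $\alpha$ so that $T_\ast^\alpha$ still gives the desired smallness. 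After checking that in every one of the seven cases the three constraints combine to yield the required inequality with the correct choice of $\alpha$, the induction proceeds uniformly in $n$, giving \eqref{7} on the full interval $[0,T_\ast]$.
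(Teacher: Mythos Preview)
Your proposal is correct and follows essentially the same route as the paper. The paper's ``proof'' of Lemma~\ref{41} is really the case analysis preceding it together with the one-line remark that ``a direct computation yields $U^{n+1}(t)\le 4CU^0$''; you have simply spelled out that induction, correctly identifying that the three constraints in the definition of $T_\ast$ respectively control the transport exponent (to force \eqref{60'} or \eqref{llll}), the exponential prefactor (bounded by $2$), and the polynomial remainder (bounded by $U^0$), and that the $T_\ast\le 1$ versus $T_\ast>1$ dichotomy is there only to pick out the correct dominant power of $t$.
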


\subsection{Convergence of the approximate sequence}
\begin{Lemma}
There exists $T_0$ such that $\{(\r^n, \u^n, P^n,
\d^n)\}_{n=1}^\infty$ is a Cauchy sequence in $M_{T_0}^{p,s,r}$ and
thus converges.
\end{Lemma}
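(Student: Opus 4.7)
The plan is to compare two consecutive iterates and close a contraction estimate. Set
$$\delta\r^{n+1}:=\r^{n+1}-\r^{n},\quad \delta\u^{n+1}:=\u^{n+1}-\u^{n},\quad \delta P^{n+1}:=P^{n+1}-P^{n},\quad \delta\d^{n+1}:=\d^{n+1}-\d^{n}.$$
Subtracting the equations \eqref{r}, \eqref{d}, \eqref{e5} at levels $n+1$ and $n$, the triple $(\delta\r^{n+1},\delta\u^{n+1},\delta P^{n+1},\delta\d^{n+1})$ satisfies a linear system with zero initial data:
\begin{equation*}
\partial_t\delta\r^{n+1}+\u^n\cdot\nabla\delta\r^{n+1}=-\delta\u^n\cdot\nabla\r^n,
\end{equation*}
\begin{equation*}
\partial_t\delta\d^{n+1}-\D\delta\d^{n+1}=-\delta\u^n\cdot\nabla\d^n-\u^{n-1}\cdot\nabla\delta\d^n+|\nabla\d^n|^2\delta\d^n+(\nabla\d^n+\nabla\d^{n-1}):\nabla\delta\d^n\,\d^{n-1},
\end{equation*}
\begin{equation*}
\r^{n+1}\partial_t\delta\u^{n+1}-\D\delta\u^{n+1}+\nabla\delta P^{n+1}=-\delta\r^{n+1}\partial_t\u^n-\r^{n+1}(\delta\u^n\cdot\nabla\u^n+\u^{n-1}\cdot\nabla\delta\u^n)-\delta\r^{n+1}\u^{n-1}\cdot\nabla\u^{n-1}-\nabla\cdot\bigl((\nabla\d^n)^\top\nabla\delta\d^n+(\nabla\delta\d^n)^\top\nabla\d^{n-1}\bigr),
\end{equation*}
together with $\Dv\delta\u^{n+1}=0$.

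Next, I would apply Proposition \ref{P1} to the transport equation, Theorem \ref{T3} to the $\delta\d^{n+1}$ equation, and Theorem \ref{T4} to the Stokes system, now in the weaker Lebesgue exponent $s$ determined by $\frac1s=\frac1q+\frac1r$. The reason for this downgrade is the structural loss in the $\delta\r$-equation: the forcing $\delta\u^n\cdot\nabla\r^n$ pairs an $L^q$ function against an $L^r$ function, so only $L^s$ control is available, and all other estimates must be performed at the same integrability. Define
\begin{equation*}
\begin{split}
V^{n+1}(t):=&\|\delta\r^{n+1}\|_{L^\infty_t(L^s)}+\|\delta\u^{n+1}\|_{L^\infty_t(D^{1-\frac1p,p}_{A_s})}+\|\delta\u^{n+1}\|_{L^p_t(W^{2,s})}\\
&+\|\partial_t\delta\u^{n+1}\|_{L^p_t(L^s)}+\|\delta P^{n+1}\|_{L^p_t(W^{1,s})}+\|\delta\d^{n+1}\|_{L^\infty_t(B^{3(1-\frac1p)}_{s,p})}+\|\delta\d^{n+1}\|_{\mathcal W_{s,p}(0,t)}.
\end{split}
\end{equation*}
The products on the right-hand sides of the difference equations are estimated by H\"older's inequality, pushing derivatives onto the ``old'' iterates (which are uniformly bounded by Lemma \ref{41}) and the low-integrability factor onto the difference. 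For instance, in the $\d$-equation one uses Lemma \ref{l2'} to control $\|\nabla\d^n\|_{L^p_t(L^\infty)}$ and Lemma \ref{l3} for $\|\nabla^2\d^n\|_{L^p_t(L^\infty)}$, bounding every critical term by a power of $t$ times a universal constant.

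Combining the three sets of estimates, one arrives at an inequality of the form
$$V^{n+1}(t)\le C\,\Phi\bigl(\varrho_0,U^0\bigr)\,t^{\kappa}\,V^n(t),\qquad t\in[0,T_\ast],$$
for some $\kappa>0$ (depending on the regime among Cases 1--7 above) and a continuous function $\Phi$. Choosing $T_0\le T_\ast$ so small that $C\Phi(\varrho_0,U^0)T_0^\kappa\le 1/2$, one obtains
$$V^{n+1}(t)\le\tfrac12\,V^{n}(t)\quad\text{on}\quad[0,T_0],$$
and hence $\{(\r^n,\u^n,P^n,\d^n)\}$ is a Cauchy sequence in $M^{p,s,r}_{T_0}$.

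The main obstacle is bookkeeping the interplay between the two nonlinearities $\r\u\cdot\nabla\u$ and $\nabla\cdot((\nabla\d)^\top\nabla\d)$: the latter is quasilinear of order two in $\d$, so when we differentiate the iteration, we must produce $\nabla\delta\d^n$ and $\nabla^2\delta\d^n$ factors in $L^p_t(L^s)$ with the remaining factor in $L^\infty_t(L^\infty)$ or $L^p_t(L^\infty)$. This is exactly why Lemmas \ref{l1}, \ref{l2'}, \ref{l3} are tailored with three Lebesgue indices $p,q,r$ satisfying $\frac1s=\frac1r+\frac1q$; applying them with $s$ in place of $q$ and invoking the uniform bounds of Lemma \ref{41} gives all the needed controls with the crucial positive power of $t$ in front.
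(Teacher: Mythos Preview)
Your overall strategy matches the paper's: subtract consecutive iterates, apply the maximal regularity Theorems~\ref{T3} and~\ref{T4} to the difference system at the weaker integrability exponent $s=\frac{qr}{q+r}$, and use Lemmas~\ref{l1}--\ref{l3} together with the uniform bounds of Lemma~\ref{41} to extract a positive power of $t$ in front of the previous difference. Choosing $T_0$ small then yields a contraction.

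There is, however, a concrete error in your bookkeeping of the density difference. You place $\delta\r^{n+1}$ in $L^\infty_t(L^s)$ and justify the downgrade to $s$ by the transport forcing $\delta\u^n\cdot\nabla\r^n$. This is backwards, and it prevents the argument from closing. The momentum equation for $\delta\u^{n+1}$ contains the term $\delta\r^{n+1}\,\partial_t\u^n$, which must land in $L^p_t(L^s)$. The only available control on $\partial_t\u^n$ is $L^p_t(L^q)$ from Lemma~\ref{41}, so by H\"older with $\tfrac{1}{s}=\tfrac{1}{r}+\tfrac{1}{q}$ you need $\delta\r^{n+1}\in L^\infty_t(L^r)$, not $L^\infty_t(L^s)$. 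With only $L^s$ control on $\delta\r^{n+1}$ the product is no better than $L^{p}_t(L^\sigma)$ with $\tfrac{1}{\sigma}=\tfrac{2}{q}+\tfrac{1}{r}>\tfrac{1}{s}$, and the estimate does not close at level $s$.

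The fix is exactly what the paper does: since $s>\tfrac{3}{2}$ one has $W^{2,s}(\Omega)\hookrightarrow L^\infty(\Omega)$, so $\delta\u^n\in L^p_t(W^{2,s})\hookrightarrow L^p_t(L^\infty)$, and the transport forcing $\delta\u^n\cdot\nabla\r^n$ is actually in $L^r$ (not merely $L^s$). One then estimates $\|\delta\r^{n+1}\|_{L^r}$ directly by multiplying the transport equation by $|\delta\r^{n+1}|^{r-2}\delta\r^{n+1}$ (Proposition~\ref{P1} as stated is for the homogeneous equation and does not apply here), obtaining
\[
\|\delta\r^{n+1}(t)\|_{L^r}\le t^{1-\frac1p}\|\delta\u^{n}\|_{L^p_t(L^\infty)}\|\nabla\r^n\|_{L^\infty_t(L^r)}\le Ct^{1-\frac1p}\,V^{n}(t).
\]
With $\delta\r^{n+1}$ in $L^r$, the term $\delta\r^{n+1}(\partial_t\u^n+\u^n\cdot\nabla\u^n)$ falls into $L^p_t(L^s)$ and the contraction goes through. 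Thus the true reason for working at integrability $s$ is the product structure in the \emph{momentum} equation (an $L^r$ density difference against $L^q$ forces), not a loss in the transport equation.
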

\begin{proof}
Let
$$\bar{\r}^n:=\r^{n+1}-\r^n, \quad \bar{\u}^n:=\u^{n+1}-\u^n, \quad \bar{P}^n:=P^{n+1}-P^n, \quad \bar{\F}^n:=\F^{n+1}-\F^n.$$
Define
\begin{equation*}
\begin{split}
 \bar{U}^n(t):=&\|\bar{\u}^n\|_{L^\infty_t(
D_{A_s}^{1-\f{1}{p},p})}+\|\bar{\u}^n\|_{L^p_t(
W^{2,s})}+\|\partial_t\bar{\u}^n\|_{L^p_t(L^s)}\\
&+\|\nabla\bar{P}^n\|_{L^p_t(L^s)}+\|\bar{\d}^n\|_{L^\infty_t(
B_{s,p}^{3(1-\f{1}{p})})}+\|\bar{\d}^n\|_{\mathcal{W}_{s,p}(0,t)}.
\end{split}
\end{equation*}
It is easy to verify that $(\bar{\r}^n, \bar{\u}^n, \bar{P}^n,
\bar{\d}^n)$ satisfies
\begin{equation*}
\begin{cases}
\partial_t\bar{\r}^n+\u^n\cdot\nabla\bar{\r}^n=-\bar{\u}^{n-1}\cdot\nabla\r^n, \\
\r^{n+1}\partial_t\bar{\u}^n-\triangle\bar{\u}^n+\nabla
\bar{P}^n=-\bar{\r}^n(\partial_t\u^n+\u^n\cdot\nabla\u^n)-\r^n(\u^n\cdot\nabla\bar{\u}^{n-1}+\bar{\u}^{n-1}\cdot\nabla\u^{n-1})\\
\qquad\qquad\qquad\qquad\qquad\qquad-\nabla\cdot\left((\nabla\bar{\d}^{n-1})^\top\nabla\d^n\right)
-\nabla\cdot\left((\nabla\d^{n-1})^\top\nabla\bar{\d}^{n-1}\right),\\
\partial_t\bar{\F}^{n}-\D\bar{\F}^{n}=-\bar{\u}^{n-1}\cdot\nabla \F^{n}-\u^{n-1}\cdot\nabla\bar{\F}^{n-1}+|\nabla\d^n|^2\bar{\d}^{n-1}\\
\qquad\qquad\qquad\quad
+\left((\nabla\d^n+\nabla\d^{n-1}):\nabla\bar{\d}^{n-1}\right)\d^{n-1},\\
\Dv\bar{\u}^{n}=0, \quad \int_\O \bar{P}^n d\x=0
\end{cases}
\end{equation*}
with the initial-boundary conditions:
$$(\bar{\rho}^n, \bar{\u}^n, \bar{\d}^n)|_{t=0}=(0,0,0),$$
$$(\bar{\u}^n,\partial_\nu\bar{\d}^n)|_{\partial\O}=(0,0).$$
Applying Theorem \ref{T3} to
\begin{equation*}
\begin{cases}
\partial_
t\bar{\d}^n-\triangle\bar{\d}^n=-\bar{\u}^{n-1}\cdot\nabla\d^n-\u^{n-1}\cdot\nabla\bar{\d}^{n-1}+|\nabla\d^n|^2\bar{\d}^{n-1}\\
\qquad\qquad\qquad\quad+\left((\nabla\d^n+\nabla\d^{n-1}):\nabla\bar{\d}^{n-1}\right)\d^{n-1},\\
\bar{\d}^n|_{t=0}=0,\quad
\partial_\nu\bar{\d}^n|_{\partial\O}=0,
\end{cases}
\end{equation*}
and applying Theorem \ref{T4} to
\begin{equation*}
\begin{cases}
\r^{n+1}\partial_t\bar{\u}^n-\triangle\bar{\u}^n+\nabla
\bar{P}^n=-\bar{\r}^n(\partial_t\u^n+\u^n\cdot\nabla\u^n)-\r^n(\u^n\cdot\nabla\bar{\u}^{n-1}+\bar{\u}^{n-1}\cdot\nabla\u^{n-1})\\
\qquad\qquad\qquad\qquad\qquad\qquad-\nabla\cdot\left((\nabla\bar{\d}^{n-1})^\top\nabla\d^n\right)-\nabla\cdot\left((\nabla\d^{n-1})^\top\nabla\bar{\d}^{n-1}\right),\\
\nabla\cdot\bar{\u}^{n}=0, \quad \int_\O \bar{P}^n d\x=0,\\
\bar{\u}^n|_{t=0}=0,\quad \bar{\u}^n|_{\partial\O}=0,
\end{cases}
\end{equation*}
 we have
\begin{equation*}
\begin{split}
\bar{U}^n(t)&\leq
C\Big(\|\bar{\r}^n\big(\partial_t\u^n+\u^n\cdot\nabla\u^n\big)\|_{L_t^p(L^s)}+\|\u^n\cdot\nabla\bar{\u}^{n-1}
+\bar{\u}^{n-1}\cdot\nabla\u^{n-1}\|_{L_t^p(L^s)}\\
&\qquad\quad+\|\nabla\cdot\big((\nabla\bar{\d}^{n-1})^\top\nabla\d^n\big)+\nabla\cdot\big((\nabla\d^{n-1})^\top\nabla\bar{\d}^{n-1}\big)\|_{L_t^p(L^s)}\\
&\qquad\quad+\|\bar{\u}^{n-1}\cdot\nabla\d^n
+\u^{n-1}\cdot\nabla\bar{\d}^{n-1}\|_{L_t^p(L^s)}\\
&\qquad\quad+\||\nabla\d^n|^2\bar{\d}^{n-1}+\left((\nabla\d^n+\nabla\d^{n-1}):\nabla\bar{\d}^{n-1}\right)\d^{n-1}\|_{L_t^p(L^s)}\Big)\\
&\leq C
\Big(\|\bar{\r}^n\|_{L_t^\infty(L^r)}\big(\|\partial_t\u^n\|_{L_t^p(L^q)}
+\|\u^n\|_{L_t^\infty(L^q)}\|\nabla\u^n\|_{L_t^p(L^\infty)}\big)\\
&\qquad\quad+\|\u^n\|_{L_t^\infty(L^q)}\|\nabla\bar{\u}^{n-1}
\|_{L_t^p(L^r)}+\|\bar{\u}^{n-1}\|_{L_t^\infty(L^s)}\|\nabla\u^{n-1}\|_{L_t^p(L^\infty)}\\
&\qquad\quad+\|\nabla\d^n\|_{L_t^\infty(L^q)}\|\triangle\bar{\d}^{n-1}\|_{L_t^p(L^r)}+\|\nabla\bar{\d}^{n-1}\|_{L_t^\infty(L^s)}\|\triangle\d^n\|_{L_t^p(L^\infty)}\\
&\qquad\quad+\|\nabla\bar{\d}^{n-1}\|_{L_t^\infty(L^s)}\|\triangle\d^{n-1}
\|_{L_t^p(L^\infty)}+\|\nabla\d^{n-1}\|_{L_t^\infty(L^q)}\|\triangle\bar{\d}^{n-1}\|_{L_t^p(L^r)}\\
&\qquad\quad+\|\bar{\u}^{n-1}\|_{L_t^\infty(L^s)}\|\nabla\d^n
\|_{L_t^p(L^\infty)}+\|\u^{n-1}\|_{L_t^\infty(L^q)}\|\nabla\bar{\d}^{n-1}\|_{L_t^p(L^r)}\\
&\qquad\quad+\|\nabla\d^n\|_{L_t^\infty(L^q)}\|\nabla\d^n\|_{L_t^p(L^r)}\|\bar{\d}^{n-1}
\|_{L_t^\infty(L^\infty)}\\
&\qquad\quad+\|\nabla\bar{\d}^{n-1}\|_{L_t^\infty(L^s)}\big(\|\nabla\d^n\|_{L_t^p(L^\infty)}+\|\nabla\d^{n-1}\|_{L_t^p(L^\infty)}\big)\|\d^{n-1}\|_{L_t^\infty(L^\infty)}\Big).
\end{split}
\end{equation*}
Note that if $\frac{1}{2}-\frac{3}{2q}<\frac{1}{p}$, using Young's
inequality, Lemma \ref{l1} yields $$\|\nabla
\bar{\u}^{n-1}\|_{L^p_t(L^r)}\le
Ct^{\f{1}{2}-\f{3}{2q}}\left(\|\bar{\u}^{n-1}\|^{1-\theta}_{L^\infty_t(
D_{A_s}^{1-\f{1}{p},p})}+\|\bar{\u}^{n-1}\|^\theta_{L^p_t(W^{2,s})}\right).$$
 If $\frac{1}{2}-\frac{3}{2q}>\frac{1}{p}$, we have $D_{A_s}^{1-\frac{1}{p},p}\hookrightarrow
 W^{1,r}$ so that the above inequality holds with the power of $t$ replaced by $t^{\frac{1}{p}}$.
 As for $\frac{1}{2}-\frac{3}{2q}=\frac{1}{p}$,  since
 $$\|\u^n\cdot\nabla\bar{\u}^{n-1}\|_{L_t^p(L^s)}\leq\|\u^n\|_{L_t^\infty(L^{q+})}\|\nabla\bar{\u}^{n-1}
\|_{L_t^p(L^{r-})}$$  and $D_{A_s}^{1-\frac{1}{p},p}\hookrightarrow
 L^{q+}$, $D_{A_s}^{1-\frac{1}{p},p}\hookrightarrow
 W^{1,r-}$ with $q+$ (resp. $r-$) slightly greater (resp. smaller) than $q$ (resp. $r$), we still have
 $$\|\u^n\cdot\nabla\bar{\u}^{n-1}\|_{L_t^p(L^s)}\leq t^{\frac{1}{p}}\|\u^n\|_{L_t^\infty(D_{A_s}^{1-\frac{1}{p},p})}\|\bar{\u}^{n-1}
\|_{L_t^\infty(D_{A_s}^{1-\frac{1}{p},p})}.$$
  The other terms such as $\|\triangle\bar{\d}^{n-1}\|_{L_t^p(L^r)}, \|\nabla\bar{\d}^{n-1}\|_{L_t^p(L^r)}$
 and $\|\triangle\d^n\|_{L_t^p(L^r)}$ may be handled via the
similar technique by using the Besov space $B_{s,p}^{3(1-\f{1}{p})}$.

To simplify the presentation, assume from now that
$\frac{2}{3}-\frac{1}{q}<\frac{1}{p}$ so that Lemmas
\ref{l1}-\ref{l3} can be applied.
Otherwise, according to the arguments above, we would get
$t^{\frac{1}{p}}$ instead of $t^{\frac{2}{3}-\frac{1}{q}}$,
$t^{\frac{1}{2}-\frac{3}{2q}}$, or $t^{\frac{1}{3}-\frac{1}{q}}$
below once their exponents are greater than or equal to
$\frac{1}{p}$.

Hence, for all $t\in[0,T_\ast]$, taking advantage of \eqref{7}, the
embedding $$B_{s,p}^{3(1-\f{1}{p})}\hookrightarrow W^{1,s}\
\textrm{as}\ \ s=\frac{qr}{q+r}>\frac{3}{2}$$ and Lemmas
\ref{l1}-\ref{l3}, we get
\begin{equation}\label{8}
\begin{split}
\bar{U}^n(t)\leq &
C\Big(\|\bar{\r}^n\|_{L_t^\infty(L^r)}+\|\nabla\bar{\u}^{n-1}
\|_{L_t^p(L^r)}+t^{\frac{1}{2}-\frac{3}{2q}}\|\bar{\u}^{n-1}\|_{L_t^\infty(L^s)}+\|\triangle\bar{\d}^{n-1}\|_{L_t^p(L^r)}\\
&\qquad+t^{\frac{1}{3}-\frac{1}{q}}\|\nabla\bar{\d}^{n-1}\|_{L_t^\infty(L^s)}
+t^{\frac{2}{3}-\frac{1}{q}}\|\bar{\u}^{n-1}\|_{L_t^\infty(L^s)}+\|\nabla\bar{\d}^{n-1}\|_{L_t^p(L^r)}\\
&\qquad+t^{\frac{2}{3}-\frac{1}{q}}\|\bar{\d}^{n-1}\|_{L_t^\infty(L^\infty)}+
t^{\frac{2}{3}-\frac{1}{q}}\|\nabla\bar{\d}^{n-1}\|_{L_t^\infty(L^s)}\Big)\\
& \leq C
\left(\|\bar{\r}^n\|_{L_t^\infty(L^r)}+(t^{\frac{1}{2}-\frac{3}{2q}}+t^{\frac{1}{3}-\frac{1}{q}}+t^{\frac{2}{3}-\frac{1}{q}})\bar{U}^{n-1}(t)\right).
\end{split}
\end{equation}
Moreover, multiplying
$$\partial_t\bar{\r}^n+\u^n\cdot\nabla\bar{\r}^n=-\bar{\u}^{n-1}\cdot\nabla\r^n$$
 by $|\bar{\r}^n|^{r-2}\bar{\r}^n$ and integrating
over $\Omega$, using $\nabla\cdot\u^n=0$ and the zero
boundary condition, by H\"{o}lder's inequality, we have
\begin{equation*}
\begin{split}
\frac{1}{r}\frac{d}{dt}\|\bar{\r}^n\|_{L^r}^r&=-\frac{1}{r}\int_\Omega\u^n\cdot\nabla(|\bar{\r}^n|^r)
\ d\x-\int_\Omega|\bar{\r}^n|^{r-2}\bar{\r}^n\bar{\u}^{n-1}\cdot\nabla\bar{\r}^n\ d\x\\
&\leq
\|\bar{\r}^n\|_{L^r}^{r-1}\|\bar{\u}^{n-1}\cdot\nabla\bar{\r}^n\|_{L^r}.
\end{split}
\end{equation*}
By H\"{o}lder's inequality, \eqref{7} and the embedding
$$W^{2,s}(\Omega)\hookrightarrow L^\infty(\Omega) \quad \textrm{as}\ \  s>\frac{3}{2},$$
we eventually obtain
\begin{equation}\label{9}
\begin{split}
\|\bar{\r}^n(t)\|_{L^r}
&\le\int_0^t\|\bar{\u}^{n-1}(\tau)\cdot\nabla\r^n(\tau)\|_{L^r} d\tau\\
&\le
t^{1-\frac{1}{p}}\|\bar{\u}^{n-1}\|_{L^p_t(L^\infty)}\|\nabla\r^n\|_{L_t^\infty(L^r)}\\
&\le Ct^{1-\frac{1}{p}}\bar{U}^{n-1}(t).
\end{split}
\end{equation}
Inserting  \eqref{9} into  \eqref{8}, we get for $t\in [0,T_\ast]$,
\begin{equation*}
\bar{U}^n(t)\le
C(t^{1-\frac{1}{p}}+t^{\frac{1}{2}-\frac{3}{2q}}+t^{\frac{1}{3}-\frac{1}{q}}+t^{\frac{2}{3}-\frac{1}{q}})\bar{U}^{n-1}(t).
\end{equation*}
If we choose   $T_0\in (0,T_\ast]$ such that
\begin{equation}\label{10}
C(T_0^{1-\frac{1}{p}}+T_0^{\frac{1}{2}-\frac{3}{2q}}+T_0^{\frac{1}{3}-\frac{1}{q}}+T_0^{\frac{2}{3}-\frac{1}{q}})\leq
\frac{1}{2},
\end{equation}
then $\{(\r^n, \u^n, P^n, \d^n)\}$ is
a Cauchy sequence in $M_{T_0}^{p,s,r}$ and thus converges in
$M_{T_0}^{p,s,r}$.
\end{proof}

We remark here that the time of existence $T_0$ depends
(continuously) on the norms of the data, on the bound for the
density, on the domain and on the regularity parameters.

\subsection{The limit is a solution}
 Let $(\r,\u, P, \d)\in M_{{T_0}}^{p,s,r}$ be the limit of the sequence $\{(\r^n, \u^n, P^n, \d^n)\}_{n=1}^\infty$ in $M_{T_0}^{p,s,r}$.
Passing to the limit in \eqref{5088} and \eqref{7} yields
$$ \check{\r}\leq\r(\x,t)\leq\hat{\r},\ \ (\x,t)\in \O\times[0,T_0]\ \ \ \textrm{and}\ \ \r\in L^\infty(0,T_0; W^{1,r}(\Omega)),$$
$$\u\in L^\infty(0,T_0; D_{A_q}^{1-\frac{1}{p},p})\cap L^p(0,T_0; W^{2,q}(\Omega)), \quad \partial_t\u\in L^p(0,T_0; L^q(\Omega)),$$
$$\d\in L^\infty(0,T_0; B_{q,p}^{3(1-\frac{1}{p})})\cap L^p(0,T_0; W^{3,q}(\Omega)), \quad \partial_t\d\in L^p(0,T_0; L^q(\Omega)),$$
$$P\in L^p(0,T_0; W^{1,q}(\Omega)).$$
%Combining with the properties of convergence stated in the previous
%part of the proof, we conclude that $(\r^n, \u^n, P^n, \d^n)_{n\in
%\mathbb{N}}$ converges to $(\r, \u, P,\d)$ in $M_T^{p,q',r'}$ for
%all $q'<q$ and $r'<r$.

 We claim all those nonlinear terms in \eqref{r}
\eqref{d} \eqref{e5} converge to their corresponding terms in
\eqref{e2} almost everywhere in $\O\times (0,T_0)$.
 Indeed,  for $\alpha:=\frac{rs}{r+s}(=\frac{qr}{2q+r})$,
\begin{equation*}
\begin{split}
&\|\u^n\cdot\nabla\r^{n+1}-\u\cdot\nabla\r\|_{L^\infty_{T_0}(
L^\alpha)}\\
&\le\|\u^n-\u\|_{L^\infty_{T_0}(L^s)}\|\nabla\r^{n+1}\|_{L^\infty_{T_0}(
L^r)}+\|\u\|_{L^\infty_{T_0}
(L^s)}\|\nabla\r^{n+1}-\nabla\r\|_{L_{T_0}^\infty(L^r)}\\
&\le C\left(\varrho_0\|\u^n-\u\|_{M_{T_0}^{p,s,r}}
+\|\u\|_{L^\infty_{T_0}(
L^s)}\|\r^{n+1}-\r\|_{M_{T_0}^{p,s,r}}\right)\\&\rightarrow 0 \ \
\textrm{as}\ \ n\rightarrow\infty,
\end{split}
\end{equation*}

\begin{equation*}
\begin{split}
&\|\r^{n+1}\partial_t\u^{n+1}-\r\partial_t\u\|_{L^p_{T_0}(
L^\alpha)}\\
&\le\|\r^{n+1}\|_{L^\infty_{T_0}(L^r)}\|\partial_t\u^{n+1}-\partial_t\u\|_{L^p_{T_0}(
L^s)}+\|\r^{n+1}-\r\|_{L^\infty_{T_0}
(L^r)}\|\partial_t\u\|_{L_{T_0}^p(L^s)}\\
&\le C\left(\varrho_0\|\u^n-\u\|_{M_{T_0}^{p,s,r}}
+\|\partial_t\u\|_{L_{T_0}^p(L^s)}\|\r^{n+1}-\r\|_{M_{T_0}^{p,s,r}}\right)\\&\rightarrow
0 \ \ \textrm{as}\ \ n\rightarrow\infty,
\end{split}
\end{equation*}
and
\begin{equation*}
\begin{split}
&\|\r^{n+1}\u^n\cdot\nabla\u^n-\r\u\cdot\nabla\u\|_{L^p_{T_0}(L^\alpha)}\\
&\le\|\r^{n+1}-\r\|_{L^\infty_{T_0} (L^r)}\|\u^n\|_{L^\infty_{T_0}(
L^s)}\|\nabla\u^n\|_{L^p_{T_0} (L^\infty)}\\
&\quad+\|\r\|_{L^\infty_{T_0} (L^\infty)}\|\u^n\|_{L^\infty_T(L^s)}\|\nabla\u^n-\nabla\u\|_{L^p_T(L^r)}\\
&\quad+\|\r\|_{L^\infty_{T_0}
(L^\infty)}\|\u^n-\u\|_{L^\infty_{T_0}(
L^s)}\|\nabla\u\|_{L^p_{T_0}(L^r)}\\
&\le C\|\r^{n+1}-\r\|_{L^\infty_{T_0}
(L^r)}\|\u^n\|_{L^\infty_{T_0}(
L^q)}\|\u^n\|_{L^p_{T_0} (W^{2,q})}\\
&\quad+C\|\r\|_{L^\infty_{T_0} (L^\infty)}\|\u^n\|_{L^\infty_{T_0}(
L^q)}\|\nabla\u^n-\nabla\u\|_{L^p_{T_0}(L^r)}\\
&\quad+\|\r\|_{L^\infty_{T_0}(L^\infty)}\|\u^n-\u\|_{L^\infty_{T_0}(L^s)}\|\nabla\u\|_{L^p_{T_0}(L^r)}\\
&\le C\Big((U^0)^2\|\r^{n+1}-\r\|_{M_{T_0}^{p,s,r}}
+U^0{T_0}^{\frac{1}{2}-\frac{3}{2q}}\|\r\|_{L^\infty_{T_0}
(L^\infty)}\|\u^n-\u\|_{M_{T_0}^{p,s,r}}\\
&\quad\qquad+{T_0}^{\frac{1}{2}-\frac{3}{2q}}\|\r\|_{L^\infty_{T_0}
(L^\infty)}\|\u^n-\u\|_{M_{T_0}^{p,s,r}}
\|\u\|_{M_{T_0}^{p,s,r}}\Big)\\&\rightarrow 0 \ \ \textrm{as}\ \
n\rightarrow\infty,
\end{split}
\end{equation*}
due to $\u^n\rightarrow\u$ and $\r^{n+1}\rightarrow\r$ in
$M_{T_0}^{p,s,r}$ as $n\rightarrow \infty$. Hence,
$$\u^n\cdot\nabla\r^{n+1}\rightarrow\u\cdot\nabla\r \quad \textrm{in}\ \  \big(L^p(0,{T_0};
L^\alpha(\Omega))\big)^3;$$
$$\r^{n+1}\partial_t\u^{n+1}\rightarrow\r\partial_t\u \quad \textrm{in}\ \ \big(L^p(0,{T_0};
L^\alpha(\Omega))\big)^3;$$
$$\r^{n+1}\u^n\cdot\nabla\u^n\rightarrow\r\u\cdot\nabla\u \quad \textrm{in}\ \ \big(L^p(0,{T_0};
L^\alpha(\Omega))\big)^3.$$ Meanwhile,
\begin{equation*}
\begin{split}
&\|\u^n\cdot\nabla\d^n-\u\cdot\nabla\d\|_{L^p_{T_0}(
L^s)}\\
&\le\|\d^n-\d\|_{L^\infty_{T_0}(L^s)}\|\nabla\d^n\|_{L^p_{T_0}(
L^\infty)}+\|\u\|_{L^\infty_{T_0}(
L^s)}\|\nabla\d^n-\nabla\d\|_{L^p_{T_0}( L^\infty)}\\
&\le C\left(U^0\|\d^n-\d\|_{M_{{T_0}}^{p,s,r}}
+\|\u\|_{L^\infty_{T_0}(
L^s)}\|\d^n-\d\|_{M_{{T_0}}^{p,s,r}}\right)\\
&\rightarrow 0,\ \textrm{as}\ n\rightarrow\infty,
\end{split}
\end{equation*}
\begin{equation*}
\begin{split}
&\|\nabla\cdot((\nabla\d^n)^\top\nabla\d^n)-\nabla\cdot((\nabla\d)^\top\nabla\d)\|_{L^p_{T_0}(
L^\alpha)}\\
&=\|\frac{1}{2}\nabla\left((\nabla\d^n+\nabla\d):(\nabla\d^n-\nabla\d)\right)
+(\nabla\d^n-\nabla\d)^\top\triangle\d^n+(\nabla\d)^\top(\triangle\d^n-\triangle\d)\|_{L^p_{T_0}(
L^\alpha)}\\
&\le
C\Big(\|\nabla\d^n-\nabla\d\|_{L^\infty_{T_0}(L^s)}\|\triangle\d^n+\triangle\d\|_{L^p_{T_0}(L^r)}
+\|\nabla\d^n+\nabla\d\|_{L^\infty_{T_0}(L^s)}\|\triangle\d^n-\triangle\d\|_{L^p_{T_0}(L^r)}\\
&
\qquad\quad+\|\nabla\d^n-\nabla\d\|_{L^\infty_{T_0}(L^s)}\|\triangle\d^n\|_{L^p_{T_0}(L^r)}+\|\nabla\d\|_{L^\infty_{T_0}(L^s)}\|\triangle\d^n-\triangle\d\|_{L^p_{T_0}(L^r)}\Big)\\
&\le
CT_0^{\frac{1}{3}-\frac{1}{q}}(U^0+\|\d\|_{M_{{T_0}}^{p,s,r}})\|\d^n-\d\|_{M_{{T_0}}^{p,s,r}}
\\
&\rightarrow 0,\ \textrm{as}\ n\rightarrow\infty.
\end{split}
\end{equation*}
Then, we have
$$\nabla\u^n\cdot\nabla\d^n\rightarrow \u\cdot\nabla\d
\quad\textrm{in}\ \big(L^p(0,T_0; L^s(\Omega))\big)^3;$$
$$\nabla\cdot((\nabla\d^n)^\top\nabla\d^n)\rightarrow\nabla\cdot((\nabla\d)^\top\nabla\d) \quad\textrm{in}\
\big(L^p(0,T_0; L^\alpha(\Omega))\big)^3.$$ If $s>3$, we have
\begin{equation*}
\begin{split}
&\||\nabla\d^n|^2\d^n- |\nabla\d|^2\d\|_{L^p_{T_0}(L^{\frac{q}{3}})}\\
&\leq \|\nabla\d^n\|_{L^\infty_{T_0}(L^q)}
\left(\|\nabla\d^n\|_{L^p_{T_0}(L^\infty)}\|\d^n-\d\|_{L^\infty_{T_0}(L^{\frac{q}{2}})}
+\|\d\|_{L^p_{T_0}(L^\infty)}\|\nabla\d^n-\nabla\d\|_{L^\infty_{T_0}(L^{\frac{q}{2}})}\right)\\
&\quad
+\|\d\|_{L^\infty_{T_0}(L^\infty)}\|\nabla\d\|_{L^p_{T_0}(L^q)}\|\nabla\d^n-\nabla\d\|_{L^\infty_{T_0}(L^{\frac{q}{2}})}\\
&\leq C\|\nabla\d^n\|_{L^\infty_{T_0}(L^q)}
\left(\|\nabla\d^n\|_{L^p_{T_0}(W^{1,q})}\|\d^n-\d\|_{L^\infty_{T_0}(L^s)}
+\|\d\|_{L^p_{T_0}(W^{1,s}))}\|\nabla\d^n-\nabla\d\|_{L^\infty_{T_0}(L^s)}\right)\\
&\quad
+C\|\d\|_{L^\infty_{T_0}(W^{1,s}))}\|\nabla\d\|_{L^p_{T_0}(L^r)}\|\nabla\d^n-\nabla\d\|_{L^\infty_{T_0}(L^s)}\\
&\leq CU^0\left(U^0+\|\d\|_{M_{{T_0}}^{p,s,r}}\right)\|\d^n-\d\|_{M_{{T_0}}^{p,s,r}}+CT_0^{\frac{1}{2}-\frac{3}{2q}}\|\d\|^2_{M_{{T_0}}^{p,s,r}}\|\d^n-\d\|_{M_{{T_0}}^{p,s,r}}\\
&\rightarrow 0,\ \textrm{as}\ n\rightarrow\infty.
\end{split}
\end{equation*}
If $\frac{3}{2}<s<3$, we have
\begin{equation*}
\begin{split}
&\||\nabla\d^n|^2\d^n- |\nabla\d|^2\d\|_{L^p_{T_0}(L^\frac{q}{3})}\\
&\leq C\left(\||\nabla\d^n|^2
(\d^n-\d)\|_{L^p_{T_0}(L^{\frac{q}{3}})}
+\|\nabla\d^n:(\nabla\d^n-\nabla\d)\d\|_{L^p_{T_0}(L^{\frac{q}{3}})}\right)\\
&\quad
+\|\nabla\d:(\nabla\d^n-\nabla\d)\d\|_{L^p_{T_0}(L^\frac{3s}{6-s})}\\
&\leq C\|\nabla\d^n\|_{L^\infty_{T_0}(L^q)}
\left(\|\nabla\d^n\|_{L^p_{T_0}(L^\infty)}\|\d^n-\d\|_{L^\infty_{T_0}(L^{\frac{q}{2}})}
+\|\d\|_{L^p_{T_0}(L^\infty)}\|\nabla\d^n-\nabla\d\|_{L^\infty_{T_0}(L^{\frac{q}{2}})}\right)\\
&\quad
+\|\d\|_{L^\infty_{T_0}(L^\frac{3s}{3-s})}\|\nabla\d\|_{L^p_{T_0}(L^\infty)}\|\nabla\d^n-\nabla\d\|_{L^\infty_{T_0}(L^s)}\\
&\leq C\|\nabla\d^n\|_{L^\infty_{T_0}(L^q)}
\left(\|\nabla\d^n\|_{L^p_{T_0}(W^{1,q})}\|\d^n-\d\|_{L^\infty_{T_0}(L^s)}
+\|\d\|_{L^p_{T_0}(W^{2,s}))}\|\nabla\d^n-\nabla\d\|_{L^\infty_{T_0}(L^s)}\right)\\
&\quad
+C\|\d\|_{L^\infty_{T_0}(W^{1,s}))}\|\nabla\d\|_{L^p_{T_0}(W^{2,s})}\|\nabla\d^n-\nabla\d\|_{L^\infty_{T_0}(L^s)}\\
&\leq CU^0\left(U^0+\|\d\|_{M_{{T_0}}^{p,s,r}}\right)\|\d^n-\d\|_{M_{{T_0}}^{p,s,r}}+C\|\d\|^2_{M_{{T_0}}^{p,s,r}}\|\d^n-\d\|_{M_{{T_0}}^{p,s,r}}\\
&\rightarrow 0,\ \textrm{as}\ n\rightarrow\infty.
\end{split}
\end{equation*}
The  case $s=3$ may be handled by noticing that we also have
\begin{equation*}
\begin{split}
&\||\nabla\d^n|^2\d^n- |\nabla\d|^2\d\|_{L^p_{T_0}(L^{\frac{q}{3}})}\\
&\leq \|\nabla\d^n\|_{L^\infty_{T_0}(L^q)}
\left(\|\nabla\d^n\|_{L^p_{T_0}(L^\infty)}\|\d^n-\d\|_{L^\infty_{T_0}(L^{\frac{q}{2}})}
+\|\d\|_{L^p_{T_0}(L^\infty)}\|\nabla\d^n-\nabla\d\|_{L^\infty_{T_0}(L^{\frac{q}{2}})}\right)\\
&\quad
+\|\d\|_{L^\infty_{T_0}(L^\frac{3q}{9-q})}\|\nabla\d\|_{L^p_{T_0}(L^\infty)}\|\nabla\d^n-\nabla\d\|_{L^\infty_{T_0}(L^3)}\\
&\leq C\|\nabla\d^n\|_{L^\infty_{T_0}(L^q)}
\left(\|\nabla\d^n\|_{L^p_{T_0}(W^{1,q})}\|\d^n-\d\|_{L^\infty_{T_0}(L^3)}
+\|\d\|_{L^p_{T_0}(W^{2,3}))}\|\nabla\d^n-\nabla\d\|_{L^\infty_{T_0}(L^3)}\right)\\
&\quad
+C\|\d\|_{L^\infty_{T_0}(W^{1,3})}\|\nabla\d\|_{L^p_{T_0}(W^{2,3})}\|\nabla\d^n-\nabla\d\|_{L^\infty_{T_0}(L^3)}\\
&\leq CU^0\left(U^0+\|\d\|_{M_{{T_0}}^{p,3,r}}\right)\|\d^n-\d\|_{M_{{T_0}}^{p,3,r}}+C\|\d\|^2_{M_{{T_0}}^{p,3,r}}\|\d^n-\d\|_{M_{{T_0}}^{p,3,r}}\\
&\rightarrow 0,\ \textrm{as}\ n\rightarrow\infty.
\end{split}
\end{equation*}
 Hence, we finally get
$$|\nabla\d^n|^2\d^n\rightarrow |\nabla\d|^2\d\quad\textrm{in}\
\big(L^p(0,{T_0}; L^{\frac{q}{3}}(\O))\big)^3.$$
 Thus, passing to the limit in \eqref{r}, \eqref{d} and \eqref{e5} as
$n\to\infty$ , since $L^s(\O)\hookrightarrow
L^\alpha(\O)\hookrightarrow L^{\frac{q}{3}}(\O)$, we conclude that
\eqref{e2} holds in $\big(L^p(0,T_0;
L^{\frac{q}{3}}(\Omega))\big)^3$ and therefore almost everywhere in
$\O\times (0,T_0)$.

Multiply the $\d$-system \eqref{e23} by $\d$, we obtain
\begin{equation*}
\frac{1}{2}\partial_t(|\d|^2)
+\frac{1}{2}\u\cdot\nabla(|\d|^2)=\triangle\d\cdot\d+|\nabla\d|^2|\d|^2.
\end{equation*}
Since $$\triangle(|\d|^2)=2|\nabla\d|^2+2\d\cdot(\triangle\d),$$
then it follows that
\begin{equation*}
\frac{1}{2}\partial_t(|\d|^2)+\frac{1}{2}\u\cdot\nabla(|\d|^2)=\frac{1}{2}\triangle(|\d|^2)-|\nabla\d|^2+|\nabla\d|^2|\d|^2.
\end{equation*}
Therefore, it is easy to deduce that
\begin{equation}\label{s}
\partial_t(|\d|^2-1)
-\triangle(|\d|^2-1)+\u\cdot\nabla(|\d|^2-1)-2|\nabla\d|^2(|\d|^2-1)=0.
\end{equation}
Multiplying \eqref{s} by $(|\d|^2-1)$ and then integrating over
$\Omega$, using \eqref{e24} and \eqref{bc}, we get the following
inequality:
\begin{equation}\label{ss}
\begin{split}
\frac{d}{dt}\int_{\Omega} (|\d|^2-1)^2\ d\x &
\le 4\int_{\Omega}|\nabla\d|^2(|\d|^2-1)^2\ d\x\\
 &\le 4\|\nabla\d\|_{L^\infty}^2\int_{\Omega}(|\d|^2-1)^2\ d\x.
\end{split}
\end{equation}
Remark that interpolation between $L^\infty(0,T_0; W^{1,q}(\O))$ and
$L^p(0,T_0;W^{3,q}(\O))$  shows that for some  positive
$\alpha>\frac{1}{2}$,
 $\d$ belongs to $L^2(0,{T_0}; H^{2+\alpha}(\O))$
and that $\|\nabla\d\|_{L^\infty}^2\in L^1(0,T_0)$. Notice that
$$\int_{\Omega}(|\d|^2-1)^2\ d\x=0, \quad\text{at time}\ t=0.$$
Thus, using \eqref{ss} together with Gr\"{o}nwall's inequality, it
yields $|\d|=1$
 in $\O\times (0, T_0)$.

\subsection{Uniqueness and continuity}
Let $(\r_1, \u_1,  P_1, \F_1)$ and $(\r_2, \u_2,
P_2, \F_2)$ be two solutions to \eqref{e2} with the initial-boundary
conditions \eqref{ic} \eqref{bc}. Denote
$$\bar{\r}=\r_1-\r_2,\quad\bar{\u}=\u_1-\u_2,\quad \bar{P}=P_1-P_2,\quad \bar{\F}=\F_1-\F_2.$$ Note
that the quadruplet $(\bar{\r}, \bar{\u}, \bar{P}, \bar{\F})$
satisfies the following system:
\begin{equation*}
\begin{cases}
\partial_t\bar{\r}+\u_1\cdot\nabla\bar{\r}=-\bar{\u}\cdot\nabla\r_2,\\
\r_1\partial_t\bar{\u}-\triangle\bar{\u}+\nabla
\bar{P}=-\bar{\r}(\partial_t\u_2+\u_1\cdot\nabla\u_1)-\r_2(\bar{\u}\cdot\nabla\u_1+\u_2\cdot\nabla\bar{\u})\\
\qquad\qquad\qquad\qquad\quad\ -\Dv\big((\nabla\d_1)^\top\nabla\bar{\d}\big)-\Dv\big((\nabla\bar{\d})^\top\nabla\d_2\big),\\
\partial_t\bar{\F}-\D\bar{\F}=-\u_1\cdot\nabla\bar{\d}-\bar{\u}\cdot\nabla\F_2+|\nabla\d_1|^2\bar{\d}+\left((\nabla\d_1+\nabla\d_2):\nabla\bar{\d}\right)\d_2,\\
\Dv\bar{\u}=0, \quad \int_\O \bar{P}\ d\x=0
\end{cases}
\end{equation*}
with the initial-boundary conditions:
$$(\bar{\r},\bar{\u},\bar{\d})|_{t=0}=(0,0,0), \quad (\bar{\u},\partial_{\bf \nu}\bar{\d})|_{\partial\O}=(0,0).$$
Using the same argument for $\bar{\r}^n$ in Subsection 4.3, for all
$t\in [0, T_0]$, we have
\begin{equation}\label{151}
\begin{split}
\|\bar{\r}(t)\|_{L^r}&\leq
\int_0^t\|\nabla\r_2(\tau)\|_{L^r}\|\bar{\u}(\tau)\|_{L^\infty} d\tau\\
&\leq
t^{1-\frac{1}{p}}\|\nabla\r_2\|_{L_t^\infty(L^r)}\|\bar{\u}\|_{L_t^p(L^\infty)}\\
&\leq
Ct^{1-\frac{1}{p}}\|\r_2\|_{L_t^\infty(W^{1,r})}\|\bar{\u}\|_{L_t^p(W^{2,s})}.
\end{split}
\end{equation}

On the one hand, since $\r_1, \r_2\in L^\infty(0,T_0;
W^{1,r}(\Omega))\cap W^{1,\infty}(0,T_0; L^s(\Omega))$ implies that
$\r_1, \r_2\in C^\beta([0,T_0]; L^\infty(\Omega))$ whenever $\beta
\in (0, \frac{1-\frac{3}{r}}{1+\frac{3}{q}})$, then Theorem \ref{T4}
yields, for some constant C depending on $T_0, p, q, r, \check{\r},
\hat{\r}, \Omega, \beta$ and on the norm of $\r_1$ in
$L^\infty(0,T_0; W^{1,r}(\O))\cap C^\beta(0,T_0; L^\infty(\O))$, and
for all $t\in [0,T_0]$,
\begin{equation}\label{511}
\begin{split}
&\|\bar{\u}(t)\|_{D_{A_s}^{1-\frac{1}{p},
p}}+\|\bar{\u}\|_{L_t^p(W^{2,s})}+\|\partial_t\bar{\u}\|_{L_t^p(L^s)}+\|\bar{P}\|_{L_t^p(W^{1,s})}\\
&\leq
C\Big(\|\bar{\r}(\partial_t\u_2+\u_1\cdot\nabla\u_1)\|_{L_t^p(L^s)}+\|\r_2\bar{\u}\cdot\nabla\u_1\|_{L_t^p(L^s)}+\|\r_2\u_2\cdot\nabla\bar{\u}\|_{L_t^p(L^s)}\\
&\qquad\quad+\|\nabla\cdot\big((\nabla\d_1)^\top
\nabla\bar{\d}\big)\|_{L_t^p(L^s)}+\
\|\nabla\cdot\big((\nabla\bar{\d})^\top\nabla\d_2\big)\|_{L_t^p(L^s)}\Big)\\
&\leq
C\Big(\|\bar{\r}\|_{L_t^\infty(L^r)}\big(\|\partial_t\u_2\|_{L_t^p(L^q)}+\|\u_1\|_{L_t^\infty(L^q)}\|\nabla\u_1\|_{L_t^p(L^\infty)}\big)+\|\nabla\u_1\|_{L_t^p(L^\infty)}\|\bar{\u}\|_{L_t^\infty(L^s)}\\
&\qquad\quad
 +\|\u_2\|_{L_t^\infty(L^q)}\|\nabla\bar{\u}\|_{L_t^p(L^r)}+\|\nabla\d_1\|_{L_t^\infty(L^q)}\|\triangle\bar{\d}\|_{L_t^p(L^r)}
 +\|\triangle\d_1\|_{L_t^p(L^\infty)}\|\nabla\bar{\d}\|_{L_t^\infty(L^s)}
\\
&\qquad\quad+\|\nabla\d_2\|_{L_t^\infty(L^q)}\|\triangle\bar{\d}\|_{L_t^p(L^r)}+
\|\triangle\d_2\|_{L_t^p(L^\infty)}\|\nabla\bar{\d}\|_{L_t^\infty(L^s)}\Big)\\
&\leq
C\Big(\|\bar{\r}\|_{L_t^\infty(L^r)}\big(\|\partial_t\u_2\|_{L_t^p(L^q)}+\|\u_1\|_{L_t^\infty({D_{A_q}^{1-\frac{1}{p},p}})}\|\u_1\|_{L_t^p(W^{2,q})}\big)+\|\u_1\|_{L_t^p(W^{2,q})}\|\bar{\u}\|_{L_t^\infty(L^s)}\\
&\qquad\quad
+\|\u_2\|_{L_t^\infty({D_{A_q}^{1-\frac{1}{p},p}})}\|\nabla\bar{\u}\|_{L_t^p(L^r)}+(\|\d_1\|_{L_t^\infty(B_{q,p}^{3(1-\frac{1}{p})})}+\|\d_2\|_{L_t^\infty(B_{q,p}^{3(1-\frac{1}{p})})})\|\triangle\bar{\d}\|_{L_t^p(L^r)}
\\
&\qquad\quad+(\|\d_1\|_{L_t^p(W^{3,q})}+
\|\d_2\|_{L_t^p(W^{3,q})})\|\nabla\bar{\d}\|_{L_t^\infty(L^s)}\Big).
\end{split}
\end{equation}
 On the other hand, Theorem
\ref{T3} yields,  for some constant C independent of $T_0$,
\begin{equation}\label{512}
\begin{split}
&\|\bar{\d}(t)\|_{B_{s,p}^{3(1-\f{1}{p})}}+\|\bar{\d}\|_{\mathcal{W}_{s,p}(0,t)}\\
&\le
C\left(\|-\u_1\cdot\nabla\bar{\d}-\bar{\u}\cdot\nabla\d_2+|\nabla\d_1|^2\bar{\d}+\big((\nabla\d_1+\nabla\d_2):\nabla\bar{\d}\big)\d_2\|_{L^p_t
(L^s)}\right)\\
&\le
C\Big(\|\u_1\|_{L_t^\infty(L^q)}\|\nabla\bar{\d}\|_{L_t^p(L^r)}+\|\nabla\d_2\|_{L_t^p(L^\infty)}\|\bar{\u}\|_{L_t^\infty(L^s)}\\
&\qquad\quad+\|\nabla\d_1\|_{L_t^\infty(L^q)}\|\nabla\d_1\|_{L_t^p(L^\infty)}\|\bar{\d}\|_{L_t^\infty(L^r)}\\
&\qquad\quad+(\|\nabla\d_1\|_{L_t^p(L^\infty)}+\|\nabla\d_2\|_{L_t^p(L^\infty)})\|\nabla\bar{\d}\|_{L_t^\infty(L^s)}\|\d_2\|_{L_t^\infty(L^\infty)}\Big)\\
&\le
C\Big(\|\u_1\|_{L_t^\infty({D_{A_q}^{1-\frac{1}{p},p}})}\|\nabla\bar{\d}\|_{L_t^p(L^r)}+\|\d_2\|_{L_t^p(W^{2,q})}\|\bar{\u}\|_{L_t^\infty(L^s)}\\
&\qquad\quad+\|\d_1\|_{L_t^\infty(B_{q,p}^{3(1-\frac{1}{p})})}\|\d_1\|_{L_t^p(W^{2,q})}\|\bar{\d}\|_{L_t^\infty(L^r)}\\
&\qquad\quad+\|\d_2\|_{L_t^\infty(B_{q,p}^{3(1-\frac{1}{p})})}(\|\d_1\|_{L_t^p(W^{2,q})}+\|\d_2\|_{L_t^p(W^{2,q})})\|\nabla\bar{\d}\|_{L_t^\infty(L^s)}\Big).
\end{split}
\end{equation}
We remark here that H\"{o}lder's inequality and  the embedding
$W^{1,q}(\O)\hookrightarrow L^\infty(\O) \ (q>3)$ have been employed
repeatedly in both \eqref{511} and \eqref{512}.

Lemmas \ref{l1}-\ref{l3} yield, by use of Young's inequality,
$$\|\nabla\bar{\u}\|_{L_t^p(L^r)}\leq C
t^{\frac{1}{2}-\frac{3}{2q}}\big(\|\bar{\u}\|_{L_t^\infty(D_{A_s}^{1-\frac{1}{p},p})}+\|\bar{\u}\|_{L_t^p(W^{2,s})}\big),$$
$$\|\nabla\bar{\d}\|_{L_t^p(L^r)}\leq C
t^{\frac{2}{3}-\frac{1}{q}}\big(\|\bar{\d}\|_{L_t^\infty(B_{s,p}^{3(1-\frac{1}{p})})}+\|\bar{\d}\|_{L_t^p(W^{3,s})}\big),$$
$$\|\triangle\bar{\d}\|_{L_t^p(L^r)}\leq C
t^{\frac{1}{3}-\frac{1}{q}}\big(\|\bar{\d}\|_{L_t^\infty(B_{s,p}^{3(1-\frac{1}{p})})}+\|\bar{\d}\|_{L_t^p(W^{3,s})}\big).$$

 Define
\begin{equation*}
\begin{split}
X(t):=&\|\bar{\r}\|_{L_t^\infty(L^r)}+\|\bar{\u}\|_{L^\infty_t(
D_{A_s}^{1-\f{1}{p},p})}+\|\bar{\u}\|_{L^p_t(
W^{2,s})}+\|\partial_t\bar{\u}\|_{L^p_t(L^s)}\\
&+\|\nabla\bar{ P}\|_{L^p_t( L^s)}+\|\bar{\F}\|_{L^\infty_t(
B_{s,p}^{3(1-\f{1}{p})})}+\|\bar{\F}\|_{\mathcal{W}_{s,p}(0,t)}.
\end{split}
\end{equation*}
Thus, combining \eqref{151}-\eqref{512} and
$B_{s,p}^{3(1-\f{1}{p})}\hookrightarrow W^{1,s}(\O)\hookrightarrow L^r(\O)$, we have
\begin{equation*}
\begin{split}
X(t)&\le C\Big\{t^{1-\frac{1}{p}}\|\r_2\|_{L^\infty_t(W^{1,r})}\big(1+\|\partial_t\u_2\|_{L^p_t(L^q)}+\|\u_1\|_{L^\infty_t(D_{A_q}^{1-\frac{1}{p},p})}\|\u_1\|_{L^p_t(W^{2,q})}\big)\\
&\qquad\quad +t^{\frac{1}{2}-\frac{3}{2q}}\|\u_2\|_{L_t^\infty(D_{A_q}^{1-\frac{1}{p},p})}+t^{\frac{2}{3}-\frac{1}{q}}\|\u_1\|_{L_t^\infty(
D_{A_q}^{1-\frac{1}{p},p})}+\|\u_1\|_{L_t^p(W^{2,q})}\\
&\qquad\quad+t^{\frac{1}{3}-\frac{1}{q}}\big(\|\d_1\|_{L_t^\infty(B_{q,p}^{3(1-\frac{1}{p})})}+\|\d_2\|_{L_t^\infty(B_{q,p}^{3(1-\frac{1}{p})})}\big)+\|\d_2\|_{L_t^p(W^{2,q})}\\
&\qquad\quad+\|\d_1\|_{L_t^p(W^{3,q})}+
\|\d_2\|_{L_t^p(W^{3,q})}+\|\d_1\|_{L_t^\infty(B_{q,p}^{3(1-\frac{1}{p})})}\|\d_1\|_{L_t^p(W^{2,q})}\\
&\qquad\quad+\|\d_2\|_{L_t^\infty(B_{q,p}^{3(1-\frac{1}{p})})}\big(\|\d_1\|_{L_t^p(W^{2,q})}+\|\d_2\|_{L_t^p(W^{2,q})}\big)\Big\}X(t).
\end{split}
\end{equation*}
Now, choosing $\eta$ so small that the term between brackets is less
than $\frac{1}{2}$ for $t=\eta$ enables us to conclude that $X\equiv
0$ on $[0,\eta]$. As the constant $C$ does not depend on $\eta$, a
standard induction argument yields the uniqueness on $[0,T_0]$.

Finally, as $\r$ satisfies a transport equation with data in
$W^{1,r}(\Omega)$,\ $\u$ satisfies
$$\r\partial_t\u-\triangle\u+\nabla P \in \left(L^p(0,t; L^q(\Omega))\right)^3,$$
and $\d$ satisfies $$\partial_t\d-\triangle\d\in \left(L^p(0,t;
L^q(\Omega))\right)^3,$$ then, Proposition \ref{P1}, Theorems
\ref{T3}-\ref{T4} insure that $\r\in C([0,T_0]; W^{1,r}(\Omega))$
(if $r\neq \infty$), $\u\in C([0,T_0]; D_{A_q}^{1-\f{1}{p},p})$ and
$\d\in C([0,T_0]; B_{q,p}^{3(1-\f{1}{p})})$.

\begin{Remark}
Following the argument of uniqueness and continuity, we can also
easily prove that if $(\r_1, \u_1, P_1, \d_1)$ and $(\r_2, \u_2,
P_2, \d_2)$ are solutions to \eqref{e2}-\eqref{bc} with different
initial data $(\r_0^1, \u_0^1, \d_0^1)$ and $(\r_0^2, \u_0^2,
\d_0^2)$, then the following estimate holds true on $[0,T_0]$:
\begin{equation*}
\begin{split}
&\|\bar{\r}(t)\|_{L^r}+\|\bar{\u}(t)\|_{D_{A_s}^{1-\frac{1}{p},p}}+\|\bar{\u}\|_{L_t^p(W^{2,s})}+\|\partial_t\bar{\u}\|_{L_t^p(L^s)}\\&
+\|\bar{P}\|_{L_t^p(W^{1,s})}+\|\bar{\d}(t)\|_{B_{s,p}^{3(1-\f{1}{p})}}+\|\bar{\d}\|_{\mathcal{W}_{s,p}(0,t)}\\
&\leq
C\left(\|\bar{\r}_0\|_{L^r}+\|\bar{\u}_0\|_{D_{A_s}^{1-\frac{1}{p},p}}+\|\bar{\d}_0\|_{B_{s,p}^{3(1-\f{1}{p})}}\right),
\end{split}
\end{equation*}
where $\bar{\r}_0:=\r_0^1-\r_0^2,\  \bar{\u}_0:=\u_0^1-\u_0^2, \
\bar{\d}_0:=\d_0^1-\d_0^2$. Combining with Theorem \ref{T1}, we
conclude that for small enough $T$, the map $(\r_0, \u_0,
\d_0)\rightarrow(\r, \u, P,\d )$ is Lipschitz continuous from
bounded sets of $W^{1,r}\times D_{A_q}^{1-\frac{1}{p}, p}\times
B_{q,p}^{3(1-\f{1}{p})}$ to
\begin{equation*}
\begin{split}
&C([0,T]; L^r(\O))\times \Big(C([0,T];
D_{A_s}^{1-\frac{1}{p},p})\cap \big(W^{1,p}(0,T; L^s(\O))\big)^3\cap
\big(L^p(0,T;
W^{2,s}(\O))\big)^3\Big)\\
&\times L^p(0,T; W^{1,s}(\O))\times \Big(C([0,T];
B_{s,p}^{3(1-\f{1}{p})})\cap \big(W^{1,p}(0,T; L^s(\O))\big)^3\cap
\big(L^p(0,T; W^{2,s}(\O))\big)^3\Big).
\end{split}
\end{equation*}
\end{Remark}

\bigskip

%%%%%%%%%%%%%%%%%

\section{Global Existence}

In this section, we prove that, if  the initial data of velocity and
orientation field  is sufficiently small in appropriate norms, the local strong solution
$(\r, \u, P, \d)$ of \eqref{e2}-\eqref{bc} established in the
previous section is indeed global in time.

\subsection{Estimates for $\|\u\|_{L^2}$ and $\|\d\|_{L^2}$}

\begin{Lemma}\label{51}
Let $\O, p, q, r$ be as in Theorem \ref{T1} and let $(\r, \u, P,
\d)\in M_{T_0}^{p, q, r}$ be a solution to \eqref{e2}-\eqref{bc} on
$\O\times [0,T_0]$. Then the following inequality holds true for all
\ $t\in[0,T_0]:$
\begin{equation*}%\label{513}
\begin{split}
&\|(\sqrt{\r}\u)(t)\|_{L^2}^2+\|\nabla\d(t)\|_{L^2}^2\\&\leq
e^{-\frac{2\lambda_1}{\hat{\r}}
t}\left(\|\sqrt{\r_0}\u_0\|_{L^2}^2+\|\nabla\d_0\|_{L^2}^2\right)\left(1+
\frac{2\lambda_1}{\hat{\r}} t e^{
\frac{2\lambda_1}{\hat{\r}}t}\right),
\end{split}
\end{equation*}
where $\lambda_1$ stands for the first eigenvalue of the
Dirichlet-Laplace operator in $\Omega$.
\end{Lemma}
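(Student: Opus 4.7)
The plan is to derive the standard combined energy identity by testing the momentum equation against $\u$ and the director equation against the ``tension field'' $-(\D\d+|\nabla\d|^2\d)$, and then convert the resulting differential inequality into the desired exponentially-weighted bound via a weighted integration by parts in time.

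First, I will derive the energy identity. Testing \eqref{e22} against $\u$, using \eqref{e21}, \eqref{e24}, and $\u|_{\partial\O}=0$, gives
\[
\f12\f{d}{dt}\int_\O\r|\u|^2\,d\x + \int_\O|\nabla\u|^2\,d\x = \int_\O(\nabla\d\odot\nabla\d):\nabla\u\,d\x.
\]
Since $|\d|\equiv 1$ has already been established in Subsection~4.4, I test \eqref{e23} against $-(\D\d+|\nabla\d|^2\d)$; the two pairings involving $|\nabla\d|^2\d$ vanish because $\d\cdot\partial_t\d=0$ and $\d\cdot\nabla\d=0$, while the Neumann condition $\partial_\nu\d|_{\partial\O}=0$ together with integration by parts in the $\partial_t\d\cdot\D\d$ term yields
\[
\f12\f{d}{dt}\int_\O|\nabla\d|^2\,d\x - \int_\O(\u\cdot\nabla\d)\cdot\D\d\,d\x = -\int_\O|\D\d+|\nabla\d|^2\d|^2\,d\x.
\]
A two-step integration by parts on $-\int_\O(\u\cdot\nabla\d)\cdot\D\d\,d\x$, using $\u|_{\partial\O}=0$ and $\nabla\cdot\u=0$, shows that this coupling term equals $\int_\O(\nabla\d\odot\nabla\d):\nabla\u\,d\x$, so the two cross terms cancel when the identities are added. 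Writing $A(t):=\|\sqrt{\r}\u(t)\|_{L^2}^2$ and $B(t):=\|\nabla\d(t)\|_{L^2}^2$, the combined energy identity reads
\[
\tfrac12(A+B)'(t) + \int_\O|\nabla\u|^2\,d\x + \int_\O|\D\d+|\nabla\d|^2\d|^2\,d\x = 0,
\]
so $A+B$ is nonincreasing and in particular $B(\tau)\leq A_0+B_0$ for all $\tau\in[0,T_0]$.

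Next, I convert the identity to the desired bound. Applying the Dirichlet--Poincar\'e inequality $\|\u\|_{L^2}^2 \le \lambda_1^{-1}\|\nabla\u\|_{L^2}^2$ together with $\r\le\hat{\r}$ gives $\int_\O|\nabla\u|^2\,d\x\ge(\lambda_1/\hat{\r})A(t)$; dropping the nonnegative tension-field integral yields
\[
A'(t) + \f{2\lambda_1}{\hat{\r}}A(t) \le -B'(t).
\]
Multiplying by $e^{2\lambda_1 t/\hat\r}$, integrating over $[0,t]$, and performing integration by parts in time on the right-hand side produces
\[
e^{2\lambda_1 t/\hat\r}[A(t)+B(t)] \le A_0+B_0 + \f{2\lambda_1}{\hat\r}\int_0^t e^{2\lambda_1\tau/\hat\r}B(\tau)\,d\tau.
\]
Inserting the a priori bound $B(\tau)\le A_0+B_0$ from the previous step together with the elementary estimate $\int_0^t e^{2\lambda_1\tau/\hat\r}\,d\tau \le t\,e^{2\lambda_1 t/\hat\r}$, and rearranging, gives exactly the claimed inequality.

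The main obstacle is securing the cancellation of the two coupling terms $\int_\O(\nabla\d\odot\nabla\d):\nabla\u\,d\x$ and $-\int_\O(\u\cdot\nabla\d)\cdot\D\d\,d\x$; this is precisely the reason for testing the director equation against the full tension field rather than against $-\D\d$ alone, and it requires careful bookkeeping of the integrations by parts with both boundary conditions, the incompressibility $\nabla\cdot\u=0$, and the unit-length constraint $|\d|=1$. Once that cancellation is in hand, the remainder is a short weighted Gr\"onwall-type manipulation.
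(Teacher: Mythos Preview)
Your proof is correct and follows essentially the same route as the paper: derive the basic energy identity by testing \eqref{e22} against $\u$ and \eqref{e23} against $-(\D\d+|\nabla\d|^2\d)$, cancel the coupling terms, apply Poincar\'e, and finish with a weighted time-integration. The only cosmetic difference is at the very end: the paper phrases the last step as an application of Gr\"onwall's inequality to the integral inequality for $e^{2\lambda_1 t/\hat\r}[A(t)+B(t)]$, whereas you substitute the cruder a~priori bound $B(\tau)\le A_0+B_0$ (from the nonincreasing energy) directly into the integral together with $\int_0^t e^{2\lambda_1\tau/\hat\r}\,d\tau\le t\,e^{2\lambda_1 t/\hat\r}$; both yield the identical final inequality.
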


\begin{proof}
Due to the inhomogeneous incompressible character the flows we are
dealing with, the natural framework in which we shall work is that
of the solenoidal vector field of $L^2(\O)^3$. Note that
$$\u\in C([0,{T_0}]; D_{A_q}^{1-\f{1}{p},p})\cap\left(
L^p(0,{T_0}; W^{2,q}(\O)\cap W_0^{1,q}(\O))\right)^3,$$
$$\d\in C([0,T_0]; B_{q,p}^{3(1-\f{1}{p})})\cap \left(L^p(0,T_0;W^{3,q}(\O))\right)^3.$$  And, since $$D_{A_q}^{1-\f{1}{p},p}\hookrightarrow B^{2(1-\f{1}{p})}_{q,p}\cap
X^q,$$ (see Proposition 2.5 in \cite{D}) where $$X^q=\{{\bf z}\in
L^q(\O)^3\mid \nabla\cdot {\bf z}=0\ \ \textrm{in}\ \Omega \ \
\textrm{and} \ {\bf z}\cdot{\bf n}=0\ \ \textrm{on}\
\partial\Omega\},$$
% Sobolev's embedding $W^{1,q}(\Omega)\hookrightarrow H^1(\Omega)$ as $q>3$,
then, when $1<p<2,$ by the standard interpolation inequality
 $$L^\infty(0,T_0; L^q(\Omega))\cap L^p(0,T_0;W^{2,q}(\Omega))\subset L^2(0,T_0;H^{1+\alpha}(\Omega)),$$
 where $$\frac{1}{2}=\frac{1-\theta}{\infty}+\frac{\theta}{p}=\frac{\theta}{p},
 \quad \frac{1}{2}-\frac{1+\alpha}{3}=(1-\theta)\frac{1}{q}+\theta\left(\frac{1}{q}-\frac{2}{3}\right), $$
we have
\begin{equation}\label{388}
\u\in C([0,T_0]; H^\alpha(\O))\cap \left(L^2(0,T_0; H^{1+\a}(\O))\right)^3,
\end{equation}
\begin{equation}\label{3881}
\d\in C([0,T_0]; H^{1+\alpha}(\O))\cap \left(L^2(0,T_0; H^{2+\a}(\O))\right)^3.
\end{equation}
When $2\leq p<\infty$, \eqref{388}-\eqref{3881} hold obviously due to
$W^{2,q}(\O)\hookrightarrow H^{2}(\O) \ \textrm{as} \  q>3.$

Now, $\r$ is continuous in $(t, \x)$, $\u\in C([0,T_0];
H^\alpha(\O))\cap \left(L^2(0,T_0; H^{1+\a}(\O))\right)^3$ and  $\d\in C([0,T_0];
H^{1+\alpha}(\O))\cap \left(L^2(0,T_0; H^{2+\a}(\O))\right)^3$. This enables us to
justify the following computations.

Taking the $L^2$ scalar product %\big(i.e.,\ $(\phi,\varphi)=\sum_{i=1}^{3}\int_\Omega\phi_i\varphi_i\ dx$\big)
in \eqref{e22} with $\u$ and performing integration by parts, using
the continuity equation \eqref{e21},  we obtain
\begin{equation}\label{514}
\begin{split}
&\frac{1}{2}\f{d}{dt}\int_\O\r|\u|^2\ d\x+\int_\O|\nabla\u|^2\ d\x\\
%\\&=\int_\O|\u|^2\partial_t\r\ d\x-\int_\O\r\u\cdot(\nabla\u)\cdot\u\ d\x-\int_\O \H\cdot(\nabla\u)\H\ d\x\\
&=-\frac{1}{2}\int_\O|\u|^2\nabla\cdot(\r\u)\
d\x-\int_\O\r\u\cdot\nabla\u\cdot\u\
d\x-\int_\O\u\cdot\big((\nabla\d)^\top\triangle\d\big)\ d\x\\
&=-\int_\O\u\cdot\big((\nabla\d)^\top\triangle\d\big)\ d\x.
\end{split}
\end{equation}
 Here we have used the facts
$$\nabla\cdot(\nabla\d\odot\nabla\d)=\nabla\left(\frac{|\nabla\d|^2}{2}\right)+(\nabla\d)^\top\triangle\d,$$
and  $\nabla\cdot\u=0$ in $\Omega$, $\u=0$ on $\partial\Omega$, as
well as
$$\int_\O\nabla P\cdot\u \ d\x=\int_\O \nabla\left(\frac{|\nabla\d|^2}{2}\right)\cdot\u\ d\x=0.$$
Multiplying \eqref{e22} by $-(\triangle\d+|\nabla\d|^2\d)$ and
integrating over $\Omega$, we obtain
\begin{equation*}
-\int_\O\partial_t\d \cdot\triangle\d\ d\x-\int_\O
(\u\cdot\nabla\d)\cdot \triangle\d \ d\x=-\int_\O
|\triangle\d+|\nabla\d|^2\d|^2 d\x.
\end{equation*}
Here we have used the fact that $|\d|=1$ to get
$$\left(\partial_t\d+\u\cdot\nabla\d\right)\cdot|\nabla\d|^2\d
=\frac{1}{2}\left(|\nabla\d|^2\partial_t(|\d|^2)+\u\cdot\nabla|\d|^2|\nabla\d|^2\right)=0.$$
Since $\partial_{\bf\nu} \d=0$ on $\partial\O$, integrating by
parts, we have $$\int_\O\partial_t \d\cdot\triangle\d \
d\x=-\frac{1}{2}\frac{d}{dt}\int_\O|\nabla\d|^2\ d\x.$$ Hence we
obtain
\begin{equation}\label{1711}
\frac{1}{2}\f{d}{dt}\int_\O|\nabla\d|^2\ d\x+\int_\O
|\triangle\d+|\nabla\d|^2\d|^2 \ d\x=\int_\O (\u\cdot\nabla\d)\cdot
\triangle\d \ d\x.
\end{equation}
By adding \eqref{514} and \eqref{1711}, we eventually get the
identity:
\begin{equation}\label{167}
\frac{1}{2}\f{d}{dt}\int_\O(\r|\u|^2+|\nabla\d|^2)\ d\x+\int_\O
(|\nabla\u|^2+|\triangle\d+|\nabla\d|^2\d|^2)\ d\x=0.
\end{equation}
Since $\nabla\d\in L^2(0,T_0;H^1(\O))$ and $|\d|=1$, we have
\begin{equation}\label{1670}
\triangle\d\cdot\d+|\nabla\d|^2=0,
\end{equation}
 and then
\begin{equation*}
\begin{split}
\int_\O|\triangle\d+|\nabla\d|^2\d|^2)\ d\x
%&=\int_\O
%(|\triangle\d|^2-|\nabla\d|^4)\ d\x=\int_\O\left(|\triangle\d|^2-(\triangle\d\cdot\d)^2\right)\ d\x\\&
=\int_\O
|\triangle\d\times \d|^2\ d\x.
\end{split}
\end{equation*}
Now, by virtue of the Poincar\'{e} inequality
$\|\nabla\u\|_{L^2}^2\geq \lambda_1\|\u\|_{L^2}^2$, we get
\begin{equation*}
\frac{1}{2}\f{d}{dt}\left(\|\sqrt{\r}\u\|_{L^2}^2+\|\nabla\d\|^2_{L^2}\right)+\frac{\lambda_1}{\hat{\r}}\|\sqrt{\r}\u\|_{L^2}^2
\leq 0,
\end{equation*}
i.e.,
\begin{equation}\label{in}
\f{d}{dt}\left(e^{\frac{2\lambda_1}{\hat{\r}}t}\|\sqrt{\r}\u\|_{L^2}^2\right)+e^{\frac{2\lambda_1}{\hat{\r}}t}\f{d}{dt}\|\nabla\d\|^2_{L^2}
\leq 0.
\end{equation}
 Integrating \eqref{in} from $0$ to $t$, we obtain
\begin{equation*}
\begin{split}
&e^{\frac{2\lambda_1}{\hat{\r}}
t}\left(\|(\sqrt{\r}\u)(t)\|_{L^2}^2+\|\nabla\d(t)\|_{L^2}^2\right)\\
&\leq
\|\sqrt{\r_0}\u_0\|_{L^2}^2+\|\nabla\d_0\|_{L^2}^2+\frac{2\lambda_1}{\hat{\r}}\int_0^t
e^{\frac{2\lambda_1}{\hat{\r}} \tau}
\|\nabla\d(\tau)\|_{L^2}^2d\tau.
\end{split}
\end{equation*}
It follows from  Gr\"{o}nwall's inequality that
\begin{equation*}
e^{\frac{2\lambda_1}{\hat{\rho}}t}\|\nabla\d\|_{L^2}^2\leq
(\|\sqrt{\r_0}\u_0\|_{L^2}^2+\|\nabla\d_0\|_{L^2}^2)(1+\frac{2\lambda_1}{\hat{\rho}}te^{\frac{2\lambda_1}{\hat{\rho}}t}),
\end{equation*}
and furthermore,
\begin{equation*}
e^{\frac{2\lambda_1}{\hat{\rho}}t}(\|\sqrt{\r}\u\|_{L^2}^2+\|\nabla\d\|_{L^2}^2)\leq
(\|\sqrt{\r_0}\u_0\|_{L^2}^2+\|\nabla\d_0\|_{L^2}^2)(2+\frac{2\lambda_1}{\hat{\rho}}te^{\frac{2\lambda_1}{\hat{\rho}}t}
-e^{\frac{2\lambda_1}{\hat{\rho}}t}+\frac{2\lambda_1}{\hat{\rho}}t).
\end{equation*}
\end{proof}

Usually, \eqref{167} is called the basic energy law governing the
system \eqref{e2}-\eqref{bc}. It reflects the energy dissipation
property of the flow of liquid crystals.

\subsection{A more explicit lower bound for the existence time}

 We denote by $T^*$ the maximal existence time for
$(\r, \u, P, \d)$ which means $(\r, \u, P, \d)$ cannot be continued
beyond $T^*$ into a strong solution of \eqref{e2}-\eqref{bc}. Let us
first state a continuation criterion:

\begin{Lemma}\label{516}
Let $\r_0, \u_0 , \d_0$ be as in Theorem \ref{T1} and assume that
system \eqref{e2} with the initial-boundary conditions
\eqref{ic}-\eqref{bc} has a strong solution on a finite time interval
$[0,T^\ast)$ with
$$\r\in L^\infty(0,T^\ast; W^{1,r}(\O)),\ \inf_{t<T^\ast,\ \x\in \Omega}\r(\x,t)>0,$$
$$\u\in L^\infty(0,T^\ast; D^{1-\frac{1}{p},p}_{A_q}) \quad \textrm{and}
\quad \d\in L^\infty(0,T^\ast; B^{3(1-\frac{1}{p})}_{q, p}).$$ Then
$(\r, \u, P, \d)$ can be continued beyond $T^\ast$ into a strong
solution of \eqref{e2}-\eqref{bc}.
\end{Lemma}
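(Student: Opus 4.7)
The plan is the standard continuation-by-restart argument: we pick a time $t_0$ close to $T^\ast$, use $(\rho(t_0),\u(t_0),\d(t_0))$ as new initial data in Theorem \ref{T1}, and show that the resulting local solution extends past $T^\ast$, contradicting the maximality of $T^\ast$. What makes this work is that the local existence time in Theorem \ref{T1} depends only on $\|\u_0\|_{D_{A_q}^{1-\frac{1}{p},p}}+\|\d_0\|_{B_{q,p}^{3(1-\frac{1}{p})}}$, $\|\rho_0\|_{W^{1,r}}$, the lower bound $\check\rho$, and the structural data $(p,q,r,\Omega)$ (see the explicit formulas \eqref{TTT1}--\eqref{TTT2'}).

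First I would observe that since $(\rho,\u,P,\d)\in M^{p,q,r}_T$ for every $T<T^\ast$, Definition \ref{df1} gives, restricted to $[0,T]$, continuity in time with values in the relevant spaces: $\u\in C([0,T];D_{A_q}^{1-\frac{1}{p},p})$, $\d\in C([0,T];B_{q,p}^{3(1-\frac{1}{p})})$, and $\rho\in C([0,T];W^{1,r}(\Omega))$ (or the weaker statement for $r=\infty$). Combining this with the global-in-time $L^\infty$ hypotheses, there exist constants $M<\infty$ and $\eta>0$ such that for every $t\in[0,T^\ast)$,
\begin{equation*}
\|\u(t)\|_{D_{A_q}^{1-\frac{1}{p},p}}+\|\d(t)\|_{B_{q,p}^{3(1-\frac{1}{p})}}\le M,\quad \|\rho(t)\|_{W^{1,r}}\le M,\quad \rho(\x,t)\ge \eta.
\end{equation*}
In particular, for every $t_0\in[0,T^\ast)$, the triplet $(\rho(t_0),\u(t_0),\d(t_0))$ is admissible initial data for Theorem \ref{T1}: the divergence-free condition and the boundary conditions are preserved in time by the definition of the solution class, $|\d(t_0)|=1$ is preserved by the identity \eqref{s}, and the lower bound on $\rho$ comes from \eqref{5088}.

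Next I would apply Theorem \ref{T1}(1) with these restart data. Inspecting \eqref{TTT1}--\eqref{TTT2'}, the local existence time $T_\ast$ is a decreasing function of $U^0$ and $\varrho_0$ (and an increasing function of $\check\rho$) and depends on nothing else beyond $p,q,r,\beta,\Omega$. Consequently there exists a uniform lower bound $\tau=\tau(M,\eta,p,q,r,\Omega)>0$ on the local existence time, independent of the choice of $t_0\in[0,T^\ast)$. Picking $t_0\in (T^\ast-\tau/2,\,T^\ast)$, Theorem \ref{T1} produces a strong solution $(\tilde\rho,\tilde\u,\tilde P,\tilde\d)\in M^{p,q,r}_{[t_0,t_0+\tau]}$ with initial data $(\rho(t_0),\u(t_0),\d(t_0))$. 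By the uniqueness statement in Theorem \ref{T1} (applied on $[t_0,T^\ast)$, where both $(\rho,\u,P,\d)$ and $(\tilde\rho,\tilde\u,\tilde P,\tilde\d)$ are strong solutions with the same data), the two solutions coincide on $[t_0,T^\ast)$. Glueing them yields a strong solution on $[0,t_0+\tau]$, with $t_0+\tau>T^\ast+\tau/2$, contradicting the maximality of $T^\ast$ and proving that $(\rho,\u,P,\d)$ extends beyond $T^\ast$.

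The main obstacle is the bookkeeping in the previous paragraph: confirming that the existence time $T_\ast$ coming from the iteration scheme in Section 4 really is controlled solely by $U^0$, $\varrho_0$, $\check\rho$, and the exponents and domain, with no hidden dependence on quantities such as $\|\u\|_{L^p_T(W^{2,q})}$ or $\|\d\|_{\mathcal W_{q,p}(0,T)}$ that are not covered by our $L^\infty$-in-time hypotheses. This is exactly what the explicit formulas \eqref{TTT1}--\eqref{TTT2'} guarantee, so the argument goes through.
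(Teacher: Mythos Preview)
Your proof is correct and follows essentially the same continuation-by-restart argument as the paper: both use that the local existence time from Theorem~\ref{T1} depends only on the norms of the data in $W^{1,r}\times D_{A_q}^{1-\frac{1}{p},p}\times B_{q,p}^{3(1-\frac{1}{p})}$ and on the lower bound for $\rho$ (via \eqref{TTT1}--\eqref{TTT2'} and \eqref{10}), so one can restart at a time $t_0$ close to $T^\ast$ and extend beyond it. Your write-up is simply more detailed than the paper's, which compresses the whole argument into two sentences.
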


\begin{proof}
 Indeed, a positive lower bound $\check{T}$ for the existence
time has\ already been obtained in the proof of Theorem \ref{T1}
\big(see \eqref{TTT1}\eqref{TTT2} and \eqref{10}\big) when $(\r_0,
\u_0, \d_0)$ remains in a bounded set of $$W^{1,r}\times
D^{1-\frac{1}{p},p}_{A_q}\times B^{3(1-\frac{1}{p})}_{q, p}$$ with
in addition $\inf_{\x\in \Omega}\r_0(\x)\geq\check{\r}$ for a fixed
$\check{\r}>0$. Hence system \eqref{e2} with initial density
$\r(T^\ast-\frac{\check{T}}{2})$, initial velocity
$\u(T^\ast-\frac{\check{T}}{2})$ and initial orientation field
$\d(T^\ast-\frac{\check{T}}{2})$ has a unique strong solution on
$[0, \check{T}]$ which provides a continuation of the strong
solution beyond $T^\ast$.
\end{proof}

Combining Lemma \ref{51} and Lemma \ref{516}   enables us to get
the following result:

\begin{Proposition}\label{517}
Let $\r_0, \u_0 , \d_0$ be as in Theorem \ref{T1} and let $(\r,
\u,P, \d )$ denote the corresponding strong solution of
\eqref{e2}-\eqref{bc}. Then there exists some constant $C$ depending
on $p, q, r, \mu, \lambda,\gamma, \Omega\ \textrm{and}\ \check{\r}$,
such that, the maximal existence time $T^\ast$ for $(\r, \u, P, \d)$
satisfies
$$T^\ast\geq\frac{C}{(1+\|\r_0\|_{W^{1,r}})^\kappa (U^0)^\iota}$$
for some positive exponents $\kappa$ and $\iota$ depending only on
the regularity parameters.
\end{Proposition}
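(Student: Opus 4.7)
The proposition is a quantitative version of the local existence assertion in Theorem \ref{T1}, so my plan is to extract the explicit dependence of the local existence time on the initial data from the construction in Section 4, and then invoke the continuation criterion of Lemma \ref{516}. The energy bound of Lemma \ref{51} is not strictly needed for this lower bound itself, but it will be used together with Proposition \ref{517} in the subsequent sections to iterate from small-data local solutions to a global one.

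First I would apply Theorem \ref{T1}(1) to produce a unique strong solution $(\r,\u,P,\d)\in M^{p,q,r}_{T_0}$ on $[0,T_0]$. By construction in Section 4, $T_0\le T_\ast$, where $T_\ast$ is chosen in one of the formulas \eqref{TTT1}, \eqref{TTT2}, \eqref{TTT1'}, \eqref{TTT2'} depending on the relative sizes of $1/p$, $1/2-3/(2q)$, $1/3-1/q$, $2/3-1/q$, and $T_0$ additionally satisfies the initial-data-independent smallness condition \eqref{10}. Since the density is transported along the flow of $\u$, the pointwise bound $\r\ge\check{\r}$ is preserved throughout $[0,T_0]$, so the hypotheses of Lemma \ref{516} hold at $T_0$, giving $T^\ast\ge T_0$.

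Next I would read off the dependence of $T_0$ on $\varrho_0:=\|\r_0\|_{W^{1,r}}$ and $U^0$. In each case, the three quantities defining $T_\ast$ have the form $(c_1/U^0)^{\alpha_1}$, $c_2/\bigl((1+\varrho_0)^{\sigma}(1+U^0)^{1/\beta}\bigr)$, and $\bigl(c_3/(U^0(3+4CU^0))\bigr)^{\alpha_3}$ with universal positive exponents $\alpha_i,\sigma,1/\beta$ determined by $p,q,r$. Each of these is bounded below by $C/\bigl((1+\varrho_0)^{\kappa}(1+U^0)^{\iota}\bigr)$ for appropriate universal $\kappa,\iota$, hence so is $T_\ast$. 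Condition \eqref{10} is a polynomial inequality in $T_0$ with coefficients independent of the initial data, so it is satisfied whenever $T_0$ lies below a universal absolute threshold; taking the minimum with $T_\ast$ preserves the form. Finally, by replacing $(1+U^0)^{\iota}$ by $(U^0)^{\iota}$ at the cost of enlarging the constant (the inequality becoming trivially weaker when $U^0$ is small), I arrive at $T^\ast\ge T_0\ge C/\bigl((1+\varrho_0)^{\kappa}(U^0)^{\iota}\bigr)$.

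The only obstacle is bookkeeping: the seven cases of Section 4.2 yield slightly different pairs $(\alpha_1,\alpha_3)$ (through the values $(3pq-2q-3p)/(2pq)$, $p/(p-1)$, $3q/(q-3)$, $\max\{p,3q/(2q-3)\}$, $p$, etc.), so I would fix $\kappa$ and $\iota$ as the worst exponents arising across these cases to obtain a universal pair. This is a purely quantitative rewriting of what has already been proved in Section 4, and involves no new analytic input.
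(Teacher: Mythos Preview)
Your approach is different from the paper's. Rather than reading off the thresholds \eqref{TTT1}--\eqref{TTT2'} and \eqref{10} from the iteration in Section~4, the paper runs a fresh bootstrap directly on the solution. Setting $G(t)$ equal to the $M^{p,q,r}$-type norm of $(\u,P,\d)$ on $[0,t]$, the paper applies Theorem~\ref{T3} together with the form \eqref{T42} of Theorem~\ref{T4}, in which the low-order term $\mathcal{C}_\r(t)\|\u\|_{L^p_t(L^q)}$ is kept separate. That term is then split by the Gagliardo--Nirenberg inequality \eqref{519} into a $W^{2,q}$ piece (absorbed) and an $L^2$ piece, and the latter is controlled via Lemma~\ref{51}. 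The resulting closed inequality for $G(t)$ yields three explicit smallness conditions on $t$ in terms of $U^0$ and $\|\r_0\|_{W^{1,r}}$, after which Lemma~\ref{516} gives the conclusion. Your shortcut is legitimate and produces a lower bound of the same general shape, but it bypasses exactly the energy-interpolation step that the paper later refines in Section~5.3 (replacing the crude estimate $\int_0^t\|\nabla\u\|_{L^\infty}\,d\tau\le Ct^{3/2-1/p-3/(2q)}G(t)$ by an energy-based one) to upgrade the local bound to global existence for small data. So the paper's route is doing double duty.

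One step in your write-up is incorrect as stated. Replacing $(1+U^0)^\iota$ by $(U^0)^\iota$ ``at the cost of enlarging the constant'' goes the wrong way: for small $U^0$ the inequality $T^\ast\ge C/(U^0)^\iota$ is \emph{stronger}, not weaker, than $T^\ast\ge C/(1+U^0)^\iota$, since the right-hand side blows up as $U^0\to 0$. No choice of constant repairs this. In fact the paper's own bootstrap conditions have the same feature (the second condition, $(1+8C(1+\varrho_0)^{\delta_1}U^0)^{\delta_2}t^{1/p}(1+t^{1/2})\le 1$, also caps $t$ as $U^0\to 0$), so the bare $(U^0)^\iota$ in the statement should be understood in that light; the genuinely small-$U^0$ regime is treated separately in Section~5.3. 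Also, a minor point: $T^\ast\ge T_0$ is immediate from the definition of maximal existence time; Lemma~\ref{516} is not needed for that step.
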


\begin{proof}
Fix a $\tilde{T}<T^\ast$. We aim at proving that if $\tilde{T}\leq
C(1+\|\r_0\|_{W^{1,r}})^{-\kappa} (U^0)^{-\iota}$ for a convenient
choice of $C, \kappa\ \textrm{and}\  \iota$ then $(\r, \u, P, \d)$
may be bounded in $M_{\tilde{T}}^{p, q, r}$ by a function depending
only on the data. Then Lemma \ref{516} will entail Proposition
\ref{517}.

Define
\begin{equation*}
\begin{split}
G(t):=&\|\u \|_{L^\infty_t (D_{A_q}^{1-\f{1}{p},p})}+\|\u\|_{L^p_t(
W^{2,q})}+\|\partial_t\u\|_{L^p_t (L^q)}\\
&+\|\nabla P\|_{L^p_t(L^q)}+\|\d\|_{L^\infty_t(
B_{q,p}^{3(1-\f{1}{p})})}+\|\d\|_{\mathcal{W}_{q,p}(0,t)}.
\end{split}
\end{equation*}
According to Theorems \ref{T3}-\ref{T4} and \eqref{5000}, we have
\begin{equation}\label{518}
\begin{split}
G(t)&\leq
C\Big(\mathcal{B}_{\r}^2(t)\big(\|\u_0\|_{D_{A_q}^{1-\f{1}{p},p}}
+\|\u\cdot\nabla\u\|_{L_t^p(L^q)}+\|\nabla\cdot\big((\nabla\d)^\top\nabla\d)\|_{L_t^p(L^q)}\big)\\
&\qquad\quad+\mathcal{C}_{\r}(t)\|\u\|_{L_t^p(L^q)}
+\|\d_0\|_{B_{q,p}^{3(1-\f{1}{p})}}+\|-\u\cdot\nabla\d+|\nabla\d|^2\d\|_{L_t^p(L^q)}\Big),
\end{split}
\end{equation}
where $C=C(p, q, r, \Omega, \check{\r}, \hat{\r})$.

Combining the Gagliardo-Nirenberg-Sobolev inequality and Young's
inequality yields, for all $\varepsilon>0,$
\begin{equation}\label{519}
\|\u\|_{L^q}\leq
C\big(\varepsilon\|\u\|_{W^{2,q}}+\varepsilon^{1-\frac{1}{\theta}}\|\u\|_{L^2}\big)
\quad \textrm{with}\ \ \theta=\frac{4q}{7q-6}.
\end{equation}
We note that Lemma \ref{51} insures
$$\|\u\|_{L_t^\infty(L^2)}\leq C(1+t^\frac{1}{2})U^0,$$
then employing H\"{o}lder's inequality, we have
\begin{equation}\label{521}
\begin{split}
\|\u\|_{L_t^p(L^2)}&\leq t^\frac{1}{p}\|\u\|_{L_t^\infty(L^2)}\leq
Ct^\frac{1}{p}(1+t^\frac{1}{2})U^0.
\end{split}
\end{equation}
Moreover, We get
\begin{equation}\label{5201}
\begin{split}
\|\u\cdot\nabla\u\|_{L_t^p(L^q)}\leq
\|\u\|_{L_t^\infty(L^q)}\|\nabla\u\|_{L_t^p(L^\infty)} \leq
Ct^{\frac{1}{2}-\frac{3}{2q}}G^2(t),
\end{split}
\end{equation}
\begin{equation}\label{5202}
\begin{split}
\|\nabla\cdot\big((\nabla\d)^\top\nabla\d)\|_{L^p_t(L^q)}\le
C\|\nabla\d\|_{L_t^\infty(L^q)}\|\triangle\d\|_{L_t^p(L^\infty)} \le
 Ct^{\f{1}{3}-\f{1}{q}}G^2(t),
\end{split}
\end{equation}
\begin{equation}\label{5203}
\begin{split}
\|-\u\cdot\nabla\d+|\nabla\d|^2\d\|_{L^p_t(L^q)}&\le\|\u\|_{L_t^\infty(L^q)}\|\nabla\d\|_{L_t^p(L^\infty)}
+\|\nabla\d\|_{L_t^\infty(L^q)}\|\nabla\d\|_{L_t^p(L^\infty)}\\
&\le Ct^{\f{1}{2}-\f{3}{2q}}G^2(t).
\end{split}
\end{equation}
Here we have used the fact that $|\d|=1$. Hence plugging
\eqref{519}-\eqref{5203} in \eqref{518} while taking
$\varepsilon=\epsilon \mathcal{C}^{-1}_\r(t)$ with $\epsilon$
suitably small, we get
\begin{equation}\label{520}
\begin{split}
G(t)&\leq
C\Big(\mathcal{B}_{\r}^2(t)\big(U^0+(t^{\frac{1}{2}-\frac{3}{2q}}+t^{\frac{1}{3}-\frac{1}{q}})G^2(t)\big)
+\mathcal{C}_{\r}^{\frac{1}{\theta}}(t)t^{\frac{1}{p}}(1+t^\frac{1}{2})U^0+U^0+t^{\frac{1}{2}-\frac{3}{2q}}G^2(t)\Big).
\end{split}
\end{equation}
 On the other hand, using the same argument as for $\r^{k+1}$ in Subsection 4.2, we obtain
\begin{equation}\label{5211}
\|\nabla\r\|_{L_t^\infty(L^r)}\leq
\|\r_0\|_{W^{1,r}}e^{Ct^{\frac{3}{2}-\frac{1}{p}-\frac{3}{2q}}G(t)},
\end{equation}
\begin{equation}\label{5212}
\begin{split}
\|\r\|_{C_t^\beta(L^\infty)}&\leq
C\big(\|\r\|_{L_t^\infty(W^{1,r})}+\|\partial_t\r\|_{L_t^\infty(L^s)}\big)\\
&\leq
C\big(\|\r\|_{L_t^\infty(W^{1,r})}+\|\u\|_{L^\infty_t(L^q)}\|\nabla\r\|_{L_t^\infty(L^r)}\big)\\
&\leq C\|\nabla\r\|_{L_t^\infty(L^r)}\big(1+G(t)\big).
\end{split}
\end{equation}
Then, according to the definitions of $\mathcal{B}_\r(t)$ and
$\mathcal{C}_\r(t)$ in Theorem 3.7 in \cite{D}, using \eqref{5211}
and \eqref{5212}, we eventually get,
\begin{equation}\label{522}
\mathcal{B}_\r(t)\leq
Ce^{Ct^{\frac{3}{2}-\frac{1}{p}-\frac{3}{2q}}G(t)}(1+\|\r_0\|_{W^{1,r}})^{\frac{r}{r-3}},
\end{equation}
\begin{equation}\label{5221}
\mathcal{C}_\r(t)\leq
Ce^{Ct^{\frac{3}{2}-\frac{1}{p}-\frac{3}{2q}}G(t)}\Big(\big(1+\|\r_0\|_{W^{1,r}}\big)^{\gamma_1}
+\big(1+\|\r_0\|_{W^{1,r}}\big)^{\gamma_2}\|\r_0\|_{W^{1,r}}^{\frac{1}{\beta}}\big(1+G(t)\big)^{\frac{1}{\beta}}\Big),
\end{equation}
 where $\gamma_1$ and $\gamma_2$ depend only on $p, q, r\ \textrm{and} \
\beta$.

Plugging \eqref{522}-\eqref{5221} in \eqref{520}, for some positive
exponents $\delta_1$ and $\delta_2$, we have
\begin{equation*}
\begin{split}
G(t)\leq &
Ce^{Ct^{\frac{3}{2}-\frac{1}{p}-\frac{3}{2q}}G(t)}(1+\|\r_0\|_{W^{1,r}})^{\delta_1}\Big(U^0\big(1+t^{\frac{1}{p}}(1+t^{\frac{1}{2}})(1+G(t))^{\delta_2}\big)
\\&+(t^{\frac{1}{2}-\frac{3}{2q}}+t^{\frac{1}{3}-\frac{1}{q}})G^2(t)\Big).
\end{split}
\end{equation*}

Assume that $\tilde{T}$ has been chosen such that
\begin{equation}\label{524}
 G(\tilde{T})\leq 8C(1+\|\r_0\|_{W^{1,r}})^{\delta_1}U^0.
\end{equation}
 This
is possible because of the continuity of the function $t\mapsto
G(t)$. Noticing that $G(t)$ is increasing in $t$, then a standard
induction argument shows \eqref{524} is satisfied at time
$t\leq\tilde{T}$ with a strict inequality whenever the following
three inequalities are satisfied:
$$8C^2(1+\|\r_0\|_{W^{1,r}})^{\delta_1}U^0t^{\frac{3}{2}-\frac{1}{p}-\frac{3}{2q}}<\ln 2,$$
$$\big(1+8C(1+\|\r_0\|_{W^{1,r}})^{\delta_1}U^0\big)^{\delta_2}t^{\frac{1}{p}}(1+t^{\frac{1}{2}})\leq 1,$$
$$64C^2(1+\|\r_0\|_{W^{1,r}})^{2\delta_1}U^0(t^{\frac{1}{2}-\frac{3}{2q}}+t^{\frac{1}{3}-\frac{1}{q}})\leq 2.$$
Hence Lemma \ref{516} enables us to continue the solution $(\r, \u,
P, \d)$ beyond $\tilde{T}$.

The proof of Proposition \ref{517} is complete.
\end{proof}

\subsection{The case of a small initial velocity and orientation field}

Proposition \ref{517} insures that the existence time of a strong
solution for \eqref{e2}-\eqref{bc} goes infinity (for fixed initial
density) when $\u_0$ (resp. $\d_0$) tends to $0$ in
$D_{A_q}^{1-\f{1}{p},p}$ (resp. $B^{3(1-\frac{1}{p})}_{q, p}$). We
now aim at stating that the system has indeed a global strong
solution if $\u_0$
 and $\d_0$ are suitably small. This will give Theorem \ref{T2}.

 Let $(\r, \u, P, \d)$ be the strong solution given by Theorem \ref{T1}. For any $\zeta\geq0$, define
$$G_{0, 2, \zeta}(t):=(\|\sqrt{\r_0}\u_0\|_{L^2}+\|\nabla\d_0\|_{L^2})\left(1+(2\zeta t)^{\frac{1}{2}}e^{\zeta t}\right)\quad \textrm{and}\ \ G_{0, 2}:=G_{0, 2,0}(t).$$
By Lemma \ref{51}, for $t<T^\ast$, we have
\begin{equation}\label{525}
\|\sqrt{\r}\u\|_{L_t^p(L^2)}\leq CG_{0,
2}t^{\frac{1}{p}}\big(1+(\frac{2\lambda_1}{\hat{\r}}t)^{\frac{1}{2}}\big)
\end{equation}
and
$$\|(\sqrt{\r}\u)(t)\|_{L^2}+\|\nabla\d(t)\|_{L^2}\leq C e^{-\frac{\lambda_1}{\hat{\r}}
t}G_{0, 2,\frac{\lambda_1}{\hat{\rho}}}(t)$$.

 Hence, starting from \eqref{518}, using
\eqref{519},  \eqref{525} and the fact that
\begin{equation*}
\begin{split}
\|\u\cdot\nabla\u\|_{L_t^p(L^q)}&\leq \|\u\|_{L_t^\infty(L^q)}\|\nabla\u\|_{L_t^p(L^\infty)}\\
&\leq
C\|\u\|_{L_t^\infty(D_{A_q}^{1-\frac{1}{p},p})}\|\u\|_{L_t^p(W^{2,q})}
\leq CG^2(t),
\end{split}
\end{equation*}
\begin{equation*}
\begin{split}
\|\nabla\cdot\big((\nabla\d)^\top\nabla\d)\|_{L^p_t(L^q)}&\le
C\|\nabla\d\|_{L_t^\infty(L^q)}\|\triangle\d\|_{L_t^p(L^\infty)}\\
&\leq
C\|\d\|_{L_t^\infty(B_{q,p}^{3(1-\frac{1}{p})})}\|\d\|_{L_t^p(W^{3,q})}
\leq CG^2(t),
\end{split}
\end{equation*}
\begin{equation*}
\begin{split}
&\|-\u\cdot\nabla\d+|\nabla\d|^2\d\|_{L_t^p(L^q)}\\&\leq
\|\u\|_{L_t^\infty(L^q)}\|\nabla\d\|_{L_t^p(L^\infty)}
+\|\nabla\d\|_{L_t^\infty(L^q)}\|\nabla\d\|_{L_t^p(L^\infty)}\\
&\leq
C\Big(\|\u\|_{L_t^\infty(D_{A_q}^{1-\frac{1}{p},p})}\|\d\|_{L_t^p(W^{2,q})}+\|\d\|_{L_t^\infty(B_{q,p}^{3(1-\frac{1}{p})})}\|\d\|_{L_t^p(W^{2,q})}\Big)\leq
CG^2(t),
\end{split}
\end{equation*}
we end up with
\begin{equation*}
G(t)\leq
C\Big(\big(1+\mathcal{B}_\r^2(t)\big)\big(U^0+G^2(t)\big)+\mathcal{C}_\r^{\frac{1}{\theta}}(t)G_{0,2}t^{\frac{1}{p}}\big(1+
(\frac{2\lambda_1}{\hat{\r}} t)^{\frac{1}{2}}\big)\Big).
\end{equation*}

Once again, the bounds for $\mathcal{B}_\r(t)$ and
$\mathcal{C}_\r(t)$ will follow from \eqref{522} and \eqref{5221}.
However, in contrast with the previous section, we are going to take
advantage of Lemma \ref{51} to avoid the appearance of the factor
$t^{\frac{3}{2}-\frac{1}{p}-\frac{3}{2q}}$. Indeed, since
$\big(L^2(\Omega), W^{2,q}(\Omega)\big)_\vartheta\hookrightarrow
W^{1,\infty}(\O)$ with $\vartheta=\frac{5q}{7q-6},$ then it follows
from H\"{o}lder's inequality that
\begin{equation*}
\begin{split}
\int_0^t\|\nabla\u(\tau)\|_{L^\infty} d\tau&\leq C\int_0^t\|\u(\tau)\|_{L^2}^{1-\vartheta}\|\u(\tau)\|_{W^{2,q}}^\vartheta d\tau\\
&\leq C\int_0^t\big(e^{-\frac{\lambda_1}{\hat{\r}}\tau}G_{0, 2, \frac{\lambda_1}{\hat{\r}}}(\tau)\big)^{1-\vartheta}\|\u(\tau)\|_{W^{2,q}}^\vartheta d\tau\\
&\leq
CG_{0,2,\frac{\lambda_1}{\hat{\r}}}^{1-\vartheta}(t)G^\vartheta(t).
\end{split}
\end{equation*}
Now, bounding $\mathcal{B}_\r(t)$ and $\mathcal{C}_\r(t)$ may be
done by mimicking the proof of Proposition \ref{517} and we
eventually conclude that
\begin{equation}\label{528}
\begin{split}
G(t)\leq
Ce^{CG_{0,2,\frac{\lambda_1}{\hat{\r}}}^{1-\vartheta}(t)G^\vartheta(t)}(1+\|\r_0\|_{W^{1,r}})^{\delta_3}\Big(U^0\big(1+t^{\frac{1}{p}}(1+t^{\frac{1}{2}})(1+G(t))^{\delta_4}\big)+G^2(t)\Big)
\end{split}
\end{equation}
for some positive exponents $\delta_3$ and $\delta_4$ depending only
on $p, q, r$.

Fix a positive $\bar{T}$ and assume that
\begin{equation}\label{529}
G(t)\leq 8C(1+\|\r_0\|_{W^{1,r}})^{\delta_3}U^0, \quad t\in [0,
\bar{T}].
\end{equation}
If the data are so small that
$$CG^{1-\vartheta}_{0,2,\frac{\lambda_1}{\hat{\r}}}(\bar{T})\big(8C(1+\|\r_0\|_{W^{1,r}})^{\delta_3}U^0\big)^\vartheta\leq \ln 2,$$
then \eqref{528} implies
$$G(t)\leq 2C(1+\|\r_0\|_{W^{1,r}})^{\delta_3}\Big(U^0\big(1+t^{\frac{1}{p}}(1+t^{\frac{1}{2}})(1+G(t))^{\delta_4}\big)+G^2(t)\Big).$$
Now, if in addition
$$64C^2(1+\|\r_0\|_{W^{1,r}})^{2\delta_3}U^0\leq \frac{1}{2}\ \ \textrm{and}\ \ \bar{T}^{\frac{1}{p}}(1+\bar{T}^{\frac{1}{2}})\big(1+8C(1+\|\r_0\|_{W^{1,r}})^{\delta_3}U^0\big)^{\delta_4}\leq \frac{3}{2},$$
then \eqref{529} is satisfied with the constant $6C$ instead of
$8C$. A standard bootstrap argument enables to conclude to the
second part of Theorem \ref{T1}.

\bigskip

%%%%%%%%%%%%%%%%%%%%
\section{Weak-Strong Uniqueness}

The purpose of this section is to show $\textit{Weak-Strong
Uniqueness}$ in Theorem \ref{T2}. To this end, we need to obtain
first an energy estimate for the strong solution to system
\eqref{e2}-\eqref{bc}.
\begin{Lemma}\label{l4}
Let $p,q,r$ satisfy the same conditions as Theorem \ref{T1} and
$(\r,\u, P, \d)\in M_{T_0}^{p,q,r}$ be the unique solution to
\eqref{e2}-\eqref{bc} on $\O\times [0, T_0]$. Then for any $0<t\leq
T_0$, we have
\begin{equation}\label{l6}
\begin{split}
&\frac{1}{2}\int_\O\left(\r(t)|\u(t)|^2+|\nabla\d(t)|^2\right)
d\x+\int_0^t\int_\O (|\nabla\u|^2+|\triangle\d+|\nabla\d|^2\d|^2)\
d\x d\tau\\& =\frac{1}{2}\int_\O(\r_0|\u_0|^2+|\nabla\d_0|^2)\ d\x.
\end{split}
\end{equation}
\end{Lemma}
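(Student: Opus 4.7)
The plan is to derive the differential energy identity
\begin{equation*}
\frac{1}{2}\f{d}{dt}\int_\O(\r|\u|^2+|\nabla\d|^2)\ d\x+\int_\O (|\nabla\u|^2+|\triangle\d+|\nabla\d|^2\d|^2)\ d\x=0
\end{equation*}
(which is exactly \eqref{167} already obtained during the proof of Lemma \ref{51}) and then integrate it in time from $0$ to $t$. Thus essentially nothing new needs to be proved beyond reproducing the computation and ensuring the regularity of $(\r,\u,\d)\in M_{T_0}^{p,q,r}$ is sufficient to justify it on $[0,t]$ with $t\le T_0$.

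First I would record that, by the interpolation argument from Subsection 5.1 (in particular \eqref{388}--\eqref{3881}), the strong solution satisfies $\u\in C([0,T_0];H^\alpha)\cap L^2(0,T_0;H^{1+\alpha})$ and $\d\in C([0,T_0];H^{1+\alpha})\cap L^2(0,T_0;H^{2+\alpha})$, with $\r$ continuous in $(t,\x)$ and bounded away from $0$ and $\infty$. This guarantees that $\r\u\cdot\partial_t\u$, $(\nabla\d)^\top\triangle\d\cdot\u$, $\partial_t\d\cdot\triangle\d$ and $|\nabla\d|^2\d\cdot\triangle\d$ are integrable on $\O\times(0,t)$ and that the boundary terms in the integrations by parts vanish.

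Next I would take the $L^2$ inner product of \eqref{e22} with $\u$. Using \eqref{e21} together with $\nabla\cdot\u=0$ and $\u|_{\partial\O}=0$ eliminates the convective and pressure contributions and leaves
\begin{equation*}
\tfrac{1}{2}\tfrac{d}{dt}\int_\O\r|\u|^2\,d\x+\int_\O|\nabla\u|^2\,d\x=-\int_\O\u\cdot\big((\nabla\d)^\top\triangle\d\big)\,d\x,
\end{equation*}
via the identity $\nabla\cdot(\nabla\d\odot\nabla\d)=\nabla(|\nabla\d|^2/2)+(\nabla\d)^\top\triangle\d$. Similarly, taking the $L^2$ inner product of \eqref{e23} with $-(\triangle\d+|\nabla\d|^2\d)$, I use $|\d|=1$ (proved in Subsection 4.4) so that $(\partial_t\d+\u\cdot\nabla\d)\cdot|\nabla\d|^2\d=0$, and I use $\partial_\nu\d|_{\partial\O}=0$ to integrate by parts in $\int\partial_t\d\cdot\triangle\d\,d\x=-\tfrac{1}{2}\tfrac{d}{dt}\int|\nabla\d|^2\,d\x$. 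This yields
\begin{equation*}
\tfrac{1}{2}\tfrac{d}{dt}\int_\O|\nabla\d|^2\,d\x+\int_\O|\triangle\d+|\nabla\d|^2\d|^2\,d\x=\int_\O(\u\cdot\nabla\d)\cdot\triangle\d\,d\x.
\end{equation*}
Adding these two identities cancels the cross term and produces \eqref{167}.

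Finally I would integrate \eqref{167} on $[0,t]$. Since $\sqrt{\r}\u\in C([0,T_0];L^2)$ and $\nabla\d\in C([0,T_0];L^2)$ (both following from the Bochner-continuity clauses of the $M^{p,q,r}_{T_0}$ framework), the left endpoint evaluates correctly to the initial energy $\tfrac{1}{2}\int_\O(\r_0|\u_0|^2+|\nabla\d_0|^2)\,d\x$, delivering \eqref{l6}. The only delicate point is the justification that the formal testing is rigorous rather than formal, and this is handled by the regularity listed above which places each product in $L^1(\O\times(0,t))$; everything else is a routine integration by parts.
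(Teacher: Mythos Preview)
Your proposal is correct and follows exactly the paper's approach: the paper's proof consists of the single line ``Integrating \eqref{167} over the time interval $[0,t]$, we obtain the energy equality \eqref{l6},'' and you have simply unpacked that by reproducing the derivation of \eqref{167} from the proof of Lemma~\ref{51} together with the regularity justifications already recorded there. There is no substantive difference.
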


\begin{proof}
Integrating \eqref{167} over the time interval $[0,t]$, we obtain
the energy equality \eqref{l6}.
\end{proof}

\vspace{3mm}

%The following proposition gives a kind of uniqueness result. It is similar to the result which Serrin used in the Navier-Stokes equations (\cite{JS1,JS2}), also which Lin-Liu used in the Leslie system of variable length (\cite{LL}).
%\begin{Proposition}\label{ws}
%Let $(\tilde{\u},\tilde{\d})$ be another solution that satisfies the following energy inequality:
%\begin{equation}\label{260}
%\begin{split}
%&\int_\O(|\tilde{\u}(t)|^2+|\nabla\tilde{\d}(t)|^2) d\x+2\int_0^t\int_\O
%(|\nabla\tilde{\u}|^2+|\triangle\tilde{\d}+|\nabla\tilde{\d}|^2\tilde{\d}|^2) d\x ds\\&
%\leq\int_\O(|\u_0|^2+|\nabla\d_0|^2)d\x.
%\end{split}
%\end{equation}
%Then $(\u, \d)\equiv(\tilde{\u},\tilde{\d})$.
%\end{Proposition}

%Now, suppose  $(\tilde{\u},\tilde{\d},\Pi)$
%such that $$\tilde{\u}\in L^{2,\infty}(\O\times [0,T_0])\cap W_2^{1,0}(\O_{T_0}),$$
%$$\tilde{\d}\in L^\infty ([0,T_0],H^1(\O))\cap L^2([0,T_0],H^2(\O)),$$ $$\nabla\Pi\in L^2([0, T_0],L^1(\O))$$ is a weak solution to \eqref{e2}-\eqref{bc}, we have (cf. \cite{LLW} Section 4), for (almost) all $t\in(0,T_0)$,
%\begin{equation}\label{26}
%\begin{split}
%&\int_\O(|\tilde{\u}(t)|^2+|\nabla\tilde{\d}(t)|^2) d\x+2\int_0^t\int_\O
%(|\nabla\tilde{\u}|^2+|\triangle\tilde{\d}+|\nabla\tilde{\d}|^2\tilde{\d}|^2) d\x ds\\&
%\leq\int_\O(|\u_0|^2+|\nabla\d_0|^2)d\x.
%\end{split}
%\end{equation}
Now, we proceed to prove weak-strong uniqueness. Let
$(\tilde{\r},\tilde{\u},\Pi,\tilde{\d})$ be a global (in time) weak
solution. On   one hand, as the density $\tilde{\r}$ satisfies
\begin{equation*}
\begin{cases}
\partial_t\tilde{\r}+\tilde{\u}\cdot\nabla\tilde{\r}=0\\
\tilde{\r}|_{t=0}=\r_0\in W^{1,r}(\O)
\end{cases}
\end{equation*}
with $\tilde{\u}\in\big(L^2_{\textrm{loc}}(\R^+;H_0^1(\O))\big)^3$,
Theorem 1 in \cite{DB} insures that $\tilde{\r}\in C(\R^+; W^{1,r^-}(\O))$ for all $r^-< r$.
On the other hand, we remark that, in view of the regularity of the
strong solution $(\r, \u, P, \d)$, we deduce from the weak
formulation that
\begin{equation}\label{w11}
\begin{split}
&\int_\O\tilde{\r}\tilde{\u}\cdot\u\ d\x+\int_0^t\int_\O\nabla\tilde{\u}:\nabla\u\ d\x d\tau\\
&=\int_\O\r_0|\u_0|^2\
d\x+\int_0^t\int_\O\tilde{\r}\tilde{\u}\cdot(\partial_\tau\u+\tilde{\u}\cdot\nabla\u)\
d\x
d\tau-\int_0^t\int_\O(\nabla\tilde{\d})^\top\triangle\tilde{\d}\cdot\u\
d\x d\tau
\end{split}
\end{equation}
and
\begin{equation}\label{w21}
\begin{split}
&\int_\O\nabla\tilde{\d}:\nabla\d\ d\x-\int_\O|\nabla\d_0|^2\ d\x\\
&=\int_0^t\!\!\int_\O\big(-\tilde{\d}\cdot
\triangle\d_\tau+\tilde{\u}\cdot\nabla\tilde{\d}\cdot
\triangle\d-\triangle\tilde{\d}\cdot\triangle\d-|\nabla\tilde{\d}|^2\tilde{\d}\cdot
\triangle\d\big)\ d\x d\tau
\end{split}
\end{equation}
for  almost all $t\in(0,T_0)$.

If we write
\begin{equation}\label{w13}
\begin{split}
&\tilde{\r}\partial_t\u+\tilde{\r}\tilde{\u}\cdot\nabla\u-\triangle\u+\nabla P\\
&=(\tilde{\r}-\r)(\partial_t\u+\u\cdot\nabla\u)+\tilde{\r}(\tilde{\u}-\u)\cdot\nabla\u-\nabla\left(\frac{|\nabla\d|^2}{2}\right)-(\nabla\d)^\top\triangle\d,
\end{split}
\end{equation}
then multiply \eqref{w13} by $\tilde{\u}$ and integrate over $\O\times (0,t)$ to find
\begin{equation}\label{w14}
\begin{split}
&\int_0^t\int_\O(\tilde{\r}\partial_\tau\u+\tilde{\r}\tilde{\u}\cdot\nabla\u)\cdot\tilde{\u}\ d\x d\tau+\int_0^t\int_\O\nabla\u:\nabla\tilde{\u}\ d\x d\tau\\
&=\int_0^t\int_\O\big((\tilde{\r}-\r)(\partial_\tau\u+\u\cdot\nabla\u)\cdot\tilde{\u}+\tilde{\r}(\tilde{\u}-\u)\cdot\nabla\u\cdot\tilde{\u}-(\nabla\d)^\top\triangle\d\cdot\tilde{\u}\big)
\ d\x d\tau,
\end{split}
\end{equation}
and meanwhile, replace $\partial_\tau \d$ by \eqref{e23} in
\eqref{w21} to get
\begin{equation}\label{w22}
\begin{split}
&\int_\O\nabla\tilde{\d}:\nabla\d\ d\x-\int_\O|\nabla\d_0|^2\ d\x\\
&=\int_0^t\!\!\int_\O\big(-2\triangle\tilde{\d}\cdot
\triangle\d+\u\cdot\nabla\d\cdot\triangle\tilde{\d}+\tilde{\u}\cdot\nabla\tilde{\d}\cdot\triangle\d-|\nabla\d|^2\d\cdot
\triangle\tilde{\d}-|\nabla\tilde{\d}|^2\tilde{\d}\cdot\triangle\d\big)\
d\x d\tau.
\end{split}
\end{equation}
Combining \eqref{w11}, \eqref{w14} and \eqref{w22}, we get for
almost all $t\in (0,T_0)$,
\begin{equation}\label{w15}
\begin{split}
&\int_\O(\tilde{\r}\tilde{\u}\cdot\u+\nabla\tilde{\d}:\nabla\d)\ d\x+2\int_0^t\int_\O(\nabla\tilde{\u}:\nabla\u+\triangle\tilde{\d}\cdot\triangle\d)\ d\x d\tau\\
&=\int_\O(\r_0|\u_0|^2+|\nabla\d_0|^2)\
d\x-\int_0^t\int_\O(\nabla\tilde{\d})^\top\triangle\tilde{\d}\cdot\u\
d\x
d\tau\\
&\quad+\int_0^t\int_\O\big((\tilde{\r}-\r)(\partial_\tau\u+\u\cdot\nabla\u)\cdot\tilde{\u}+\tilde{\r}(\tilde{\u}-\u)\cdot\nabla\u\cdot\tilde{\u}-(\nabla\d)^\top\triangle\d\cdot\tilde{\u}\big) \ d\x d\tau\\
&\quad
+\int_0^t\int_\O(\u\cdot\nabla\d\cdot\triangle\tilde{\d}+\tilde{\u}\cdot\nabla\tilde{\d}\cdot\triangle\d-|\nabla\d|^2\d\cdot
\triangle\tilde{\d}-|\nabla\tilde{\d}|^2\tilde{\d}\cdot\triangle\d)\
d\x d\tau.
\end{split}
\end{equation}
%We multiply \eqref{w13} by $\u$ and integrate over $\O\times (0,t)$ to find
%\begin{equation}\label{w16}
%\begin{split}
%&\frac{1}{2}\int_\O\tilde{\r}|\u|^2\ d\x+\int_0^t\int_\O|\nabla\u|^2\ d\x d\tau\\
%&=\frac{1}{2}\int_\O\r_0|\u_0|^2\ d\x\\
%&\quad+\int_0^t\int_\O\big((\tilde{\r}-\r)(\partial_\tau\u+\u\cdot\nabla\u)\cdot\u+\tilde{\r}(\tilde{\u}-\u)\cdot\nabla\u\cdot\u-(\nabla\d)^\top\triangle\d\cdot\u\big)
%\ d\x d\tau.
%\end{split}
%\end{equation}
%Here we have used the fact that
%$\partial_t\tilde{\r}=-\tilde{\u}\cdot\nabla\tilde{\r}$. And, we
%multiply \eqref{w23} by $-\triangle\d$ and integrate over $\O\times
%(0,t)$ to find
%\begin{equation}\label{w24}
%\begin{split}
%&\frac{1}{2}\int_\O|\nabla\d|^2\
%d\x-\frac{1}{2}\int_\O|\nabla\d_0|^2\
%d\x-\int_0^t\int_\O(\u\cdot\nabla\d)\cdot\triangle\d\ d\x d\tau\\
%&=-\int_0^t\int_\O|\triangle\d|^2 \ d\x
%d\tau-\int_0^t\int_\O|\nabla\d|^2\d\cdot\triangle\d \ d\x d\tau.
%\end{split}
%\end{equation}
%Combining \eqref{w16} and \eqref{w24},
From \eqref{w13} and \eqref{e23}, using the same argument as to get
the energy estimate \eqref{l6},
 we get for almost all $t\in
(0,T_0)$,
\begin{equation}\label{w17}
\begin{split}
&\frac{1}{2}\int_\O(\tilde{\r}|\u|^2+|\nabla\d|^2)\ d\x+\int_0^t\int_\O(|\nabla\u|^2+|\triangle\d+|\nabla\d|^2\d|^2)\ d\x d\tau\\
&=\frac{1}{2}\int_\O(\r_0|\u_0|^2+|\nabla\d_0|^2)
d\x+\int_0^t\int_\O\big((\tilde{\r}-\r)(\partial_\tau\u+\u\cdot\nabla\u)\cdot\u+\tilde{\r}(\tilde{\u}-\u)\cdot\nabla\u\cdot\u\big)
d\x d\tau.
\end{split}
\end{equation}
Here we have used the fact that
$\partial_t\tilde{\r}=-\tilde{\u}\cdot\nabla\tilde{\r}$.

 Then,  adding \eqref{26} and \eqref{w17} and substracting
\eqref{w15}, together with the fact that \eqref{1670} and
$(\tilde{\r},\tilde{\u},\Pi,\tilde{\d})$ is a weak solution,
%\begin{equation}\label{w2}
%\begin{split}
%&\frac{1}{2}\int_{\O}|\nabla\tilde{\d}|^2\
%d\x-\frac{1}{2}\int_{\O}|\nabla\d_0|^2\
%d\x-\int_0^t\!\!\int_\O(\tilde{\u}\cdot\nabla\tilde{\d})\cdot\triangle\tilde{\d}\
%d\x ds\\&=-\int_0^t\!\!\int_\O|\triangle\tilde{\d}|^2\ d\x
%ds-\int_0^t\!\!\int_\O
%|\nabla\tilde{\d}|^2\tilde{\d}\cdot\triangle\tilde{\d}\ d\x ds.
%\end{split}
%\end{equation}
%\begin{equation}\label{w3}
%\begin{split}
%&\frac{1}{2}\int_\O|\nabla\tilde{\d}|^2
%d\x-\frac{1}{2}\int_\O|\nabla\d_0|^2 d\x-\int_0^t\!\!\int_\O
%(\tilde{\u}\cdot\nabla\tilde{\d})\cdot \triangle\tilde{\d} \ d\x ds\\
%&=-\int_0^t\!\!\int_\O
%|\triangle\tilde{\d}+|\nabla\tilde{\d}|^2\tilde{\d}|^2 \ d\x ds.
%\end{split}
%\end{equation}
 we obtain
\begin{equation*}
\begin{split}
&\frac{1}{2}\int_\O(\tilde{\r}|\u-\tilde{\u}|^2+|\nabla\d-\nabla\tilde{\d}|^2)\ d\x+\int_0^t\int_\O(|\nabla\u-\nabla\tilde{\u}|^2+|\triangle\d-\triangle\tilde{\d}|^2)\ d\x d\tau\\
&\leq\int_0^t\int_\O\big((\tilde{\r}-\r)(\partial_\tau\u+\u\cdot\nabla\u)\cdot(\u-\tilde{\u})-\tilde{\r}(\u-\tilde{\u})\cdot\nabla\u\cdot(\u-\tilde{\u})\big) \ d\x d\tau\\
&\quad-\int_0^t\!\!\int_\O\big((\nabla\d-\nabla\tilde{\d})\cdot\triangle\d\cdot(\u-\tilde{\u})-\u\cdot(\nabla\d-\nabla\tilde{\d})\cdot(\triangle\d-\triangle\tilde{\d})\\
&\qquad\qquad\quad
+(|\nabla\d|^2\d-|\nabla\tilde{\d}|^2\tilde{\d})\cdot(\triangle\d-\triangle\tilde{\d})\big)\
d\x d\tau.
\end{split}
\end{equation*}
Hence, for almost all $t\in (0,T_0)$ and for all $\varepsilon>0$, we
have
\begin{equation}\label{w181}
\begin{split}
&\frac{1}{2}\int_\O(\tilde{\r}|\u-\tilde{\u}|^2+|\nabla\d-\nabla\tilde{\d}|^2)\ d\x+\int_0^t\int_\O(|\nabla\u-\nabla\tilde{\u}|^2+|\triangle\d-\triangle\tilde{\d}|^2)\ d\x d\tau\\
&\leq
\int_0^t\left(C_\varepsilon\|\partial_\tau\u+\u\cdot\nabla\u\|_{L^3}^2\|\r-\tilde{\r}\|_{L^2}^2
+\varepsilon\|\u-\tilde{\u}\|_{L^6}^2\right)\
d\tau+\varepsilon\int_0^t\int_\O
|\triangle\d-\triangle\tilde{\d}|^2\ d\x
d\tau\\
&\quad+C_\varepsilon\int_0^t\big(\|\nabla\d\|_{L^\infty}^4\int_\O|\d-\tilde{\d}|^2\
d\x\big)\
d\tau+\int_0^t\big(\|\nabla\u\|_{L^\infty}\int_\O\tilde{\r}|\u-\tilde{\u}|^2\
d\x\big)\
d\tau\\
&\quad+\int_0^t\left(C_\varepsilon(\|\nabla\d+\nabla\tilde{\d}\|_{L^\infty}^2+\|\u\|_{L^\infty}^2+\|\triangle\d\|^2_{L^3})\int_\O|\nabla\d-\nabla\tilde{\d}|^2\
d\x\right)d\tau.
\end{split}
\end{equation}
Here we have used H\"{o}lder's inequality and Cauchy's inequality with $\varepsilon$.

Now, we wish to estimate $\|\r-\tilde{\r}\|_{L^2}$ and
$\|\d-\tilde{\d}\|_{L^2}$ . We write
\begin{equation}\label{w191}
\partial_t(\r-\tilde{\r})+\nabla(\r-\tilde{\r})\cdot\tilde{\u}=(\tilde{\u}-\u)\cdot\nabla\r,
\end{equation}
and
\begin{equation}\label{w1911}
\partial_t(\d-\tilde{\d})+\u\cdot\nabla(\d-\tilde{\d})+(\u-\tilde{\u})\cdot\nabla\tilde{\d}=\triangle\d-\triangle\tilde{\d}+|\nabla\d|^2\d-|\nabla\tilde{\d}|^2\tilde{\d}.
\end{equation}
Multiply \eqref{w191}\big(resp. \eqref{w1911}\big) by
$\r-\tilde{\r}$ \big(resp. $\d-\tilde{\d}$\big) and integrate over
$\O\times (0,t)$, we have
\begin{equation*}
\frac{1}{2}\int_\O|\r-\tilde{\r}|^2\
d\x=\int_0^t\int_\O(\r-\tilde{\r})(\tilde{\u}-\u)\cdot\nabla\r\ d\x
d\tau,
\end{equation*}
and
\begin{equation*}
\begin{split}
&\frac{1}{2}\int_\O|\d-\tilde{\d}|^2\ d\x\\
&=-\int_0^t\int_{\O}(\u-\tilde{\u})\cdot\nabla\tilde{\d}\cdot(\d-\tilde{\d})\
d\x d\tau-\int_0^t\int_{\O}|\nabla\d-\nabla\tilde{\d}|^2\ d\x
d\tau\\
&\quad+\int_0^t\int_{\O}|\nabla\d|^2|\d-\tilde{\d}|^2\ d\x d\tau
+\int_0^t\int_{\O}(\nabla\d+\nabla\tilde{\d}):(\nabla\d-\nabla\tilde{\d})\
\tilde{\d}\cdot(\d-\tilde{\d})\ d\x d\tau.
\end{split}
\end{equation*}
 Employing the same argument as \eqref{w181}, we get
\begin{equation}\label{w1921}
\begin{split}
&\frac{1}{2}\int_\O|\r-\tilde{\r}|^2\ d\x\leq
\int_0^t(C_\varepsilon\|\nabla\r\|_{L^3}^2\|\r-\tilde{\r}\|_{L^2}^2+\varepsilon
\|\u-\tilde{\u}\|_{L^6}^2)\ d\tau,
\end{split}
\end{equation}
\begin{equation}\label{w1922}
\begin{split}
&\frac{1}{2}\int_\O|\d-\tilde{\d}|^2\ d\x\\
&\leq
\int_0^t\left(C_\varepsilon\|\nabla\tilde{\d}\|_{L^3}^2+\|\nabla\d\|_{L^\infty}^2+\frac{1}{2}\right)\int_{\O}|\d-\tilde{\d}|^2\
d\x d\tau+\varepsilon\int_0^t\|\u-\tilde{\u}\|_{L^6}^2\
d\tau\\
&\quad+\int_0^t\left(1+\frac{\|\nabla\d+\nabla\tilde{\d}\|^2_{L^\infty}}{2}\right)\int_{\O}|\nabla\d-\nabla\tilde{\d}|^2\
d\x d\tau.
\end{split}
\end{equation}
 Using Sobolev's inequality $\|\u-\tilde{\u}\|_{L^6}\leq
C\|\nabla\u-\nabla\tilde{\u}\|_{L^2}$, we eventually get, from
\eqref{w181}, \eqref{w1921} and \eqref{w1922}, for almost all $t\in
(0,T_0)$,
\begin{equation*}
\begin{split}
&\int_\O(\tilde{\r}|\u-\tilde{\u}|^2+|\nabla\d-\nabla\tilde{\d}|^2+|\r-\tilde{\r}|^2+|\d-\tilde{\d}|^2)\ d\x\\
&+\int_0^t\int_\O(|\nabla\u-\nabla\tilde{\u}|^2+|\triangle\d-\triangle\tilde{\d}|^2)\ d\x d\tau\\
&\leq\int_0^t\int_\O\Big(C_\varepsilon(\tau)\big(|\r-\tilde{\r}|^2+|\nabla\d-\nabla\tilde{\d}|^2+|\d-\tilde{\d}|^2\big)\\
&\qquad\qquad\quad+C(\tau)\big(\tilde{\r}|\u-\tilde{\u}|^2+|\nabla\d-\nabla\tilde{\d}|^2+|\d-\tilde{\d}|^2\big)\Big)\
d\x d\tau,
\end{split}
\end{equation*}
where $C_\varepsilon(\cdot),C(\cdot)$ denote various non-negative
measurable functions in $L^1(0,T_0)$ occurred  when we   applied the parabolic type estimates
for quasi-linear equations  (cf. \cite{OAL},
Chapter VI, Section 2) to \eqref{e23} to obtain  $\tilde{\d}(\cdot,
t)\in C^{1, \alpha}$  for some
$\alpha>0$, with the $C^{1, \alpha}$ norm   independent of $t$. We
hence conclude that $\u=\tilde{\u},\ \d=\tilde{\d}$ and
$\r=\tilde{\r}$ a.e. in $\O\times (0,T_0)$, by applying
Gr\"{o}nwall's inequality.

The proof of Theorem \ref{T2} is complete.

\bigskip\bigskip

\section*{Acknowledgments}
D. Wang's research was supported in part by the National Science
Foundation under Grant  DMS-0906160, and by the
Office of Naval Research under Grant N00014-07-1-0668.

\bigskip\bigskip


\begin{thebibliography}{99}

\bibitem{HA} H. Amann,
\emph{Linear and quasilinear parabolic problems.} Vol. I. Abstract
linear theory. Birkh\"{a}user Boston, Inc., Boston, 1995.

\bibitem{BL} J. Bergh, J.  L\"{o}fstr\"{o}m,
\emph{Interpolation spaces. An introduction.} Grundlehren der
Mathematischen Wissenschaften, Springer-Verlag, Berlin-New York,
1976.

\bibitem{CKN}
L. Caffarelli, R. Kohn, L. Nirenberg,  \emph{Partial regularity of suitable weak solutions of Navier-Stokes equations.} Comm. Pure Appl. Math. 35 (1982), 771-831.


\bibitem{Chan}
S. Chandrasekhar,  \emph{Liquid crystals.} 2nd ed., Cambridge
University Press, 1992.

\bibitem{CGR} B. Climent-Ezquerra, F. Guill\'{e}n-Gonz\'{a}lez and
M. Rojas-Medar, \emph{Reproductivity for a nematic liquid crystal
model.} Z. angew. Math. Phys. (2006), 984-998.

\bibitem{CS1}
D. Coutand,  and S. Shkoller,
{\em Well-posedness of the full Ericksen-Leslie model of nematic liquid crystals}.
C. R. Acad. Sci. Paris S\'er. I Math. 333 (2001), no. 10, 919-924,

\bibitem{D} R. Danchin,
\emph{Density-dependent incompressible fluids in bounded domains.}
J. Math. Fluid Mech. 8 (2006), 333-381.

\bibitem{DB} B. Desjardins,
\emph{Linear transport equations with initial values in Sobolev
spaces and application to the Navier-Stokes equations}. Differential
and Integral Equations 10 (1997), 577-586.

\bibitem{DLWW} S. Ding, J. Lin, C. Wang and H. Wen,
\emph{Compressible hydrodynamic flow of liquid crystals in 1-D.}
To appear in  Discrete Contin. Dyn. Syst. Ser. A.

\bibitem{DWW} S. Ding, C. Wang and H. Wen,
\emph{Weak solution to compressible hydrodynamic flow of liquid
crystals in dimension one.}
 Discrete Contin. Dyn. Syst. Ser. B 15 (2011), no. 2, 357-371,

\bibitem{JLE} J. L. Ericksen,
\emph{Hydrostatic theory of liquid crystal.}
Arch. Rational Mech. Anal. 9 (1962), 371-378.

\bibitem{EB} B. Desjardins,
 \emph{Regularity of weak solutions of the compressible isentropic Navier-Stokes equations.} Comm. Partial
Differential Equations 22 (1997), 977-1008.

\bibitem{Eri}
J. L. Ericksen,  \emph{Conservation laws for liquid crystals.}
Trans. Soc. Rheology 5 (1961), 23-34.

\bibitem{Eri2}
J. L. Ericksen, \emph{Continuum theory of nematic liquid crystals.}  Res. Mechanica 21 (1987), 381-392.

\bibitem{Galdi}
G. P.  Galdi,  \emph{An introduction to the mathematical theory of the Navier-Stokes equations.Vol.
I. Linearized steady problems.} Springer-Verlag, New York, 1994.

%\bibitem{GR}
%F. Guill\'{e}n-Gonz\'{a}lez, M. Rojas-Medar,  \emph{Global solution of nematic liquid crystals models.} C. R. Acad. Sci. Paris, Ser. I 335 (2002), 1085-1090.

\bibitem{HK}
R. Hardt, D. Kinderlehrer, \emph{Mathematical Questions of Liquid
Crystal Theory.}  The IMA Volumes in Mathematics and its
Applications 5, Springer-Verlag, New York, 1987.

\bibitem{HKL}
R. Hardt, D. Kinderlehrer,  F. Lin,  \emph{Existence and partial
regularity of static liquid crystal configurations.} Comm. Math.
Phys. 105 (1986), 547-570.

\bibitem{HW6}
X. Hu and D. Wang,
{\em Global solution to the three-dimensional incompressible flow of liquid crystals}.
Comm. Math. Phys. {296} (2010), 861-880.

\bibitem{HWW1}
T. Huang, C.-Y. Wang, and H. Wen,
{\em Strong solutions of the compressible nematic liquid crystal flow.}
To appear in J. Differential Equations.

\bibitem{HWW2}
T. Huang, C.-Y. Wang, and H. Wen,
{\em Blow up criterion for compressible nematic liquid crystal flows in dimension three.}
 To appear in Arch. Rational Mech. Anal.

\bibitem{JT}
F. Jiang, Z. Tan, \emph{Global weak solution to the flow of liquid
crystals system.} Math. Meth. Appl. Sci. 32 (2009), 2243-2266.

\bibitem{OAL}
O. A. Ladyzhenskaya, N. A. Solonnikov, N. N. Uraltseva,  \emph{Linear and quasilinear equations of parabolic type.} Transl. Math. Monographs, Vol. 23, American Mathematical Society, 1968.

 \bibitem{Leslie1}
F. M. Leslie,  \emph{Some constitutive equations for liquid
crystals.} Arch. Rational Mech. Anal. 28 (1968), 265-283.

% \bibitem{Leslie2}
%F. M. Leslie, \emph{Theory of flow phenomenum in liquid crystals.}
% in ÒThe Theory of Liquid Crystals,Ó  4, Academic Press (1979), 1-81.
\bibitem{LXWD}
X. Li, D. Wang,  \emph{Global solution to the incompressible flow of liquid crystals.}
J. Differential Equations 252 (2012), 745-767.

 \bibitem{Lin2}
F.-H. Lin,  \emph{Nonlinear theory of defects in nematic liquid
crystals; phase transition and flow phenomena.}  Comm. Pure. Appl.
Math. 42 (1989),  789-814.

%\bibitem{Lin1}
%F. H. Lin, \emph{Mathematics theory of liquid crystals.} In: Applied
%Mathematics at the Turn of the Century,  Lecture Notes of the 1993
%Summer School, Universidat Complutense de Madrid, Madrid, 1995.

\bibitem{LL} F.-H.  Lin, C. Liu,
\emph{Nonparabolic dissipative systems modeling the flow of liquid
crystals.} Comm. Pure Appl. Math. 48 (1995), 501-537.

\bibitem{LL2} F.-H.  Lin, C. Liu,
\emph{Partial regularity of the dynamic system modeling the flow
of liquid crystals.} Discrete Contin. Dynam. Systems 2 (1996),
1-22.

\bibitem{LL3} F.-H.  Lin, C. Liu,
\emph{Existence of solutions for the Ericksen-Leslie system.} Arch.
Rational Mech. Anal. 154 (2000), 135-156.

\bibitem{LL4}
F.-H.  Lin, C. Liu,
\emph{Static and dynamic theories of liquid crystals.}
J. of Partial Differential Equations 14 (2001),  289-330.

\bibitem{LLW} F.-H.  Lin, J. Lin and C.-Y.  Wang,
\emph{Liquid crystal flows in two dimensions.}  Arch. Ration. Mech. Anal. 197 (2010), 297-336.

\bibitem{L} P. L. Lions,
\emph{Mathematical topics in fluid mechanics.} Vol. 1.
Incompressible models. Oxford Lecture Series in Mathematics and its
Applications, 3. Oxford Science Publications. The Clarendon Press,
Oxford University Press, New York, 1996.

%\bibitem{LC} C. Liu, \emph{Dynamic theory for incompressible smectic-A liquid crystals: existence and regularity.}
%Discrete Contin. Dynam. Systems 6 (2000), 591-608.

%\bibitem{LS} C. Liu, J. Shen, \emph{On liquid crystal flows with free-slip boundary conditions.} Discrete and Continuous Dynamical Systems 7 (2001), 307-318.

\bibitem{LW}
C. Liu, N. J. Walkington, \emph{Approximation of liquid crystal
flow.} SIAM J.  Numer.  Anal. 37 (2000), 725-741.

\bibitem{LLa}
X.  Liu, L. Liu,
{\em A blow-up criterion for the compressible liquid crystals system}. Preprint, 2011.

\bibitem{LLH}
X.  Liu, L. Liu, Y.  Hao,
{\em Existence of strong solutions for the compressible Ericksen-Leslie model}. Preprint, 2011.

\bibitem{LLQ}
X.  Liu, J. Qing,
{\em Globally weak solutions to the  flow of compressible liquid crystals system}. Preprint, 2011.

\bibitem{QH}
Y. Qin, L. Huang,
{\em Global existence and regularity of a  1d liquid crystal system}. Preprint, 2011.


%\bibitem{JS1} J. Serrin, \emph{On the interior regularity of weak solutions of the Navier-Stokes equations.}  Arch. Rational Mech. Anal. 9 (1962), 187-195.

%\bibitem{JS2} J. Serrin, \emph{The initial value problem for the Navier-Stokes equations.}  1963  Nonlinear Problems. (Proc. Sympos., Madison, Wis.  pp. 69-98 Univ. of Wisconsin Press, Madison, Wis.)

%\bibitem{VAS} V. A. Solonnikov,
%\emph{$L_p$-estimates for solutions to the initial boundary-value
%problem for the generalized Stokes system in a bounded domain.}
%Journal of Mathematical Sciences. 105 (2001), 2448-2484.


\bibitem{SL} H. Sun, C.  Liu,
\emph{On energetic variational approaches in modeling the nematic
liquid crystal flows.} Discrete Contin. Dyn. Syst. 23 (2009),
455-475.

\bibitem{WY1}
D. Wang, C. Yu,
{\em Global weak solution and large-time behavior for the compressible flow of liquid crystals.}
To appear in Arch. Rational Mech. Anal.


\end{thebibliography}
\end{document}